\documentclass{article}
\usepackage[margin= 1in]{geometry}
\usepackage{times}
\usepackage{graphicx} 
\usepackage{subfigure} 
\usepackage{epsfig}
\usepackage{natbib}
\usepackage{color}

\usepackage{algorithm}
\usepackage{algorithmic}

\usepackage{hyperref}





\usepackage{amsmath}
\usepackage{amsthm}
\usepackage{amssymb}
\usepackage{amsfonts}
\usepackage{mathtools,xparse}
\usepackage{epstopdf}
\usepackage{multirow}


\newcommand{\mb}[1]{\mathbf{#1}}
\newcommand{\mc}[1]{\mathcal{#1}}
\newcommand{\mr}[1]{\mathbb{#1}}

\DeclarePairedDelimiter{\norm}{\lVert}{\rVert}
\NewDocumentCommand{\normL}{ s O{} m }{%
  \IfBooleanTF{#1}{\norm*{#3}}{\norm[#2]{#3}}_{L_2(\Omega)}%
}
\def\k{{(k)}}
\newtheorem{Lemma}{Lemma}

\newtheorem{Corollary}{Corollary}
\newtheorem{Theorem}{Theorem}
\title{Linear Convergence of Stochastic Frank Wolfe Variants}
\author{Donald Goldfarb, Garud Iyengar, and Chaoxu Zhou}
\date{October 12, 2016}
\begin{document}
\maketitle
\begin{abstract}
\noindent In this paper, we show that the Away-step Stochastic Frank-Wolfe Algorithm (ASFW) and Pairwise Stochastic Frank-Wolfe algorithm (PSFW) converge linearly in expectation. We also show that if an algorithm convergences linearly in expectation then it converges linearly almost surely.  In order to prove these results, we develop a novel proof technique based on concepts of empirical processes and concentration inequalities. Such a technique has rarely been used to derive the convergence rates of stochastic optimization algorithms. In large-scale numerical experiments, ASFW and PSFW perform as well as or better than their stochastic competitors in actual CPU time.
\end{abstract}
\section{Introduction}
\textbf{Motivation.} The recent trend of using a large number of parameters to model large datasets in machine learning and statistics has created a strong demand for optimization algorithms that have low computational cost per iteration and exploit model structure. Regularized empirical risk minimization (ERM) is an important class of problems in this area that can be formulated as smooth constrained optimization problems. A popular approach for solving such ERM problems is the proximal gradient method which solves a projection sub-problem in each iteration. The major drawback of this method is that the projection step can be expensive in many situations. As an alternative, the Frank-Wolfe (FW) algorithm \citep{FW56}, also known as the conditional gradient method, solves a linear optimization sub-problem in each iteration, which is much faster than the standard projection technique when the feasible set is a simple polytope \citep{N15}. When the number of observations in ERM is large, calculating the gradient in every FW iteration becomes a computationally intensive task. The question of whether `cheap' stochastic gradients can be used as a surrogate in FW immediately arises.\\
\noindent\textbf{Contribution.} In this paper, we show that the Away-step Stochastic Frank-Wolfe (ASFW) algorithm converges linearly in expectation and each sample path of the algorithm converges linearly. We also show that if an algorithm converges linearly in expectation then it converges linearly almost surely. To the best of our knowledge, this is the first paper that proves these results. The major technical difficulty of analyzing the ASFW algorithm is the lack of tools that combine stochastic arguments and combinatorial arguments. In order to solve this problem and prove our convergence results, a novel proof technique based on concepts in empirical processes theory and concentration inequalities is developed.  This technique is then applied to prove the linear convergence in expectation and almost sure convergence of each sample path of another Frank-Wolfe variant, the Pairwise Stochastic Frank-Wolfe (PSFW) algorithm. We note that this technique may be useful for analyzing the convergence of other stochastic algorithms. In our large-scale numerical experiments, the proposed algorithms outperform their competitors in all different settings.  \\
\noindent\textbf{Related Work.} The Frank-Wolfe algorithm was proposed sixty years ago  \citep{FW56} for minimizing a convex function over a polytope and is known to converge at an $O(1/k)$ rate. In \cite{LP66} the same convergence rate was proved for compact convex constraints. When both objective function and the constraint set are strongly convex, \cite{GH15} proved that the Frank-Wolfe algorithm has an  $O(1/k^2)$ rate of convergence with a properly chosen step size.  Motivated by removing the influence of ``bad" visited vertices, the away-steps variant of the Frank-Wolfe algorithm was proposed in \cite{Wol70}. Later, \cite{GM86} showed that this variant converges linearly under the assumption that the objective function is strongly convex and the optimum lies in the interior of the constraint  polytope. Recently, \cite{GH13} and \cite{LJJ13} extended the linear convergence result by removing the assumption of the location of the optimum and \cite{BS15} extended it further by relaxing the strongly convex objective function assumption. Stochastic Frank-Wolfe algorithms have been considered by \cite{LAN13} and \cite{LWM15} in which an $O(1/k)$ rate of convergence in expectation is proved. \cite{LH16} considered the Stochastic Varianced-Reduced Frank-Wolfe method (SVRF) which also has convergence rate $O(1/k)$ in expectation. In addition, the Frank-Wolfe algorithm has been applied to solve several different classes of problems, including non-linear SVM \citep{OG10}, structural SVM \citep{LJJSP13} \citep{OALDLJ16}, and comprehensive principal component pursuit \citep{MZWG15} among many others. To compare FW variants and other useful algorithms such as the Prox-SVRG of \cite{LZ14} and the stochastic variance reduced FW algorithm of \cite{LH16}, we summarize the theoretical performance in Table 1 which includes the required conditions for convergence and the given complexity bounds, the number of exact and stochastic gradient oracle calls, the number of linear optimization oracle (LO) calls and the number of projection calls in order to obtain an $\epsilon$-approximate solution.

\begin{table}
\begin{center}
\begin{tabular}{|c|c|c|c|c|c|}
\hline
 \textbf{Algorithm} & \textbf{Extra conditions} & \textbf{Exact gradients} & \textbf{Stochastic Gradients} & \textbf{LO} & \textbf{Projection}\\
 \hline
 FW & bounded constraint &$O(1 /\epsilon)$ & NA & $O(1  /\epsilon)$ & NA \\
 \hline
 Away-step       & polytope constraint & &  &  & \\
 FW    &strongly convex objective & $O(\log(1 / \epsilon))$&NA &$O(\log(1 / \epsilon))$ & NA \\
 \hline
 Pairwise& polytope constraint & &  &  & \\
 FW & strongly convex objective &$O(\log(1 / \epsilon))$ & NA & $O(\log(1 / \epsilon))$ & NA \\
 \hline
 SVRF & bounded constraint &$O(\log(1 / \epsilon))$ & $O(1/\epsilon^2)$ & $O(1 / \epsilon)$ & NA \\
 \hline
 Prox-SVRG &strongly convex objective & $\log(1 / \epsilon)$ & $O(m\log(1 / \epsilon))$ &NA & $O(m\log(1 / \epsilon))$ \\
 \hline
  ASFW & polytope constraint   &   &$O(1 / \epsilon^{4\eta}),$       & & \\
   & strongly convex objective &NA  &$ 0 < \eta < 1$    &$O(\log 1 / \epsilon)$ & NA\\
 \hline
 PSFW & polytope constraint   &    &$O(1 / \epsilon^{(6\vert V \vert ! + 2)\zeta}),$ & & \\
  &strongly convex objective  &NA  & $ 0 < \zeta < 1$ &$O(\log 1 / \epsilon)$ & NA\\
 \hline
\end{tabular}
\caption{Comparisons of algorithms in terms of their requirements and the theoretical performances to get an $\epsilon$-approximate solution. LO denotes for linear optimizations and. In Prox-SVRG, $m$ is the number of iterations in each epoch. In PSFW, $\vert V \vert$ is the number of vertices of the polytope constraint.}
\end{center}
\end{table}

\noindent\textbf{Problem description.} Consider the minimization problem
\begin{align}
\min_{\mb{x} \in \mc{P}} \Big\{F(\mb{x}) \equiv \frac{1}{n}\sum_{i = 1}^n f_i(\mb{x}) \Big\}, \label{general_problem} \tag{P1}
\end{align}
where $\mathcal{P}$ is a polytope, i.e., a non-empty compact polyhedron given by $\mathcal{P} =  \{\mb{x} \in \mathbb{R}^p : \mathbf{Cx} \leq \mb{d}\}$ for some $\mathbf{C} \in \mathbb{R}^{m \times p}$, $\mathbf{d} \in \mathbb{R}^m$. Therefore, the set of vertices $V$ of the polytope $\mc{P}$  has finitely many elements. Let $D = \sup\{\norm{\mb{x} - \mb{y}} \ \vert \ \mb{x}, \mb{y} \in \mc{P}\}$ be the diameter of $\mc{P}$. For every $i = 1, \ldots, n$, $f_i : \mr{R} \rightarrow \mr{R}$ is a strongly convex function with parameter $\sigma_i$ with an $L_i$ Lipschitz continuous gradient. From another point of view, \ref{general_problem} can be reformulated as a stochastic optimization problem as below
\begin{align}
\min_{\mb{x} \in \mc{P}} \Big\{\frac{1}{n}\sum_{i = 1}^n f_i(\mb{x}) \equiv \mr{E} f(\xi, \mb{x}) \Big\} \label{stochastic_problem} \tag{SP1}
\end{align}
where $\xi$ is a random variable that follows a discrete uniform distribution on $\{1, \ldots, n\}$, $f(i, \mb{x}) = f_i(\mb{x})$ for every $i = 1, \ldots, n$ and $\mb{x} \in \mc{P}$. Furthermore, define $\nabla f(\xi, \mb{x}) = \nabla f_\xi(\mb{x})$.\\
\noindent\textbf{The Frank-Wolfe Algorithm and its variants.} In contrast to the projected gradient algorithm, the Frank-Wolfe algorithm (also known as conditional gradient algorithm) calls a linear optimization oracle instead of a projection oracle in every iteration. 
\begin{algorithm}[H]
\caption{The Frank-Wolfe Algorithm}
\label{frank_wolfe_algo}
\begin{algorithmic}
\STATE {\bfseries Input:}  $\mathbf{x}^{(1)} \in \mc{P}$, $F(\cdot)$
\FOR{$k = 1, 2, \ldots$}
\STATE Set $\mb{p}^\k = \arg\min_{\mb{s} \in \mc{P}} \langle \nabla F(\mb{x}^\k), \mb{s}\rangle$.
\STATE Set $\mb{d}^\k = \mb{p}^\k - \mb{x}^\k$.
\STATE Set $\mb{x}^{(k+1)} = \mb{x}^\k + \gamma^\k\mb{d}^\k$, where $\gamma^\k = \frac{2}{k+2}$ or obtain by line-search.
\ENDFOR
\STATE{\bfseries Return:} $\mathbf{x}^{(k+1)}$.
\end{algorithmic}
\end{algorithm}
\noindent The Frank-Wolfe Algorithm has become popular recently because it performs a sparse update at each step. For a good review of what was known about the FW algorithm until a few years ago, see \cite{JAGGI13}. It is well-known that this algorithm converges sub-linearly with rate $O(1 / k)$ because of the so-called zig-zagging phenomenon \citep{LJJ15}. Especially when the optimal solution $\mb{x}^*$ does not lie in the relative interior of $\mc{P}$, the FW algorithm tends to zig-zag amongst the vertices that define the facet containing $\mb{x}^*$. One way to overcome this zig-zagging problem is to keep tracking of the "active`` vertices (the vertices discovered previously in the FW algorithm) and move away from the ``worst'' of these in some iterations.

\noindent The Away-step Frank-Wolfe algorithm (AFW) and the Pairwise Frank-Wolfe algorithm (PFW) are two notable variants based on this idea. After computing the vertex $\mb{p}^\k = \arg\min_{\mb{x} \in \mc{P}} \langle \nabla F(\mb{x}^\k), \mb{x} \rangle$ by the linear optimization oracle and the vertex $\mb{u}^\k = \arg\max_{\mb{x} \in U^\k} \langle \nabla F(\mb{x}^\k), \mb{x} \rangle $ where $U^\k$ is the set of active vertices at iteration $k$, the AFW algorithm moves away from the one that maximizes the potential increase in $F(\mb{x})$ i.e. the increase in the linearized function, while the PFW algorithm tries to take advantages of both vertices and moves in the direction $\mb{p}^\k - \mb{u}^\k$. Details of the algorithms can be found in \cite{LJJ15}.

\section{Variants of Stochastic Frank-Wolfe Algorithm}
When the exact gradients is expensive to compute and an unbiased stochastic gradient is easy to obtain, it may be advantageous to use a stochastic gradient in AFW and PFW. We describe the Away-step Stochastic Frank-Wolfe Algorithm (ASFW) and the Pairwise Stochastic Frank-Wolfe Algorithm(PSFW) below.
\begin{algorithm}[H]
\caption{Away-step Stochastic Frank-Wolfe algorithm}
\label{cond_grad_1}
\begin{algorithmic}[1]
\STATE {\bfseries Input:}  $\mathbf{x}^{(1)} \in V$, $f_i$ and $L_i$
\STATE Set $\mu^{(1)}_{\mathbf{x}^{(1)}} = 1$, $\mu_\mathbf{v}^{(1)} = 0$ for any $\mathbf{v} \in {V} / \{\mathbf{x}^{(1)}\}$ and $U^{(1)} = \{\mathbf{x}^{(1)}\}$.
\FOR{$k = 1, 2, \ldots$}
\STATE Sample $\xi_1, \ldots, \xi_{m^\k} \overset{\text{i.i.d.}}{\sim} \xi$ and set $\mb{g}^\k = \frac{1}{m^\k}\sum_{i=1}^{m^\k}\nabla_\mb{x}f(\xi_i, \mb{x}^\k)$, $L^\k = \frac{1}{m^\k}\sum_{i=1}^{m^\k}L_{\xi_i}$.
\STATE Compute $\mathbf{p}^{(k)} \in \arg\min_{\mb{x} \in \mc{P}}\langle \mathbf{g}^{(k)}, \mb{x} \rangle$.
\STATE Compute $\mathbf{u}^{(k)} \in {\arg\!\max}_{\mathbf{v} \in U^{(k)}}\langle \mathbf{g}^{(k)}, \mathbf{v} \rangle$.
\IF{$\langle \mathbf{g}^{(k)}, \mathbf{p}^{(k)}  + \mathbf{u}^{(k)}- 2\mathbf{x}^{(k)} \rangle \leq 0$}
\STATE Set $\mathbf{d}^{(k)} = \mathbf{p}^{(k)} - \mathbf{x}^{(k)}$ and $\gamma^{(k)}_{\text{max}} = 1$. 
\ELSE
\STATE Set $\mathbf{d}^{(k)} = \mathbf{x}^{(k)} - \mathbf{u}^{(k)}$ and $\gamma^{(k)}_{\text{max}} = \frac{\mu^{(k)}_{\mathbf{u}^{(k)}}}{1 - \mu^{(k)}_{\mathbf{u}^{(k)}}}$.
\ENDIF
\STATE Set $\gamma^{(k)} = \min\{-\frac{\langle \mb{g}^{(k)}, \mb{d}^{(k)}\rangle}{L^\k\norm{\mb{d}^{(k)}}^2}, \gamma^{(k)}_\text{max}\}$ or determine it by line-search.
\STATE Set $\mathbf{x}^{(k+1)} = \mathbf{x}^{(k)} + \gamma^{(k)}\mathbf{d}^{(k)}$.
\STATE Update $U^{(k + 1)}$ and $\mathbf{\mu}^{(k+1)}$ by Procedure VRU.
\ENDFOR
\STATE{\bfseries Return:} $\mathbf{x}^{(k+1)}$.
\end{algorithmic}
\end{algorithm}
\begin{algorithm}[H]
\caption{Pairwise Stochastic Frank-Wolfe algorithm}
\label{cond_grad_pair}
\begin{algorithmic}[1]
\STATE Replace line 7 to 11 in Algorithm \ref{cond_grad_1} by: $\mb{d}^\k = \mb{p}^\k - \mb{u}^\k$ and $\gamma^\k_{\max} = \mu^\k_{\mb{u}^\k}$.
\end{algorithmic}
\end{algorithm}
\noindent The following algorithm updates a vertex representation of the current iterate and is called in Algorithms \ref{cond_grad_1} and \ref{cond_grad_pair}.
\begin{center}
\begin{algorithm}[H]
\caption{Procedure Vertex Representation Update (VRU)}
\begin{algorithmic}[1]
\STATE {\bfseries Input:} $\mb{x}^\k$, $(U^\k, \boldsymbol\mu^\k)$, $\mb{d}^\k$, $\gamma^\k$, $\mb{p}^\k$ and $\mb{v}^\k$.
\IF{$\mb{d}^\k = \mb{x}^\k - \mb{u}^\k$}
\STATE Update $\mu^\k_\mb{v} = \mu_\mb{v}^\k(1 + \gamma^\k)$ for any $\mb{v} \in U^\k / \{\mb{u}^\k\}$.
\STATE Update $\mu^{(k+1)}_{\mb{u}^\k} = \mu^\k_{\mb{u}^\k}(1 + \gamma^\k) - \gamma^\k$.
\IF{$\mu^{(k+1)}_{\mb{u}^\k} = 0$}
\STATE Update $U^{(k+1)} = U^\k /\{\mb{u}^\k\}$
\ELSE
\STATE Update $U^{(k+1)} = U^\k$
\ENDIF
\ENDIF
\STATE Update $\mu_\mb{v}^{(k+1)} = \mu^\k_\mb{v}(1 - \gamma^\k)$ for any $\mb{v} \in U^\k / \{\mb{p}^\k\}$.
\STATE Update $\mu^{(k+1)}_{\mb{p}^\k} = \mu^\k_{\mb{p}^\k}(1 - \gamma^\k) + \gamma^\k$.
\IF{$\mu_{\mb{p}^\k}^{(k+1)} = 1$}
\STATE Update $U^{(k+1)} = \{\mb{p}^\k\}$.
\ELSE
\STATE Update $U^{(k+1)} = U^\k \cup \{\mb{p}^\k\}$.
\ENDIF
\STATE (Optional) Carath\'eodory's theorem can be applied for the vertex representation of $\mb{x}^{(k+1)}$ so that $\vert U^{(k+1)}\vert = p+1$ and $\boldsymbol\mu^{(k+1)} \in \mr{R}^{p+1}$.
\STATE {\bfseries Return:} $(U^{(k+1)}, \boldsymbol\mu^{(k+1)})$
\end{algorithmic}
\end{algorithm}
\end{center}
\section{Convergence Proof}
In this section, we will first introduce some lemmas and notation and then prove the main theorems in this paper. Note that, at the $k$-th iteration of the algorithms, $m^\k$ i.i.d. samples of $\xi$ are obtained. Define $ F^\k(\mb{x}) = \frac{1}{m^\k}\sum_{i=1}^{m^\k} f_{\xi_i}(\mb{x})$. It is easy to see that $F^\k$ is Lipschitz continuous with Lipschitz constant $L^\k = \frac{1}{m^\k}\sum_{i=1}^{m^\k}L_{\xi_i}$ and strongly convex with constant $\sigma^\k = \frac{1}{m^\k}\sum_{i=1}^{m^\k}\sigma_{\xi_i}$. The following ancillary problem is used in our analysis.
\begin{align}
\min_{\mb{x} \in \mc{P}} F^\k(\mb{x}), \label{H_problem} \tag{H1}
\end{align}
Let $\mb{x}^\k_*$ denote the optimal solution of problem \ref{H_problem}, i.e., $\mb{x}^\k_* = \arg\!\min_{\mb{x} \in \mc{P}} F^\k(\mb{x})$. The lemma below plays an important role in our proof. We refer to \cite{BS15} for a detailed proof of this lemma.
\begin{Lemma}\label{linear_approx_lower_bound}
For any $\mathbf{x} \in \mathcal{P} / \{\mb{x}^\k_*\}$ that can be represented as $\mathbf{x} = \sum_{\mathbf{v} \in U^\k}\mu_{\mathbf{v}}\mathbf{v}$ for some $U^\k \subset V$ where $\sum_{\mb{v} \in U^\k} \mu_{\mb{v}} = 1$ and $\mu_{\mb{v}} > 0$ for every $\mb{v} \in U^\k$, it holds that, 
\begin{align*}
\max_{\mathbf{u} \in U, \mathbf{p} \in V}\langle \nabla F^\k(\mathbf{x}), \mathbf{u - p} \rangle \geq \frac{\Omega_{\mathcal{P}}}{\vert U \vert} \frac{\langle \nabla F^\k(\mathbf{x}), \mathbf{x} - \mathbf{x}^\k_* \rangle}{\norm{\mathbf{x} - \mb{x}^\k_*}}.
\end{align*}
where $\vert U^\k \vert$ denotes the cardinality of $U^\k$, $V$ is the set of extreme points of $\mc{P}$ and
\begin{align*}
 \Omega_\mathcal{P} = \frac{\zeta}{\phi}
\end{align*}
for
\begin{align*}
\zeta &= \min_{\mathbf{v} \in V, i \in \{1, \ldots, m\}: a_i > \mathbf{C}_i \mathbf{v}} (d_i - \mathbf{C}_i\mathbf{v}), \\
\phi &= \max_{i \in \{1, \ldots, m\} / I(V)}\norm{\mathbf{C}_i}.
\end{align*}
\end{Lemma}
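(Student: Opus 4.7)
My plan is to reduce the stated inequality to a purely geometric decomposition fact about $\mc{P}$ and then obtain the gradient bound by a short inner-product calculation. Throughout, let $g := \nabla F^\k(\mb{x})$, write the given representation $\mb{x}=\sum_{\mb{v}\in U^\k}\mu_\mb{v}\mb{v}$, and fix any convex representation $\mb{x}^\k_* = \sum_{\mb{w}\in V}\lambda_\mb{w}\mb{w}$ of the target point. A natural non-negative decomposition is then
$$\mb{x}-\mb{x}^\k_* \;=\; \sum_{\mb{v}\in U^\k,\,\mb{w}\in V}\mu_\mb{v}\lambda_\mb{w}(\mb{v}-\mb{w}),$$
which exhibits $\mb{x}-\mb{x}^\k_*$ as a conic combination of vertex-pair differences $\mb{u}-\mb{p}$ with $\mb{u}\in U^\k$ and $\mb{p}\in V$. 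Once we have \emph{any} such decomposition $\mb{x}-\mb{x}^\k_* = \sum_i\alpha_i(\mb{u}_i-\mb{p}_i)$ with $\alpha_i\ge 0$, pairing with $g$ and bounding each term by the maximum gives
$$\langle g,\mb{x}-\mb{x}^\k_*\rangle \;=\; \sum_i\alpha_i\,\langle g,\mb{u}_i-\mb{p}_i\rangle \;\le\; \Big(\sum_i\alpha_i\Big)\max_{\mb{u}\in U^\k,\mb{p}\in V}\langle g,\mb{u}-\mb{p}\rangle,$$
so if I can establish $\sum_i\alpha_i \le \abs{U^\k}\,\norm{\mb{x}-\mb{x}^\k_*}/\Omega_\mc{P}$, the claimed inequality follows by dividing through and rearranging.

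The technical heart of the argument is therefore the geometric claim that $\mb{x}-\mb{x}^\k_*$ admits a decomposition whose total coefficient mass scales proportionally to the Euclidean length $\norm{\mb{x}-\mb{x}^\k_*}$, with proportionality constant $\abs{U^\k}/\Omega_\mc{P}$. This is where the polytope constants $\zeta$ and $\phi$ enter: $\zeta$ is a uniform lower bound on the positive slacks $d_i-\mb{C}_i\mb{v}$ and $\phi$ is an upper bound on the relevant row norms of $\mb{C}$, so $\Omega_\mc{P}=\zeta/\phi$ quantifies the minimum ``facet width'' of $\mc{P}$. Following Beck and Shtern, I would make this rigorous by LP duality on $\min_{\mb{p}\in\mc{P}}\langle g,\mb{p}\rangle$ together with the constraint representation $\mb{C}\mb{x}\le\mb{d}$: the dual exhibits an explicit non-negative multiplier vector supported on non-degenerate facets, and the pair $(\zeta,\phi)$ controls its $\ell_1$-norm by $1/\Omega_\mc{P}$ per unit displacement. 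Combining this with an analogous control of the away-gap (which uses the vertex support $U^\k$ and contributes the combinatorial factor $\abs{U^\k}$) converts the dual multipliers back into the required primal decomposition of $\mb{x}-\mb{x}^\k_*$ in pair differences with the desired total mass.

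The main obstacle is precisely this width estimate. The bilinear decomposition above has total mass exactly one, which is too large when $\norm{\mb{x}-\mb{x}^\k_*}$ is small: by itself it only yields a distance-independent bound and does not reproduce the $\norm{\mb{x}-\mb{x}^\k_*}$-scaling needed on the right-hand side. The whole game is therefore to ``cancel'' redundant pairs and shrink the mass until it is proportional to the actual displacement, and this cancellation is the step that genuinely uses the facet structure of $\mc{P}$. I expect the careful LP-duality / facet-peeling argument that produces the $\abs{U^\k}\norm{\mb{x}-\mb{x}^\k_*}/\Omega_\mc{P}$ bound to be the only delicate step; once that geometric lemma is in place, the remainder is just the routine algebraic manipulation described above.
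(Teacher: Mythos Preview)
The paper does not give its own proof of this lemma; it simply cites Beck and Shtern (2015) for the details. Your proposal is precisely a sketch of that cited argument---the conic decomposition of $\mb{x}-\mb{x}^\k_*$ into vertex-pair differences, together with the facet-geometry/LP-duality step that controls the total mass by $\abs{U^\k}\norm{\mb{x}-\mb{x}^\k_*}/\Omega_\mc{P}$---so your approach coincides with what the paper invokes.
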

\noindent Next, we introduce some definitions and lemmas that are common in the empirical processes literature but rarely seen in the optimization literature.\\

\noindent\textbf{Definition} [Bracketing Number] Let $\mc{F}$ be a class of functions. Given two functions $l$ and $u$, the bracket $[l, u]$ is the set of all function $f$ with $l \leq f \leq u$. An $\epsilon$-bracket in $L_1$ is a bracket $[l, u]$ with $\mr{E}\vert u - l \vert < \epsilon$. The bracketing number $N_{[]}(\epsilon, \mc{F}, L_1)$ is the minimum number of $\epsilon$-brackets needed to cover $\mc{F}$. (The bracketing functions $l$ and $u$ must have finite $L_1$-norms but need not belong to $\mc{F}$).\\

\noindent The bracketing number is a quantity that measures the complexity of a function class. The lemma below provides an upper bound for a function class indexed by a finite dimensional bounded set. This result can be found in any empirical processes textbook such as \cite{VW96}. For completeness, we provide a proof.

\begin{Lemma}\label{brac_num_bound}
Let $\mc{F} = \{f_\theta \ \vert \ \theta \in \Theta \}$ be a collection of measurable functions indexed by a bounded subset $\Theta \subset \mathbb{R}^p$. Denote $D_\Theta = \sup\{\norm{\theta_1 - \theta_2} \ \vert \ \theta_1, \theta_2 \in \Theta\}$. Suppose that there exists a measurable function $g$ such that 
\begin{align}
\vert f_{\theta_1}(\xi) - f_{\theta_2}(\xi) \vert \leq g(\xi)\norm{\theta_1 - \theta_2} \label{brack_lip_cond}
\end{align} 
for every $\theta_1, \theta_2 \in \Theta$. If  $ \norm{g(\xi)}_1 \equiv \int \vert g(\xi)\vert dP < \infty$, then the bracketing numbers satisfy
\begin{align*}
N_{[]}(\epsilon\norm{g}_1, \mc{F}, L_1) \leq (\frac{\sqrt{p}D_\Theta}{\epsilon})^p
\end{align*}
for every $0 < \epsilon < D_\Theta$.
\end{Lemma}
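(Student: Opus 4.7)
The plan is to convert a Euclidean grid cover of the index set $\Theta$ into a bracketing cover of $\mathcal{F}$, using (\ref{brack_lip_cond}) to bound the oscillation of $f_\theta$ over a cell of Euclidean diameter $\delta$ by $\delta\, g(\xi)$ pointwise. The $L_1$-size of the resulting bracket is then exactly $\delta\|g\|_1$, so an $\epsilon$-cover of $\Theta$ immediately yields an $\epsilon\|g\|_1$-bracketing cover of $\mathcal{F}$, and the bulk of the work collapses to counting cells.

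Concretely, I would embed $\Theta$ in a closed hypercube of side $D_\Theta$ in $\mathbb{R}^p$ (possible because $\Theta$ has Euclidean diameter $D_\Theta$) and subdivide this hypercube into a regular grid of sub-cubes of side length $\epsilon/\sqrt{p}$. Each sub-cube then has Euclidean diameter exactly $\epsilon$, so every $\theta\in\Theta$ lies within distance $\epsilon/2$ of the center of the sub-cube containing it. The number of sub-cubes that meet $\Theta$ is at most $\lceil D_\Theta\sqrt{p}/\epsilon\rceil^p$, which for $0<\epsilon<D_\Theta$ I would bound by $(D_\Theta\sqrt{p}/\epsilon)^p$, matching the cardinality stated in the lemma (absorbing the integer rounding into the constant is the only mild piece of bookkeeping).

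Let $\theta_1,\ldots,\theta_N$ enumerate the centers of these sub-cubes. For each $j$ I would form the bracket $[l_j,u_j] := [f_{\theta_j} - (\epsilon/2)\,g,\; f_{\theta_j} + (\epsilon/2)\,g]$. Assumption (\ref{brack_lip_cond}) together with $\|\theta-\theta_j\|\leq \epsilon/2$ yields $|f_\theta(\xi) - f_{\theta_j}(\xi)|\leq (\epsilon/2)\, g(\xi)$ pointwise in $\xi$, and hence $l_j\leq f_\theta\leq u_j$, so these $N$ brackets cover $\mathcal{F}$. Each has $L_1$-size $\mathbb{E}\,|u_j - l_j| = \epsilon\,\|g\|_1$, producing an $\epsilon\|g\|_1$-bracketing cover of $\mathcal{F}$ of cardinality at most $(D_\Theta\sqrt{p}/\epsilon)^p$, which is the stated bound. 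No step poses a real obstacle; this is the textbook box-counting argument glued to the Lipschitz bracket construction, and the only subtlety worth flagging is keeping track of the $\sqrt{p}$ factor arising from the Euclidean diameter of a cube of side $\epsilon/\sqrt{p}$.
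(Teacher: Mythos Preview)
Your proposal is correct and follows essentially the same approach as the paper: construct brackets $[f_{\theta_j}-\tfrac{\epsilon}{2}g,\,f_{\theta_j}+\tfrac{\epsilon}{2}g]$ centered at the points of a grid of meshwidth $\epsilon/\sqrt{p}$ over $\Theta$, use the Lipschitz condition (\ref{brack_lip_cond}) to show these brackets cover $\mc{F}$, and count grid points to get $(\sqrt{p}D_\Theta/\epsilon)^p$. Your write-up is in fact slightly more explicit than the paper's (you spell out the hypercube embedding and flag the ceiling issue, which the paper glosses over just as you do).
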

\begin{proof}
To prove the result, we use brackets of the type $[f_\theta - \epsilon g / 2, f_\theta + \epsilon g / 2]$ for $\theta$ that ranging over a suitably chosen subset of $\Theta$ and these brackets have $L_1$-size $\epsilon\norm{g}_1$. If $\norm{\theta_1 - \theta_2} \leq \epsilon / 2$, then by the Lipschitz condition (\ref{brack_lip_cond}) we have $f_{\theta_1} - \epsilon g / 2 \leq f_{\theta_2} \leq f_{\theta_1} + \epsilon g / 2$.  Therefore, the brackets cover $\mc{F}$ if $\theta$ ranges over a grid of meshwidth $\epsilon / \sqrt{p}$ over $\Theta$. This grid has at most $(\sqrt{p} D_\Theta / \epsilon)^p$ grid points. Therefore the bracketing number $N_{[]}(\epsilon\norm{g}_1, \mc{F}, L_1)$ can be bounded by $(\sqrt{p} D_\Theta / \epsilon)^p$.
\end{proof}

\noindent\textbf{Remark:} The bracketing number has a very close relationship with the covering number, which is a better known quantity in machine learning. Let $N(\epsilon, \mc{F}, L_1)$ be the covering number of the set $\mc{F}$; that is, the minimal number of balls of $L_1$-radius $\epsilon$ needs to cover the set $\mc{F}$. Then the relation, $N(\epsilon, \mc{F}, L_1) \leq N_{[]}(2\epsilon, \mc{F}, L_1)$,  between covering number and bracketing number always holds. Moreover, this concept is also closely related to the VC-dimension. Usually, constructing and counting the number of brackets for a class of functions is easier to do than computing the minimum number of balls that covers the class.

\noindent Based on the bounds on the bracketing number for a function class with bounded index set, we can provide a concentration bound for $\sup_{\mb{x} \in \mc{P}} \vert F^\k(\mb{x}) - F(\mb{x}) \vert$.
\begin{Lemma}\label{concentration_bound}
For any $\delta > 0$ and $0 < \epsilon < \min\{D, \delta / (2L_F)\}$ we have
\begin{align*}
\mr{P}\{\sup_{\mb{x} \in \mc{P}} \vert F^\k(\mb{x}) - F(\mb{x}) \vert \geq \delta \} \leq 2 K_\mc{P}(\frac{D}{\epsilon})^p \exp\{-\frac{m^\k(\delta - 2L_F \epsilon)^2}{2(u_F - l_F)^2}\},
\end{align*}
where $L_F \equiv \min\{L_1, \ldots L_n\}$, $K_\mc{P} = (\sqrt{p})^p$, $u_F = \max\{\sup_{\mb{x} \in \mc{P}}f_i(\mb{x}) \ \vert \ i = 1, \ldots, n\}$ and $l_F = \min\{\inf_{\mb{x} \in \mc{P}}f_i(\mb{x}) \ \vert \ i = 1, \ldots, n\}$.
\end{Lemma}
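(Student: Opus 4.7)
The plan is to reduce the uniform deviation $\sup_{\mb{x} \in \mc{P}}\vert F^\k(\mb{x}) - F(\mb{x})\vert$ to a pointwise deviation on a finite $\epsilon$-net for $\mc{P}$ and then combine a sample-wise Lipschitz bound with Hoeffding's inequality and a union bound. The three ingredients are: (i) the grid construction already used inside the proof of Lemma \ref{brac_num_bound} to produce a net of the stated cardinality, (ii) the Lipschitz continuity of the $f_i$, which controls the discretization error uniformly in the random sample, and (iii) the boundedness $l_F \leq f_i(\mb{x}) \leq u_F$, which yields the subgaussian exponent in Hoeffding's inequality.

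First, I would instantiate the grid construction from Lemma \ref{brac_num_bound} with $\Theta = \mc{P}$ to obtain a set $N_\epsilon \subset \mr{R}^p$ of cardinality at most $K_\mc{P}(D/\epsilon)^p = (\sqrt{p}\,D/\epsilon)^p$ such that for every $\mb{x} \in \mc{P}$ there is a $\tilde{\mb{x}} \in N_\epsilon$ with $\norm{\mb{x} - \tilde{\mb{x}}} \leq \epsilon$; the hypothesis $\epsilon < D$ ensures this count is nontrivial. Treating $L_F$ as a uniform upper bound on $L_1,\ldots,L_n$ (so that the Lipschitz constant of $F^\k$ is deterministic), every realization of $F^\k$ and also $F$ are $L_F$-Lipschitz on $\mc{P}$. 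For the nearest net point $\tilde{\mb{x}}$ of any $\mb{x} \in \mc{P}$, two applications of the triangle inequality give
\begin{align*}
\vert F^\k(\mb{x}) - F(\mb{x}) \vert \leq \vert F^\k(\tilde{\mb{x}}) - F(\tilde{\mb{x}}) \vert + 2L_F \epsilon,
\end{align*}
so that the event $\{\sup_{\mb{x}} \vert F^\k(\mb{x}) - F(\mb{x}) \vert \geq \delta\}$ is contained in $\{\max_{\tilde{\mb{x}} \in N_\epsilon} \vert F^\k(\tilde{\mb{x}}) - F(\tilde{\mb{x}}) \vert \geq \delta - 2L_F \epsilon\}$, and the assumption $\epsilon < \delta/(2L_F)$ makes the right-hand threshold strictly positive.

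Next, for each fixed $\tilde{\mb{x}} \in N_\epsilon$, the random variables $f_{\xi_i}(\tilde{\mb{x}})$, $i = 1, \ldots, m^\k$, are i.i.d., lie in $[l_F, u_F]$, and have common mean $F(\tilde{\mb{x}})$. Hoeffding's inequality (in the $\vert X_i - \mr{E} X_i \vert \leq u_F - l_F$ form) yields
\begin{align*}
\mr{P}\{\vert F^\k(\tilde{\mb{x}}) - F(\tilde{\mb{x}}) \vert \geq \delta - 2L_F \epsilon\} \leq 2\exp\!\left(-\frac{m^\k(\delta - 2L_F\epsilon)^2}{2(u_F - l_F)^2}\right),
\end{align*}
and a union bound over the at most $K_\mc{P}(D/\epsilon)^p$ points of $N_\epsilon$ produces exactly the stated inequality.

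The only delicate point is not analytical but structural: one has to obtain a Lipschitz constant for $F^\k$ that does not depend on the random draws $\xi_1, \ldots, \xi_{m^\k}$, otherwise the $2L_F\epsilon$ slack in the triangle inequality would itself be a random quantity whose tail would need to be controlled separately and would spoil the clean union bound. Once this deterministic envelope is in hand, the rest is a textbook discretize-and-concentrate argument of the kind standard in empirical process theory.
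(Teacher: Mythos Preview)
Your proposal is correct and is essentially the paper's own argument: the paper also extracts the finite grid $\Gamma \subset \mc{P}$ produced inside Lemma~\ref{brac_num_bound}, uses the pointwise Lipschitz bound to pass from $\sup_{\mb{x}\in\mc{P}}$ to $\max_{\mb{y}\in\Gamma}$ at the cost of $2L_F\epsilon$, and then applies a union bound together with Hoeffding's inequality. The only cosmetic difference is that the paper phrases the discretization step in bracketing language (brackets $[f(\cdot,\mb{y})-\epsilon L_F,\,f(\cdot,\mb{y})+\epsilon L_F]$ centered at $\mb{y}\in\Gamma$) rather than as an $\epsilon$-net in $\mc{P}$, but the underlying construction and the resulting inequality are identical. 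You are also right to flag that $L_F$ must be read as $\max\{L_1,\ldots,L_n\}$ for the deterministic Lipschitz envelope to work; the paper's proof in fact uses $L_F=\max_i L_i$, so the ``$\min$'' in the statement is a typo.
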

\begin{proof}
Consider the function class $\mc{F} =\{f(\cdot, \mb{x}) \ \vert \ \mb{x} \in \mc{P}\}$ as defined in (\ref{stochastic_problem}), that is $f(i, \mb{x}) = f_i(\mb{x})$. Since $f_i(\cdot)$ each is assumed to be Lipschitz continuous with Lipschitz constant $L_i$, we must have $\vert f_i(\mb{x}) - f_i(\mb{y}) \vert \leq L_F\norm{\mb{x} - \mb{y}}$, where $L_F \equiv \max\{L_1, \ldots, L_n\}$. Moreover, the index set $\mc{P} \in \mr{R}^p$ for the function class $\mc{F}$ is assume to be bounded. Therefore all conditions for Lemma \ref{brac_num_bound} are satisfied and hence the number of brackets of the type $[f(\cdot, \mb{x}) - \epsilon L_F, f(\cdot, \mb{x}) + \epsilon L_F]$ satisfies
\begin{align*}
N_{[]}(\epsilon L_F, \mc{F}, L_1) \leq K_\mc{P}(\frac{D}{\epsilon})^p,
\end{align*}
for every $0 < \epsilon < D$, where $D = \sup\{\norm{\mb{x} - \mb{y}} \ \vert \ \mb{x}, \mb{y} \in \mc{P}\}$ and $K_\mc{P} = (\sqrt{p})^p$. Let $\Gamma \subset \mc{P}$ denote the set of indices of the centers of these brackets and $\xi_1, \ldots \xi_{m^\k}$ be the i.i.d. samples drawn at the $k$-th iteration of the algorithm. Since the brackets centered at $\Gamma$ cover $\mc{F}$, we must have
\begin{align*}
\sup_{\mb{x} \in \mc{P}}\vert \frac{1}{m^\k}\sum_{i=1}^{m^\k} f(\xi_i, \mb{x}) - \mr{E}f(\xi_i, \mb{x}) \vert \leq \max\{\vert \frac{1}{m^\k}\sum_{i=1}^{m^\k} f(\xi_i, \mb{y}) - \mr{E}f(\xi_i, \mb{y}) \vert \ \vert \ \mb{y} \in \Gamma \} + 2 \epsilon L_F.
\end{align*}
Consequently, for every $\delta \geq 0$ and $\epsilon < \min\{\delta / (2L_F), D\}$,  
\begin{align*}
\mr{P}\{\sup_{\mb{x} \in \mc{P}}\vert \frac{1}{m^\k}\sum_{i=1}^{m^\k} f(\xi_i, \mb{x}) - \mr{E}f(\xi_i, \mb{x}) \vert \geq \delta \} &\leq \mr{P}\{\max\{\vert \frac{1}{m^\k}\sum_{i=1}^{m^\k} f(\xi_i, \mb{y}) - \mr{E}f(\xi_i, \mb{y}) \vert \ \vert \ \mb{y} \in \Gamma \} + 2\epsilon L_F \geq \delta \} \\
&\leq \sum_{\mb{y} \in \Gamma} \mr{P}\{\vert \frac{1}{m^\k}\sum_{i=1}^{m^\k} f(\xi_i, \mb{y}) - \mr{E}f(\xi_1, \mb{y}) \vert \geq \delta - 2\epsilon L_F\} \tag{union bound} \\
&\leq \sum_{\mb{y} \in \Gamma} 2 \exp\{-\frac{2m^\k(\delta - 2L_F \epsilon)^2}{(u_F - l_F)^2}\} \tag{Hoeffding inequality} \\
&\leq  2 K_\mc{P}(\frac{D}{\epsilon})^p\exp\{-\frac{2m^\k(\delta - 2L_F \epsilon)^2}{(u_F - l_F)^2}\}.  \tag{$\vert \Gamma \vert \leq K_\mc{P}(\frac{D}{\epsilon})^p$}
\end{align*}
Since by definition, $F^\k(\mb{x}) = \frac{1}{m^\k}\sum_{i=1}^{m^\k} f(\xi_i, \mb{x})$ and $F(\mb{x}) = \mr{E}f(\xi_i, \mb{x})$, the desired result follows.
\end{proof}
\begin{Corollary}\label{concentraion_expectation}
When $m^\k \geq 3$, $$\mr{E}\sup_{\mb{x} \in \mc{P}} \vert F^\k(\mb{x}) - F(\mb{x}) \vert  \leq C_1\sqrt{\frac{\log m^\k}{m^\k}}$$ and 
$$\mr{E}\vert F^\k(\mb{x}^\k_*) - F(\mb{x}^*) \vert \leq C_1 \sqrt{\frac{\log m^\k}{m^\k}}$$ where
\begin{align*}
C_1 = 4(\vert u_F \vert + \vert l_F \vert) K_\mc{P} D^p\exp\{- p(\log\frac{u_F-l_F}{2\sqrt{2}L_F})\} + (u_F - l_F)\sqrt{p+1}.
\end{align*}
\end{Corollary}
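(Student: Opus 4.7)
The plan is to apply the layer-cake identity $\mr{E}[X] = \int_0^\infty \mr{P}(X \geq t)\,dt$ to the nonnegative random variable $X = \sup_{\mb{x} \in \mc{P}} |F^\k(\mb{x}) - F(\mb{x})|$ and bound the tail via Lemma \ref{concentration_bound}. Since each $f_i$ takes values in $[l_F, u_F]$, both $F^\k(\mb{x})$ and $F(\mb{x})$ lie in that interval, so $X \leq u_F - l_F$ almost surely. I would therefore split the integral at a threshold $\delta_0$ of order $\sqrt{\log m^\k/m^\k}$ and write
\begin{equation*}
\mr{E} X \;\leq\; \delta_0 \;+\; \int_{\delta_0}^{u_F - l_F} \mr{P}(X \geq t)\,dt.
\end{equation*}
The first piece $\delta_0$ is the source of the $(u_F - l_F)\sqrt{p+1}\,\sqrt{\log m^\k/m^\k}$ term in $C_1$, while the tail integral will produce the $4(|u_F|+|l_F|)K_\mc{P}D^p\exp\{-p\log((u_F-l_F)/(2\sqrt{2}L_F))\}\,\sqrt{\log m^\k/m^\k}$ term.

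To evaluate the tail integral I would instantiate Lemma \ref{concentration_bound} with the constant choice $\epsilon = (u_F - l_F)/(2\sqrt{2}L_F)$, because this is precisely the value that makes the bracketing prefactor $2K_\mc{P}(D/\epsilon)^p$ equal to $2K_\mc{P}D^p\exp\{-p\log((u_F - l_F)/(2\sqrt{2}L_F))\}$, matching the first term of $C_1$ up to an absolute factor that emerges from integrating the Gaussian tail. After the substitution $u = t - 2L_F\epsilon$, the standard estimate $\int_a^\infty e^{-cu^2}\,du \leq e^{-ca^2}/(2ca)$ bounds the remaining Gaussian integral by a constant multiple of $\exp\{-m^\k(\delta_0 - 2L_F\epsilon)^2/(2(u_F-l_F)^2)\}$ divided by a power of $m^\k(\delta_0 - 2L_F\epsilon)$. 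Calibrating $\delta_0$ so that $m^\k(\delta_0 - 2L_F\epsilon)^2/(2(u_F-l_F)^2)$ grows like $(p+1)\log m^\k$ produces polynomial decay in $m^\k$ of order strictly faster than $\sqrt{\log m^\k/m^\k}$, so the tail contribution is comfortably absorbed into the first term of $C_1$.

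The second inequality follows from the first via a sandwich argument using optimality of $\mb{x}^\k_*$ for $F^\k$ and of $\mb{x}^*$ for $F$:
\begin{equation*}
F^\k(\mb{x}^\k_*) - F(\mb{x}^*) \;\leq\; F^\k(\mb{x}^*) - F(\mb{x}^*) \;\leq\; \sup_{\mb{x} \in \mc{P}} |F^\k(\mb{x}) - F(\mb{x})|,
\end{equation*}
and symmetrically $F(\mb{x}^*) - F^\k(\mb{x}^\k_*) \leq F(\mb{x}^\k_*) - F^\k(\mb{x}^\k_*) \leq \sup_{\mb{x}\in\mc{P}}|F^\k(\mb{x}) - F(\mb{x})|$. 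Hence $|F^\k(\mb{x}^\k_*) - F(\mb{x}^*)| \leq \sup_{\mb{x}\in\mc{P}} |F^\k(\mb{x}) - F(\mb{x})|$ pointwise, and taking expectations reduces the second claim to the first.

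The main obstacle is the simultaneous calibration of $\epsilon$ and $\delta_0$: they must be chosen so that the $\epsilon$-dependent bracketing prefactor and the $\delta_0$-dependent Hoeffding exponent combine into exactly the two pieces of $C_1$, while the admissibility condition $0 < \epsilon < \min\{D,\, \delta/(2L_F)\}$ from Lemma \ref{concentration_bound} must remain valid over the whole tail integration range. The hypothesis $m^\k \geq 3$ is what keeps $\sqrt{\log m^\k/m^\k}$ small enough to guarantee that the chosen $\delta_0$ sits inside this admissible window and that the final $\sqrt{\log m^\k}$ factor is well defined.
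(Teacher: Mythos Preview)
Your overall architecture---split at a threshold $\delta_0$ of order $\sqrt{\log m^{(k)}/m^{(k)}}$, control the tail via Lemma~\ref{concentration_bound}, and reduce the second claim to the first by the optimality sandwich---is sound and matches the paper's. The sandwich argument for the second inequality is in fact cleaner than the paper's version, which redoes the concentration estimate rather than invoking the pointwise domination $|F^{(k)}(\mb{x}^{(k)}_*)-F(\mb{x}^*)|\leq\sup_{\mb{x}}|F^{(k)}(\mb{x})-F(\mb{x})|$ you give.

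The gap is your choice of a \emph{constant} $\epsilon=(u_F-l_F)/(2\sqrt{2}L_F)$. Lemma~\ref{concentration_bound} requires $\epsilon<\delta/(2L_F)$, so with this $\epsilon$ the bound is only available for $\delta>(u_F-l_F)/\sqrt{2}$, a fixed positive constant. You therefore cannot apply it over the integration range $[\delta_0,\,u_F-l_F]$ once $\delta_0\sim\sqrt{\log m^{(k)}/m^{(k)}}$ drops below that constant---and your final remark has the direction reversed: increasing $m^{(k)}$ shrinks $\delta_0$ and makes the admissibility window \emph{harder} to hit, not easier. The paper resolves this by letting $\epsilon$ shrink with $m^{(k)}$, specifically $\epsilon=(u_F-l_F)/(2\sqrt{2}L_F\sqrt{m^{(k)}})$, so that $2L_F\epsilon$ is of order $1/\sqrt{m^{(k)}}$ and sits comfortably below $\delta_0$. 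The price is that the bracketing prefactor $K_{\mc{P}}(D/\epsilon)^p$ now carries an extra factor $(m^{(k)})^{p/2}$, but this is absorbed by choosing $\delta_0$ so that the Hoeffding exponent contributes $-(p+1)\log\sqrt{m^{(k)}}$, leaving a net $1/\sqrt{m^{(k)}}\leq\sqrt{\log m^{(k)}/m^{(k)}}$. Incidentally, the paper does not integrate the tail at all: it uses the cruder two-point bound $\mr{E}X\leq 2(|u_F|+|l_F|)\,\mr{P}(X\geq\delta)+\delta$, which avoids the Gaussian integral altogether. Your layer-cake route would also work once $\epsilon$ is taken to scale with $1/\sqrt{m^{(k)}}$, but the two-point split is shorter.
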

\begin{proof}
First note that both $F^{(k)}(\cdot)$ and $F(\cdot)$ are bounded by $l_F$ and $u_F$; hence, $\sup_{\mb{x} \in \mc{P}}\vert F^\k(\mb{x}) - F(\mb{x})\vert \leq 2(\vert u_F \vert + \vert l_F \vert)$. Then for every $\delta \geq 0$, we have
\begin{align*}
\mr{E}\sup_{\mb{x} \in \mc{P}}\vert F^\k(\mb{x}) - F(\mb{x})\vert &\leq 2(\vert u_F \vert + \vert l_F \vert) \mr{P}\{\sup_{\mb{x} \in \mc{P}}\vert F^\k(\mb{x}) - F(\mb{x})\vert \geq \delta\} + \delta\ \mr{P}\{\sup_{\mb{x} \in \mc{P}}\vert F^\k(\mb{x}) - F(\mb{x})\vert < \delta\}\\
&\leq 4(\vert u_F \vert + \vert l_F \vert) K_\mc{P}(\frac{D}{\epsilon})^p\exp\{-\frac{2m^\k(\delta - 2L_F \epsilon)^2}{(u_F - l_F)^2}\} + \delta\\
&\leq 4(\vert u_F \vert + \vert l_F \vert) K_\mc{P} D^p\exp\{-\frac{2m^\k(\delta - 2L_F \epsilon)^2}{(u_F - l_F)^2} + p\log \frac{1}{\epsilon}\} + \delta.
\end{align*}
Now let $\delta = \frac{(u_F - l_F)\sqrt{4(p+1)\log\sqrt{m^\k}}}{\sqrt{m^\k}\sqrt{2}}$, $\epsilon = \frac{(u_F - l_F)}{2L_F\sqrt{m^\k}\sqrt{2}}$. Then
\begin{align*}
\mr{E}\sup_{\mb{x} \in \mc{P}}\vert F^\k(\mb{x}) - F(\mb{x})\vert &\leq 4(\vert u_F \vert + \vert l_F \vert) K_\mc{P} D^p\exp\{-(\sqrt{4(p+1)\log\sqrt{m^\k}} - 1)^2 - p(\log\frac{u_F-l_F}{2\sqrt{2}L_F}) + p\log\sqrt{m^\k}\}\\
&\quad + \frac{(u_F - l_F)\sqrt{4(p+1)\log\sqrt{m^\k}}}{\sqrt{m^\k}\sqrt{2}}.
\end{align*}
Note that $(x-1)^2 \geq x^2/4$ when $x \geq 2$. Thus, for $m^\k \geq 3$ and $p \geq 1$, $\sqrt{4(p+1)\log\sqrt{m^\k}} \geq 2$. Therefore
\begin{align*}
\mr{E}\sup_{\mb{x} \in \mc{P}}\vert F^\k(\mb{x}) - F(\mb{x})\vert &\leq 4(\vert u_F \vert + \vert l_F \vert) K_\mc{P} D^p\exp\{-(p+1)\log(\sqrt{m^\k}) + p\log\sqrt{m^\k} - p(\log\frac{u_F-l_F}{2\sqrt{2}L_F})\}\\
&\quad + \frac{(u_F - l_F)\sqrt{4(p+1)\log\sqrt{m^\k}}}{\sqrt{m^\k}\sqrt{2}}\\
&\leq C_1\sqrt{\frac{\log m^\k}{m^\k}},
\end{align*}
where $C_1 = 4(\vert u_F \vert + \vert l_F \vert) K_\mc{P} D^p\exp\{- p(\log\frac{u_F-l_F}{2\sqrt{2}L_F})\} + (u_F - l_F)\sqrt{p+1}$.\\
\noindent Next, we will obtain a bound for $\mr{E}\vert F^\k(\mb{x}^\k_*) - F(\mb{x}^*) \vert$. Lemma \ref{concentration_bound} implies both 
\begin{align}
F(\mb{x}^\k_*) - \delta \leq F^\k(\mb{x}^\k_*) \leq F(\mb{x}^\k_*) + \delta \label{base_event}
\end{align}
and 
\begin{align}
F(\mb{x}^*) - \delta \leq F^\k(\mb{x}^*) \leq F(\mb{x}^*) + \delta \label{base_event_2}
\end{align}
happen with probability at least $1- 2 K_\mc{P}(\frac{D}{\epsilon})^p \exp\{-\frac{m^\k(\delta - 2L_F \epsilon)^2}{2(u_F - l_F)^2}\}$. Consequently, on one hand
\begin{align*}
F^\k(\mb{x}^\k_*) &\geq F(\mb{x}^\k_*) - \delta \tag{by \ref{base_event}} \\
&\geq F(\mb{x}^*) -\delta \tag{optimality of $\mb{x}^*$ for $F(\cdot)$}
\end{align*}
On the other hand, 
\begin{align*}
F^\k(\mb{x}^\k_*) &\leq F^\k(\mb{x}^*) \tag{optimiality of $\mb{x}^\k_*$ for $F^\k(\cdot)$} \\
&\leq F(\mb{x}^*) + \delta \tag{by \ref{base_event_2}}
\end{align*}
Therefore, we have
\begin{align*}
\mr{P}\{\vert F^\k(\mb{x}^\k_*) - F(\mb{x}^*) \vert \geq \delta \} \leq 2 K_\mc{P}(\frac{D}{\epsilon})^p \exp\{-\frac{m^\k(\delta - 2L_F \epsilon)^2}{2(u_F - l_F)^2}\},
\end{align*}
and hence $\mr{E}\vert F^\k(\mb{x}^\k_*) - F(\mb{x}^*) \vert = C_1 \sqrt{\frac{\log m^\k}{m^\k}}$.
\end{proof}
\begin{Lemma}
Let $c_i \geq 0$ and $b_i \in \{0, 1\}$ for $i = 1, \ldots, n$. Assume that $\sum_{j=1}^n b_j = m < n$. Then for $0 < a < 1$ we have
\begin{align}\label{combinatorial_result}
\sum_{k=1}^n a^{\sum_{j=k}^n b_j}c_k \leq \sum_{k=1}^{m} a^{m-k+1}c_k + \sum_{k=m+1}^n c_k.
\end{align}
\end{Lemma}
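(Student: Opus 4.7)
The plan is to reduce the inequality to a pointwise upper bound on each summand that holds uniformly over the choice of $b$. Since $0 < a < 1$ and $c_k \geq 0$, the function $t \mapsto a^t$ is decreasing in $t$, so a lower bound on the exponent $\sum_{j=k}^n b_j$ translates directly into an upper bound on $a^{\sum_{j=k}^n b_j} c_k$, and the sum-level inequality will follow by summing term by term.

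The key observation I would use is that, regardless of where the $m$ ones appear among $b_1, \ldots, b_n$, the partial sum $\sum_{j=1}^{k-1} b_j$ is at most $k-1$ (since each $b_j \leq 1$) and at most $m$ (since the total is $m$). Subtracting from $\sum_{j=1}^n b_j = m$ therefore gives
\[
\sum_{j=k}^n b_j \;\geq\; m - \min(k-1,\, m) \;=\; \max(0,\, m-k+1).
\]
Hence $a^{\sum_{j=k}^n b_j} \leq a^{\max(0,\, m-k+1)}$, and multiplying by $c_k \geq 0$ and summing yields
\[
\sum_{k=1}^n a^{\sum_{j=k}^n b_j} c_k \;\leq\; \sum_{k=1}^n a^{\max(0,\, m-k+1)} c_k.
\]
Splitting the right-hand side at $k=m$ produces $\sum_{k=1}^m a^{m-k+1} c_k + \sum_{k=m+1}^n c_k$, which is the desired bound.

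Intuitively this captures the expected worst-case configuration: the LHS is largest when all $m$ ones are packed into the initial positions $1, \ldots, m$, so that the exponent $\sum_{j=k}^n b_j$ drops to zero as quickly as possible and each later coefficient $c_k$ is paid in full. I do not anticipate any genuine obstacle; the only care needed is to confirm that the two regimes of $\max(0,\, m-k+1)$ align cleanly with the split at index $m$, which is automatic since the hypothesis $m < n$ guarantees a nonempty tail that supplies the $a^0 = 1$ terms.
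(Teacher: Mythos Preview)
Your argument is correct. The termwise lower bound $\sum_{j=k}^n b_j \ge \max(0,m-k+1)$ is immediate from $\sum_{j=1}^{k-1}b_j \le \min(k-1,m)$, and since $0<a<1$ and $c_k\ge 0$ this yields the claimed inequality after splitting at $k=m$.

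This is a genuinely different route from the paper's. The paper proves the lemma by an interchange argument: it identifies the right-hand side as the value of the left-hand side at the particular configuration $b_1=\cdots=b_m=1$, $b_{m+1}=\cdots=b_n=0$, and then shows this configuration is a maximizer by checking that whenever $b_r=0$ for some $r\le m$ and $b_s=1$ for some $s>m$, swapping these two values can only increase $\sum_{k=1}^n a^{\sum_{j=k}^n b_j}c_k$ (the terms with $k\le r$ or $k>s$ are unchanged, while those with $r<k\le s$ lose a factor of $a$). Your approach is more elementary and shorter: it bounds each summand separately and avoids any optimization over $\{b_i\}$. The exchange argument, on the other hand, makes explicit that the right-hand side is not just an upper bound but the exact maximum over all admissible $b$, and it generalizes more readily to settings where a clean pointwise bound on the exponent is not available.
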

\begin{proof}
The right hand side of \ref{combinatorial_result} is obtained by setting $b_i = 1$ for $i \leq m$ and $b_i = 0$ for $i > m$. We will show that this choice of $\{b_i\}$ maximizes $\sum_{k=1}^n a^{\sum_{j=k}^n b_j}c_k$. Consider an assignment of $b_i$ that there is a $b_r = 0$ for $r \leq m$ and $b_s = 1$ for $s > m$. Define a new assignment $b_i'$ such that there is $b_i' = b_i$ for $i \neq r, s$, $b_r' = 1$ and $b_s' = 0$. Then
\begin{align*}
\sum_{k=1}^n a^{\sum_{j=k}^n b_j}c_k &= \sum_{k = s+1}^n a^{\sum_{j=k}^n b_j}c_k + \sum_{k = r}^s a^{\sum_{j=k}^n b_j}c_k + \sum_{k=1}^{r - 1}a^{\sum_{j=k}^n b_j}c_k \\
&=\sum_{k = s+1}^n a^{\sum_{j=k}^n b_j'}c_k + \sum_{k = r + 1}^s a^{\sum_{j=k}^n b_j}c_k + \sum_{k=1}^{r}a^{\sum_{j=k}^n b_j'}c_k \\
&=\sum_{k = s+1}^n a^{\sum_{j=k}^n b_j'}c_k + a\sum_{k = r + 1}^s a^{\sum_{j=k}^n b_j'}c_k + \sum_{k=1}^{r}a^{\sum_{j=k}^n b_j'}c_k \\
&\leq \sum_{k = s+1}^n a^{\sum_{j=k}^n b_j'}c_k + \sum_{k = r + 1}^s a^{\sum_{j=k}^n b_j'}c_k + \sum_{k=1}^{r}a^{\sum_{j=k}^n b_j'}c_k \\
&=\sum_{k=1}^n a^{\sum_{j=k}^n b_j'}c_k.
\end{align*}
Therefore, such interchanges will always increase the value of $\sum_{k=1}^n a^{\sum_{j=k}^n b_j}c_k$ and hence setting $b_i = 1$ for $i \leq m$ and $b_i = 0$ for $i > m$ maximizes it.
\end{proof}
\noindent With the developments of the above lemmas and corollary we are ready to state and prove the main results.
\begin{Theorem}\label{asfw_theorem}
Let $\{\mathbf{x}^{(k)}\}_{k \geq 1}$ be the sequence generated by Algorithm \ref{cond_grad_1} for solving Problem $(\ref{general_problem})$, $N$ be the number of vertices used to represent $\mb{x}^{(k)}$ (if VRU is implemented by using Carath\'eodory's theorem, $ N = p + 1$, otherwise $N = \vert V \vert$) and $F^*$ be the optimal value of the problem. Let $\rho = \min\{\frac{1}{2}, \frac{\Omega_{\mathcal{P}}^2\sigma_F}{16N^2 L_F D^2}\}$ where $\sigma_F = \min\{\sigma_1, \ldots, \sigma_n\}$, $L_F = \max\{L_1, \ldots, L_n\}$. Set $m^{(i)} = \lceil 1 / (1-\rho)^{2i + 2}\rceil$. Then for every $k \geq 1$
\begin{align}\label{rate_of_convergence}
\mathbb{E}\{F(\mathbf{x}^{(k+1)}) - F^*\} \leq C_2(1 - \beta)^{(k-1)/2},
\end{align}
where $C_2$ is a deterministic constant and $0 < \beta < \rho \leq 1/2$.
\end{Theorem}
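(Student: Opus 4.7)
The plan is to run the deterministic away-step analysis on the sample-average function $F^\k$ to obtain a per-iteration geometric decrease of the "surrogate" gap $F^\k(\mb{x}^\k)-F^\k(\mb{x}^\k_*)$, and then transfer this decrease back to the true gap $F(\mb{x}^\k)-F^*$ by paying the concentration error from Corollary~\ref{concentraion_expectation}. First I would use the $L^\k$-Lipschitz gradient of $F^\k$ together with the step rule $\gamma^\k=\min\{-\langle\mb{g}^\k,\mb{d}^\k\rangle/(L^\k\|\mb{d}^\k\|^2),\gamma^\k_{\max}\}$ (and the choice of $\mb{d}^\k$ dictated by the criterion on line~7) to write, on a "good" iteration (FW step, or away step with $\gamma^\k<\gamma^\k_{\max}$),
\begin{equation*}
F^\k(\mb{x}^{(k+1)})-F^\k(\mb{x}^\k_*)\le (1-\rho)\bigl(F^\k(\mb{x}^\k)-F^\k(\mb{x}^\k_*)\bigr),
\end{equation*}
where $\rho$ emerges by plugging the bound of Lemma~\ref{linear_approx_lower_bound} on $\max_{\mb{u}\in U^\k,\mb{p}\in V}\langle\nabla F^\k(\mb{x}^\k),\mb{u}-\mb{p}\rangle$ into the descent inequality and invoking strong convexity of $F^\k$ (with parameter $\ge\sigma_F$) to lower bound $\langle\nabla F^\k(\mb{x}^\k),\mb{x}^\k-\mb{x}^\k_*\rangle/\|\mb{x}^\k-\mb{x}^\k_*\|$. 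On a "drop" iteration the standard monotonicity argument $F^\k(\mb{x}^{(k+1)})\le F^\k(\mb{x}^\k)$ is used, and the classical counting bound (a drop removes an active vertex while an FW step adds at most one, so drops constitute at most half of the first $k$ iterations) is applied.

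Next I would convert the $F^\k$-recursion into an $F$-recursion by writing, for every $k$,
\begin{equation*}
F(\mb{x}^{(k+1)})-F^*=\bigl[F(\mb{x}^{(k+1)})-F^\k(\mb{x}^{(k+1)})\bigr]+\bigl[F^\k(\mb{x}^{(k+1)})-F^\k(\mb{x}^\k_*)\bigr]+\bigl[F^\k(\mb{x}^\k_*)-F^*\bigr],
\end{equation*}
and similarly relating $F^\k(\mb{x}^\k)-F^\k(\mb{x}^\k_*)$ to $F(\mb{x}^\k)-F^*$ up to two more $F-F^\k$ differences. Taking expectations and applying Corollary~\ref{concentraion_expectation} bounds each noise term by $C_1\sqrt{\log m^\k/m^\k}$. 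Denoting by $b^\k\in\{0,1\}$ the indicator of a "good" iteration and setting $c^\k\asymp\sqrt{\log m^\k/m^\k}$, this yields a recursion of the form
\begin{equation*}
\mr{E}\bigl[F(\mb{x}^{(k+1)})-F^*\bigr]\le (1-\rho b^\k)\,\mr{E}\bigl[F(\mb{x}^\k)-F^*\bigr]+c^\k.
\end{equation*}

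Unrolling and taking worst case over the sequence $\{b^\k\}$ subject to $\sum_{j\le k}(1-b^\k)\le k/2$ gives a sum of the shape $\sum_k (1-\rho)^{\sum_{j\ge k}b^j}c^k$, which is exactly what Lemma~4 bounds, producing at most $\sum_{k\le k/2}(1-\rho)^{k/2-k+1}c^k$ plus a tail. With the geometric schedule $m^\k=\lceil (1-\rho)^{-2k-2}\rceil$ one has $c^\k\lesssim\sqrt{k}(1-\rho)^{k+1}$, so both the contraction factor and the noise contributions combine to yield a bound of order $(1-\beta)^{(k-1)/2}$ for any $\beta<\rho$, with the extra square root absorbed by replacing $\rho$ with a slightly smaller $\beta$. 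The constant $C_2$ then collects the initial gap, $C_1$, the polynomial-in-$k$ factors from $\sqrt{\log m^\k}$, and the geometric-series constants.

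The main obstacle will be the third paragraph: the random occurrence of drop steps makes the multiplicative factor $(1-\rho b^\k)$ in the recursion data-dependent, so the noise terms $c^k$ are not simply multiplied by a clean geometric sequence but by $(1-\rho)^{\sum_{j\ge k}b^j}$, a random exponent. Lemma~4 is what makes this tractable, and matching the sample-size schedule $m^\k$ to the contraction rate so that $\sqrt{\log m^\k/m^\k}$ decays fast enough to dominate the worst-case combinatorial arrangement, while still leaving a factor $(1-\beta)^{(k-1)/2}$ on the outside, is the delicate calibration that drives the choice of $\rho$, $m^\k$, and $\beta$.
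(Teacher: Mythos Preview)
Your overall strategy matches the paper's: run the deterministic away-step descent argument on the surrogate $F^\k$, use the counting bound on drop steps, convert between $F^\k$ and $F$ at the cost of concentration errors, and control the accumulated noise with the combinatorial Lemma~4 and the schedule $m^{(i)}=\lceil(1-\rho)^{-2i-2}\rceil$. The gap is in the order of operations in your third paragraph. The displayed recursion
\[
\mr{E}\bigl[F(\mb{x}^{(k+1)})-F^*\bigr]\le (1-\rho b^\k)\,\mr{E}\bigl[F(\mb{x}^\k)-F^*\bigr]+c^\k
\]
is not a valid statement: the good-step indicator $b^\k$ is random (it depends on the samples at iteration $k$ and on $\mb{x}^\k$, hence on all earlier samples), so it cannot sit outside an expectation that has already been taken. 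What survives after taking expectations of the one-step pathwise relation is $\mr{E}\bigl[(1-\rho)^{b^\k}(F(\mb{x}^\k)-F^*)\bigr]$ on the right, and since $b^\k$ and $F(\mb{x}^\k)-F^*$ are correlated this cannot be factored; consequently there is nothing to ``unroll and take worst case over'' at the level of expectations.

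The fix, which is exactly how the paper proceeds, is to defer the expectation to the very end. Keep the one-step relation pathwise, with the noise being the random \emph{nonnegative} quantities $N^{(i)}$ built from $\lvert F^{(i)}(\cdot)-F(\cdot)\rvert$ and $\lvert F^{(i)}_*-F^*\rvert$; unroll pathwise to obtain $\sum_i (1-\rho)^{\sum_{j\ge i}b^{(j)}}N^{(i)}$; then apply Lemma~4 \emph{pathwise} (legitimate because $c_i=N^{(i)}\ge 0$ and the drop-step constraint $\sum_j(1-b^{(j)})\le (k+1)/2$ holds on every sample path) to replace the random exponents by their worst-case deterministic arrangement. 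Only after Lemma~4 has produced deterministic multipliers in front of the $N^{(i)}$ do you take expectations and invoke Corollary~\ref{concentraion_expectation} on each $\mr{E}[N^{(i)}]$. You diagnose the obstacle correctly in your final paragraph, but your third paragraph carries out the steps in the wrong order; the paper's remark immediately after the proof stresses precisely this point.
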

\begin{proof}
At iteration $k$, let $\mb{x}^\k$ denote the current solution, $\xi_1, \ldots, \xi_{m^\k}$ denote the samples obtained in the algorithm, $\mb{d}^\k$ denote the direction that the algorithm will take at this step and $\gamma^\k$ denote the step length. Define
$F^\k(\mb{x}) = \frac{1}{m^\k}\sum_{i=1}^{m^\k} f(\xi_i, \mb{x})$, $\mb{x}^\k_* = \arg\min_{\mb{x} \in \mc{P}}F^\k(\mb{x})$ and $F^\k_* = F^\k(\mb{x}^\k_*)$. Note that $F^\k$ is Lipschitz continuous with Lipschitz constant $L^\k = \frac{1}{m^\k}\sum_{i=1}^{m^\k}L_{\xi_i}$ and strongly convex with constant $\sigma^\k = \frac{1}{m^\k}\sum_{i=1}^{m^\k}\sigma_{\xi_i}$. In addition, the stochastic gradient $\mb{g}^\k = \nabla F^\k(\mb{x})$. From the choice of $\mb{d}^{(k)}$ in the algorithm, 
\begin{align*}
\langle \mb{g}^{(k)}, \mb{d}^{(k)} \rangle \leq \frac{1}{2}(\langle \mb{g}^{(k)}, \mb{p}^\k - \mb{x}^\k \rangle + \langle \mb{g}^{(k)}, \mb{x}^\k - \mb{u}^\k \rangle) = \frac{1}{2}\langle \mb{g}^{(k)}, \mb{p}^{(k)} - \mb{u}^{(k)} \rangle \leq 0.
\end{align*}
Hence, we can lower bound $\langle \mb{g}^{(k)}, \mb{d}^{(k)} \rangle^2$ by 
\begin{align*}
\langle \mb{g}^{(k)}, \mb{d}^{(k)} \rangle^2 &\geq \frac{1}{4} \langle \mb{g}^{(k)}, \mb{u}^{(k)} - \mb{p}^\k \rangle^2 \\
&\geq  \frac{1}{4}\max_{\mb{p} \in V, \mb{u} \in U^\k}\langle \mb{g}^{(k)}, \mb{u} - \mb{p} \rangle^2 \tag{definition of $\mb{p}^\k$ and $\mb{u}^\k$} \\
& = \frac{1}{4} \max_{\mb{p} \in V, \mb{u} \in U^\k}\langle \nabla F^\k(\mb{x}^\k), \mb{u} - \mb{p} \rangle^2 \tag{$\mb{g}^\k = \nabla F^\k(\mb{x}^\k)$} \\
& \geq \frac{1}{4} \frac{\Omega_{\mathcal{P}}^2}{\vert U^{(k)} \vert^2} \frac{\langle \nabla F^\k(\mathbf{x}^\k), \mathbf{x}^\k - \mathbf{x}_*^\k \rangle^2}{\norm{\mathbf{x}^\k - \mb{x}_*^\k}^2} \tag{by Lemma \ref{linear_approx_lower_bound}} \\
&\geq \frac{\Omega_{\mathcal{P}}^2}{4N^2}\frac{\{F^\k(\mb{x}^\k) - F_*^\k\}^2}{\norm{\mathbf{x}^\k - \mb{x}_*^\k}^2} \tag{Convexity of $F^\k(\cdot)$} \\
& \geq \frac{\Omega_{\mathcal{P}}^2 \sigma^\k}{8N^2} \{F^\k(\mb{x}^\k) - F^\k_*\} \tag{by strong convexity of $F^\k(\cdot)$} \\
& \geq \frac{\Omega_{\mathcal{P}}^2 \sigma_F}{8N^2} \{F^\k(\mb{x}^\k) - F^\k_*\}.
\end{align*}
Similarly, we can upper bound $\langle \mb{g}^{(k)}, \mb{d}^{(k)} \rangle$ by 
\begin{align*}
\langle \mb{g}^{(k)}, \mb{d}^{(k)} \rangle & \leq \frac{1}{2}\langle \mb{g}^{(k)}, \mb{p}^{(k)} - \mb{u}^{(k)} \rangle \\
& \leq  \frac{1}{2} \langle \mb{g}^{(k)}, \mb{x}_*^\k - \mb{x}^{(k)}\rangle \tag{definition of $\mb{p}^\k$ and $\mb{u}^\k$} \\
& = \frac{1}{2}\langle \nabla F^\k(\mb{x}^\k), \mb{x}_*^\k - \mb{x}^\k \rangle \tag{$\mb{g}^\k = \nabla F^\k(\mb{x}^\k)$}\\
& \leq \frac{1}{2}\{F^\k_* - F^\k(\mb{x}^\k)\} \tag{Convexity of $F(\cdot)$}.
\end{align*}
With the above bounds, we can separate our analysis into the following four cases at iteration $k$
\begin{description}
\item[($A^{(k)}$)]\hspace{0.5cm} $\gamma_{\max}^\k \geq 1$ and $\gamma^\k \leq 1$ .
\item[($B^{(k)}$)]\hspace{0.5cm} $\gamma_{\max}^\k \geq 1$ and $\gamma^\k \geq 1$.
\item[($C^{(k)}$)]\hspace{0.5cm} $\gamma_{\max}^\k < 1$ and $\gamma^\k < \gamma_{\max}^\k$.
\item[($D^{(k)}$)]\hspace{0.5cm} $\gamma_{\max}^\k < 1$ and $\gamma^\k = \gamma_{\max}^\k$.
\end{description}
By the descent lemma, we have 
\begin{align}
F^\k(\mathbf{x}^{(k + 1)}) = F^\k(\mathbf{x}^{(k)} + \gamma^{(k)}\mathbf{d}^{(k)}) &\leq F^\k(\mathbf{x}^{(k)}) + \gamma^{(k)}\langle \nabla F^\k(\mathbf{x^{(k)}}), \mathbf{d}^{(k)} \rangle + \frac{L^\k(\gamma^{(k)})^2}{2}\norm{\mathbf{d}^{(k)}}^2 \nonumber \\
&= F^\k(\mathbf{x}^{(k)}) + \gamma^{(k)}\langle \mb{g}^{(k)}, \mathbf{d}^{(k)} \rangle + \frac{L^\k(\gamma^{(k)})^2}{2}\norm{\mathbf{d}^{(k)}}^2. \label{basic_neq}
\end{align}

\noindent In case $(A^{(k)})$, let $\delta_{A^{(k)}}$ denote the indicator function for this case. Then
\begin{align*}
\delta_{A^{(k)}}\{F^\k(\mathbf{x}^{(k + 1)}) - F_*^\k\} &\leq \delta_{A^{(k)}} \{ F^\k(\mathbf{x}^{(k)}) - F_*^\k + \gamma^{(k)}\langle \mb{g}^{(k)}, \mathbf{d}^{(k)} \rangle + \frac{L^\k(\gamma^{(k)})^2}{2}\norm{\mathbf{d}^{(k)}}^2 \} \\
&= \delta_{A^{(k)}}\{F^\k(\mathbf{x}^{(k)}) - F_*^\k - \frac{\langle \mb{g}^\k, \mb{d}^\k \rangle^2}{2L^\k\norm{\mb{d}^\k}^2} \}\tag{definition of $\gamma^\k$ in case $A^{(k)}$}  \\
&\leq \delta_{A^\k}\{(1- \frac{\Omega_{\mathcal{P}}^2\sigma_F}{16N^2 L^\k D^2})(F^\k(\mathbf{x}^{(k)}) - F_*^\k)\}\\
&\leq \delta_{A^\k}\{(1- \frac{\Omega_{\mathcal{P}}^2\sigma_F}{16N^2 L_F D^2})(F^\k(\mathbf{x}^{(k)}) - F_*^\k)\}
\end{align*}

\noindent In case $(B^{(k)})$, since $\gamma^\k > 1$, we have 
\begin{align}
&-\langle \mb{g}^\k, \mb{d}^\k \rangle > L^\k \norm{\mb{d}^\k}^2 \quad \quad \quad \text{and} \label{case_b_in_prod_bound}\\
&\gamma^{(k)}\langle \mb{g}^{(k)}, \mathbf{d}^{(k)} \rangle + \frac{L^\k(\gamma^{(k)})^2}{2}\norm{\mathbf{d}^{(k)}}^2 \leq \langle \mb{g}^{(k)}, \mathbf{d}^{(k)} \rangle + \frac{L^\k}{2}\norm{\mathbf{d}^{(k)}}^2. \label{case_b_monotone}
\end{align}
Use $\delta_{B^{(k)}}$ to denote the indicator function for this case. Then,
\begin{align*}
\delta_{B^{(k)}} \{F^\k(\mathbf{x}^{(k + 1)}) - F^\k_* \}&\leq \delta_{B^{(k)}} \{F^\k(\mathbf{x}^{(k)}) - F_*^\k + \gamma^{(k)}\langle \nabla F^\k(\mathbf{x}^{(k)}), \mathbf{d}^{(k)} \rangle + \frac{L^\k(\gamma^{(k)})^2}{2}\norm{\mathbf{d}^{(k)}}^2\}  \\
&= \delta_{B^{(k)}} \{F^\k(\mathbf{x}^{(k)}) - F_*^\k + \gamma^{(k)}\langle \mb{g}^{(k)}, \mathbf{d}^{(k)} \rangle + \frac{L^\k(\gamma^{(k)})^2}{2}\norm{\mathbf{d}^{(k)}}^2\\
&\leq \delta_{B^{(k)}} \{F^\k(\mathbf{x}^{(k)}) - F_*^\k +\langle \mb{g}^{(k)}, \mathbf{d}^{(k)} \rangle + \frac{L^\k}{2}\norm{\mathbf{d}^{(k)}}^2 \} \tag{ by (\ref{case_b_monotone})}\\
&\leq \delta_{B^{(k)}} \{ F^\k(\mathbf{x}^{(k)}) - F_*^\k + \frac{1}{2}\langle \mb{g}^{(k)}, \mathbf{d}^{(k)} \rangle \} \tag{by (\ref{case_b_in_prod_bound})} \\
&\leq \delta_{B^{(k)}} \{\frac{1}{2}(F^\k(\mathbf{x}^{(k)}) - F_*^\k)\}
\end{align*}

In case $(C^{(k)})$, let $\delta_{C^{(k)}}$ be the indicator function for this case and we can use exactly the same argument as in case (A) to obtain the following inequality 
\begin{align*}
\delta_{C^{(k)}} \{F^\k(\mathbf{x}^{(k + 1)}) - F^\k_*\}  &\leq \delta_{C^{(k)}} \{F^\k(\mathbf{x}^{(k)}) - F_*^\k - \frac{\langle \mb{g}^\k, \mb{d}^\k \rangle^2}{2L^\k\norm{\mb{d}^\k}^2} \}\\
&\leq \delta_{C^\k}\{(1- \frac{\Omega_{\mathcal{P}}^2\sigma_F}{16N^2 L_F D^2})(F^\k(\mathbf{x}^{(k)}) - F_*^\k)\}
\end{align*}

\noindent Case $(D^{(k)})$ is the so called ``drop step" in the conditional gradient algorithm with away-steps. Use $\delta_{D^{(k)}}$ to denote the indicator function for this case. Note that $\gamma^\k = \gamma_{\max}^\k \leq -\langle \mb{g}^\k, \mb{d}^\k \rangle / (L^\k \norm{\mb{d}^\k}^2)$ in this case. Hence, we have
\begin{align*}
\delta_{D^{(k)}} \{(F^\k(\mathbf{x}^{(k + 1)}) - F_*^\k) \} &\leq \delta_{D^{(k)}}\{ F^\k(\mathbf{x}^{(k)}) - F_*^\k + \gamma^{(k)}\langle \nabla F^\k(\mathbf{x^{(k)}}), \mathbf{d}^{(k)} \rangle + \frac{L^\k(\gamma^{(k)})^2}{2}\norm{\mathbf{d^{(k)}}}^2 \} \\
&= \delta_{D^{(k)}}\{ F^\k(\mathbf{x}^{(k)}) - F_*^\k + \gamma^{(k)}\langle \mb{g}^{(k)}, \mathbf{d}^{(k)} \rangle + \frac{L^\k(\gamma^{(k)})^2}{2}\norm{\mathbf{d^{(k)}}}^2 \}\\
&\leq \delta_{D^{(k)}} \{ F^\k(\mathbf{x}^{(k)}) - F_*^\k + \frac{\gamma^\k}{2}\langle \mb{g}^{(k)}, \mathbf{d}^{(k)} \rangle\} \\
&\leq \delta_{D^{(k)}} \{ F^\k(\mathbf{x}^{(k)}) - F_*^\k\}.
\end{align*}
\noindent Define $\rho = \min\{\frac{1}{2}, \frac{\Omega_{\mathcal{P}}^2\sigma_F}{16N^2 L_F D^2}\}$. Note that $\rho$ is a deterministic constant between 0 and 1. Therefore we have
\begin{align*}
F^\k(\mb{x}^{(k+1)}) - F^\k_* &\leq (\{1 - \rho)^{\{1 - \delta_{D^\k}\}} (F^\k(\mb{x}^\k) - F^\k_*) \\
&= (1 - \rho)^{\{1 - \delta_{D^\k}\}}(F^{(k-1)}(\mb{x}^\k) - F^{(k-1)}_*) \\
&\quad + (1 - \rho)^{\{1 - \delta_{D^\k}\}}\{F^\k(\mb{x}^\k) - F^\k_* -F^{(k-1)}(\mb{x}^\k) + F^{(k-1)}_*\}\\
&= (1 - \rho)^{\{1 - \delta_{D^\k}\}}(F^{(k-1)}(\mb{x}^\k) - F^{(k-1)}_*) \\
&\quad + (1 - \rho)^{\{1 - \delta_{D^\k}\}}\{F^\k(\mb{x}^\k) - F(\mb{x}^\k)+ F(\mb{x}^\k) -F^{(k-1)}(\mb{x}^\k) + F^*  - F^\k_*  + F^{(k-1)}_* - F^*\}\\
&\leq (1 - \rho)^{\{1 - \delta_{D^\k}\}}(F^{(k-1)}(\mb{x}^\k) - F^{(k-1)}_*) \\
&\quad + (1 - \rho)^{\{1 - \delta_{D^\k}\}}\{\vert F^\k(\mb{x}^\k) - F(\mb{x}^\k) \vert + \vert F^{(k-1)}(\mb{x}^\k) - F(\mb{x}^\k) \vert + \vert F^\k_* - F^*\vert  + \vert F^{(k-1)}_* - F^* \vert \}\\
&\leq (1 - \rho)^{\sum_{i=1}^k\{1 - \delta_{D^{(i)}}\}}(F^{(0)}(\mb{x}^{(1)}) - F_*^{(0)}) + \\
&\quad \sum_{i=1}^k (1 - \rho)^{\sum_{j=i}^k\{1 - \delta_{D^{(j)}}\}}\{\vert F^{(i)}(\mb{x}^{(i)}) - F(\mb{x}^{(i)}) \vert + \vert F^{(i-1)}(\mb{x}^{(i)}) - F(\mb{x}^{(i)}) \vert + \\
&\quad \vert F^{(i)}_* - F^*\vert  + \vert F^{(i-1)}_* - F^* \vert\}.
\end{align*}
\noindent At iteration $k$, there are at most $(k+1)/2$ drop steps, i.e., at most $(k+1)/2$ $\delta_{D^{(i)}}$'s equal to 1. Then by Lemma \ref{combinatorial_result}, we have 
\begin{align*}
&\quad\sum_{i=1}^k (1 - \rho)^{\sum_{j=i}^k\{1 - \delta_{D^{(j)}}\}}\{\vert F^{(i)}(\mb{x}^{(i)}) - F(\mb{x}^{(i)}) \vert + \vert F^{(i-1)}(\mb{x}^{(i)}) - F(\mb{x}^{(i)}) \vert + \vert F^{(i)}_* - F^*\vert  + \vert F^{(i-1)}_* - F^* \vert\}\\
&\leq \sum_{i=k/2}^k\{\vert F^{(i)}(\mb{x}^{(i)}) - F(\mb{x}^{(i)}) \vert + \vert F^{(i-1)}(\mb{x}^{(i)}) - F(\mb{x}^{(i)}) \vert +
\vert F^{(i)}_* - F^*\vert  + \vert F^{(i-1)}_* - F^* \vert\} \\
&+ \sum_{i = 1}^{k/2 - 1}(1 - \rho)^{k/2 - i}\{\vert F^{(i)}(\mb{x}^{(i)}) - F(\mb{x}^{(i)}) \vert + \vert F^{(i-1)}(\mb{x}^{(i)}) - F(\mb{x}^{(i)}) \vert +\vert F^{(i)}_* - F^*\vert  + \vert F^{(i-1)}_* - F^* \vert\}.
\end{align*}
Therefore
\begin{align*}
F^\k(\mb{x}^{(k+1)}) - F^\k_* &\leq (1 - \rho)^{\frac{k-1}{2}}(u_F-l_F) \\
&+ \sum_{i=k/2}^k\{\vert F^{(i)}(\mb{x}^{(i)}) - F(\mb{x}^{(i)}) \vert + \vert F^{(i-1)}(\mb{x}^{(i)}) - F(\mb{x}^{(i)}) \vert +
\vert F^{(i)}_* - F^*\vert  + \vert F^{(i-1)}_* - F^* \vert\} \\
&+ \sum_{i = 1}^{k/2 - 1}(1 - \rho)^{k/2 - i}\{\vert F^{(i)}(\mb{x}^{(i)}) - F(\mb{x}^{(i)}) \vert + \vert F^{(i-1)}(\mb{x}^{(i)}) - F(\mb{x}^{(i)}) \vert +\vert F^{(i)}_* - F^*\vert  + \vert F^{(i-1)}_* - F^* \vert\}.
\end{align*}
In addition, $F^\k(\mb{x}^{(k+1)}) - F^\k_* = F(\mb{x}^{(k+1)}) - F^* + (F^\k(\mb{x}^{(k+1)}) - F(\mb{x}^{(k+1)})) + (F^* - F^\k_*)$. Thus
\begin{align*}
F(\mb{x}^{(k+1)}) - F^* &\leq (1 - \rho)^{\frac{k-1}{2}}(u_F-l_F) \\
&+ \sum_{i=k/2}^{k+1}\{\vert F^{(i)}(\mb{x}^{(i)}) - F(\mb{x}^{(i)}) \vert + \vert F^{(i-1)}(\mb{x}^{(i)}) - F(\mb{x}^{(i)}) \vert +
\vert F^{(i)}_* - F^*\vert  + \vert F^{(i-1)}_* - F^* \vert\} \\
&+ \sum_{i = 1}^{k/2 - 1}(1 - \rho)^{k/2 - i}\{\vert F^{(i)}(\mb{x}^{(i)}) - F(\mb{x}^{(i)}) \vert + \vert F^{(i-1)}(\mb{x}^{(i)}) - F(\mb{x}^{(i)}) \vert +\vert F^{(i)}_* - F^*\vert  + \vert F^{(i-1)}_* - F^* \vert\}.
\end{align*}
Note that for any deterministic $\mb{x} \in \mc{P}$, we have $\mr{E} F^{\k}(\mb{x}) = F(\mb{x})$. In addition, by Corollary \ref{concentraion_expectation}, the following bound holds for every iteration $k$
\begin{align*}
\mr{E}\vert F^\k(\mb{x}^\k) - F(\mb{x}^\k) \vert \leq \mr{E}\sup_{\mb{x} \in \mc{P}}\vert F^\k(\mb{x}) - F(\mb{x})\vert \leq C_1 \sqrt{\frac{\log m^\k}{m^\k}}
\end{align*}
and
\begin{align*}
\mr{E}\vert F^\k_* - F^*\vert \leq C_1 \sqrt{\frac{\log m^\k}{m^\k}}.
\end{align*}

\noindent Combining all above bounds and use $m^{(i)} = \lceil 1 / (1-\rho)^{2i + 2}\rceil$, we have
\begin{align*}
\mr{E}\{F(\mb{x}^{(k+1)}) - F^*\} &\leq (1 - \rho)^{\frac{k-1}{2}}(u_F-l_F) \\
&\quad + 2C_1\{\sum_{i=k/2}^{k+1}(\sqrt{\frac{\log m^{(i)}}{m^{(i)}}} + \sqrt{\frac{\log m^{(i-1)}}{m^{(i-1)}}}) + \sum_{i=1}^{k/2 - 1}(1 - \rho)^{k/2-i}(\sqrt{\frac{\log m^{(i)}}{m^{(i)}}} + \sqrt{\frac{\log m^{(i-1)}}{m^{(i-1)}}})\}\\
&\leq (1 - \rho)^{\frac{k-1}{2}}(u_F-l_F) + 4C_1\{\sum_{i=k/2}^{k+1}\sqrt{\frac{\log m^{(i-1)}}{m^{(i-1)}}} + \sum_{i=1}^{k/2 - 1}(1-\rho)^{k/2 -i}\sqrt{\frac{\log m^{(i-1)}}{m^{(i-1)}}}\} \tag{$\frac{\log x}{x}$ decreases for $x > e$}\\
&\leq (1 - \rho)^{\frac{k-1}{2}}(u_F-l_F) + 4C_1\sqrt{2\log\frac{1}{1-\rho}}\{\sum_{i= k/2}^{k+1}(1-\rho)^i\sqrt{i} + \sum_{i=1}^{k/2 - 1}(1-\rho)^{k/2}\sqrt{i}\} \\
&\leq C_2 (1 - \beta)^{\frac{k-1}{2}}
\end{align*}
for some constant $C_2$ and $ 0 < \beta < \rho < 1$.
\end{proof}

\noindent\textbf{Remark:} The proof of Theorem \ref{asfw_theorem} doesn't have any stochastic arguments involved until the very end and we use Lemma \ref{combinatorial_result} to get rid of the indicator function for the `drop-steps' so that the stochastic arguments based on concentration inequalities can be applied. Note that we cannot take expectation on the stochastic gradients and utilize their unbiasedness property because of the presence of the indicator functions. This proof technique is specifically designed for the `drop-step' in ASFW and can be useful in analyzing other similar algorithms.

\begin{Corollary}\label{as_conv}
Let $\{\mathbf{x}^{(k)}\}_{k \geq 1}$ be the sequence generated by Algorithm \ref{cond_grad_1} for solving Problem $(\ref{general_problem})$. Then $$\frac{F(\mathbf{x}^\k) - F^*}{(1 - \omega)^{\frac{k-1}{2}}} \rightarrow 0$$ almost surely as $k$ tends to infinity for some $0 < \omega < \beta$. Therefore $F(\mathbf{x}^\k)$ linearly converges to $F^*$ almost surely.
\end{Corollary}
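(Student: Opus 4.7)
\noindent\textbf{Proof plan for Corollary \ref{as_conv}.} The plan is to upgrade the linear-in-expectation bound from Theorem \ref{asfw_theorem} to an almost-sure statement by a standard Markov + Borel--Cantelli argument, with the only care being that the geometric decay rate must be shaved slightly to absorb the summation.

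First I would fix an arbitrary $\omega \in (0, \beta)$ and observe that the random variable $Y_k \equiv F(\mathbf{x}^\k) - F^*$ is nonnegative, since $\mathbf{x}^\k \in \mc{P}$ and $F^*$ is the optimum of \ref{general_problem}. This nonnegativity is what permits a direct application of Markov's inequality: for any $\epsilon > 0$,
\begin{align*}
\mr{P}\!\left\{ \frac{Y_k}{(1-\omega)^{(k-1)/2}} > \epsilon \right\} \leq \frac{\mr{E} Y_k}{\epsilon\,(1-\omega)^{(k-1)/2}} \leq \frac{C_2}{\epsilon}\left(\frac{1-\beta}{1-\omega}\right)^{\!(k-2)/2} \cdot \frac{1}{\sqrt{1-\omega}},
\end{align*}
where the second inequality uses Theorem \ref{asfw_theorem} applied at index $k-1$ (so $\mr{E} Y_k \leq C_2(1-\beta)^{(k-2)/2}$). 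Since $\omega < \beta$, the ratio $r \equiv \sqrt{(1-\beta)/(1-\omega)}$ lies strictly in $(0,1)$, so the probabilities are summable in $k$ for each fixed $\epsilon > 0$.

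Next I would apply the first Borel--Cantelli lemma: summability of the tail probabilities implies
\begin{align*}
\mr{P}\!\left\{ \frac{Y_k}{(1-\omega)^{(k-1)/2}} > \epsilon \text{ infinitely often} \right\} = 0.
\end{align*}
To conclude $Y_k/(1-\omega)^{(k-1)/2} \to 0$ almost surely, I would take a countable sequence $\epsilon_n \downarrow 0$ (e.g.\ $\epsilon_n = 1/n$) and intersect the corresponding null events. On the complement, for every $n$ there exists $K_n$ such that $Y_k \leq \epsilon_n (1-\omega)^{(k-1)/2}$ for all $k \geq K_n$, which is exactly the statement $Y_k (1-\omega)^{-(k-1)/2} \to 0$. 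Almost-sure linear convergence of $F(\mathbf{x}^\k)$ to $F^*$ at rate $(1-\omega)^{1/2}$ then follows immediately.

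I do not anticipate a substantive obstacle: the argument is essentially bookkeeping once Theorem \ref{asfw_theorem} is in hand. The only subtle point worth flagging in the write-up is the strict inequality $\omega < \beta$, which is needed to make $r < 1$; choosing $\omega = \beta$ would give only a summation of $1/\sqrt{1-\omega}^{\,k}$ terms multiplied by $(1-\beta)^{k/2}$, which cancels and fails to be summable. The nonnegativity of $Y_k$ is also essential (Markov rather than Chebyshev), but it is automatic here.
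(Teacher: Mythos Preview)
Your proposal is correct and follows essentially the same route as the paper's own proof: Markov's inequality on the nonnegative gap $F(\mb{x}^\k)-F^*$, summability of the resulting geometric series because $\omega<\beta$, then Borel--Cantelli. You are in fact slightly more careful than the paper about the index shift from Theorem \ref{asfw_theorem} and about passing from ``$>\epsilon$ finitely often for each $\epsilon$'' to genuine a.s.\ convergence via a countable sequence $\epsilon_n\downarrow 0$.
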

\begin{proof}
For every $\epsilon > 0$, let $E^\k$ denotes the event that $ (F(\mathbf{x}^\k) - F^*) / (1 - \omega)^{(k-1)/2} > \epsilon$. By Markov inequality
\begin{align*}
\sum_{k=2}^\infty \mr{P}(E^\k) &= \sum_{k=1}^\infty \mr{P}((F(x^\k) - F^*) / (1 - \omega)^{(k-1)/2} > \epsilon) \\
&\leq \sum_{k=2}^\infty \frac{\mr{E}\{F(x^\k) - F^*\}}{\epsilon(1 - \omega)^{(k-1)/2}} \\
&\leq \frac{C_2}{\epsilon}\sum_{k=2}^\infty (\frac{1 - \beta}{1 - \omega})^{\frac{k-1}{2}} \\
&< \infty.
\end{align*}
Therefore $\sum_{k=2}^\infty \mr{P}(E^\k) < \infty$ and the Borel-Cantelli lemma implies that $\mr{P}(\lim\sup_{k\rightarrow\inf}E^\k) = 0$ which implies $(F(\mb{x}^\k) - F^*) / (1 - \omega)^{(k-1)/2}$ converges to $0$ almost surely. This implies that every sequence generates by Algorithm \ref{cond_grad_1} linearly converges to the optimal function value almost surely.
\end{proof}

\noindent\textbf{Remark:} Note that the result in Corollary \ref{as_conv} only relies on the property that an algorithm converges linearly in expectation. Therefore, we can apply exactly the same argument to show that every sequence generated by the algorithm in \cite{JZ13} converges linearly almost surely.
\begin{Corollary}\label{asfw_num_stoc_grad}
To obtain an $\epsilon$-accurate solution, Algorithm \ref{cond_grad_1} requires $O((1 / \epsilon)^{4\eta})$ of stochastic gradient evaluations, where $0 < \eta = \log(1 - \rho) / \log(1- \beta) < 1$.
\end{Corollary}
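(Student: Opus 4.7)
The plan is to chain the expected-error estimate of Theorem \ref{asfw_theorem} with the prescribed schedule $m^{(i)} = \lceil 1/(1-\rho)^{2i+2}\rceil$, translating an iteration count into a total stochastic-gradient-oracle cost. Nothing new about the algorithm or about empirical processes is needed; all of the work is already contained in the geometric rate $(1-\beta)^{(k-1)/2}$ and in the exponential growth of $m^{(i)}$.

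First I would fix the iteration horizon. To guarantee $\mr{E}\{F(\mb{x}^{(k+1)}) - F^*\} \leq \epsilon$, Theorem \ref{asfw_theorem} requires $C_2(1-\beta)^{(k-1)/2} \leq \epsilon$, which rearranges to
\[
k \;\geq\; 1 + \frac{2\log(C_2/\epsilon)}{\log\bigl(1/(1-\beta)\bigr)} \;=\; O\!\left(\frac{\log(1/\epsilon)}{\log(1/(1-\beta))}\right),
\]
so that $O(\log(1/\epsilon))$ iterations suffice, with the constant $C_2$ absorbed into the $O(\cdot)$.

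Second, I would add up the per-iteration sample sizes. Because $m^{(i)}$ is geometric in $i$ with ratio $1/(1-\rho)^2 > 1$, the sum is dominated by its last term:
\[
\sum_{i=1}^{k} m^{(i)} \;\leq\; k + \sum_{i=1}^{k} \frac{1}{(1-\rho)^{2i+2}} \;\leq\; \frac{1}{1-(1-\rho)^2} \cdot \frac{1}{(1-\rho)^{2k+2}} + k \;=\; O\!\bigl((1-\rho)^{-2k}\bigr).
\]
Substituting the horizon from Step 1 and taking logarithms converts the exponent into
\[
2k \,\log\!\tfrac{1}{1-\rho} \;=\; \frac{4\log(1/\epsilon)\cdot\log(1/(1-\rho))}{\log(1/(1-\beta))} \;=\; 4\eta\,\log(1/\epsilon),
\]
so the total number of stochastic gradient evaluations is $O\bigl((1/\epsilon)^{4\eta}\bigr)$ as claimed, with $\eta$ as in the statement.

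The main (and really only) obstacle is careful bookkeeping: one must verify that the geometric-series bound in Step 2 is tight up to the fixed constant $1/(1-(1-\rho)^2)$, that the additive $k$ and the constant $C_2$ both vanish into the $O(\cdot)$, and that the $\lceil\cdot\rceil$ in the definition of $m^{(i)}$ contributes only an additive $k$. No probabilistic argument is needed here because Theorem \ref{asfw_theorem} already delivers the expectation bound; this corollary is purely an accounting statement about how the inflating batch-size schedule interacts with the linear expected decay.
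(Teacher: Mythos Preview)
Your proposal is correct and follows essentially the same route as the paper: solve $C_2(1-\beta)^{(k-1)/2}\le\epsilon$ for the iteration horizon, sum the geometric batch-size schedule $m^{(i)}$ (dominated by its last term), and substitute to obtain the exponent $4\eta$. Your treatment is in fact slightly more careful than the paper's, since you explicitly account for the $\lceil\cdot\rceil$ via an additive $k$ and keep the constant $C_2$ inside the logarithm, whereas the paper simply drops both.
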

\begin{proof}
Let $k$ be the total number of iterations performed by Algorithm \ref{cond_grad_1} so that an $\epsilon$-accurate solution is obtained for the first time. Theorem \ref{asfw_theorem} implies $ C_2(1 - \beta)^\frac{k-1}{2} < \epsilon $ and hence $k \geq 1 + 2 \log\epsilon / \log(1 - \beta)$. In iteration $i$ of Algorithm \ref{cond_grad_1}, $m^{(i)} = 1 / (1 - \rho)^{2i + 2}$ of stochastic gradient evaluations are performed. Thus, the total number of stochastic gradient evaluations until iteration $k$ is
\begin{align*}
\sum_{i = 1}^k m^{(i)} &= \sum_{i=1}^k \frac{1}{(1 - \rho)^{(2i+2)}} \\
& = \frac{1}{(1 - \rho)^2}\frac{1 / (1 - \rho)^2 - 1 /(1 - \rho)^{2k + 2}}{1 - 1 / (1 - \rho)^2} \\
&\leq \frac{2}{(1 - \rho)^{2k + 4}} \leq \frac{2}{(1 - \rho)^4}\exp\{-2k\log(1 - \rho)\} \\
&\leq \frac{2}{(1 - \rho)^4}\exp\{-2\log(1 - \rho) - 4\frac{\log \epsilon \log(1 - \rho)}{\log(1 - \beta)}\}\\
&= O((\frac{1}{\epsilon})^ \frac{4\log(1 - \rho)}{\log(1 - \beta)})\\
&= O((\frac{1}{\epsilon})^{4\eta}).
\end{align*}
\end{proof}
\begin{Theorem}
Let $\{\mathbf{x}^{(k)}\}_{k \geq 1}$ be the sequence generated by Algorithm \ref{cond_grad_pair} for solving Problem $(\ref{general_problem})$, $N$ be the number of vertices used to represent $\mb{x}^{(k)}$ (if VRU is implemented by using Carath\'eodory's theorem, $ N = p + 1$, otherwise $N = \vert V \vert$) and $F^*$ be the optimal value of the problem. Let $\kappa = \min\{\frac{1}{2}, \frac{\Omega_{\mathcal{P}}^2\sigma_F}{8N^2 L_F D^2}\}$ where $\sigma_F = \min\{\sigma_1, \ldots, \sigma_n\}$, $L_F = \max\{L_1, \ldots, L_n\}$. Set $m^{(i)} = \lceil 1 / (1-\kappa)^{2i + 2}\rceil$. Then for every $k \geq 1$
\begin{align}\label{rate_of_convergence}
\mathbb{E}\{F(\mathbf{x}^{(k+1)}) - F^*\} \leq C_3(1 - \phi)^{k / (3 \vert V \vert ! + 1)}
\end{align}
where $C_3$ is a deterministic constant and $0 < \phi < \kappa \leq  1/2$.
\end{Theorem}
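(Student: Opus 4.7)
The proof will closely mirror that of Theorem \ref{asfw_theorem}, with the same overall architecture: (i) a deterministic per-iteration descent recursion on the sampled objective $F^{(k)}$, (ii) a combinatorial argument to bound the number of ``bad'' (drop) iterations, and (iii) concentration bounds from Corollary \ref{concentraion_expectation} to pass from $F^{(k)}$ to $F$. The two genuinely new ingredients are the geometry of the pairwise direction $\mb{d}^\k = \mb{p}^\k - \mb{u}^\k$ and a more delicate combinatorial count for drop steps in the pairwise scheme.

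First, starting from the descent lemma, I would reproduce the chain of inequalities used in Theorem \ref{asfw_theorem} to obtain the two-sided bounds on $\langle \mb{g}^\k, \mb{d}^\k\rangle$. Because now $\mb{d}^\k = \mb{p}^\k - \mb{u}^\k$ exactly, the lower bound on $\langle \mb{g}^\k,\mb{d}^\k\rangle^2$ gains a factor of $4$ compared with the ASFW case (no extra $\tfrac14$), which is the source of the larger constant $\tfrac{\Omega_{\mc{P}}^2\sigma_F}{8N^2L_FD^2}$ inside $\kappa$. Using Lemma \ref{linear_approx_lower_bound}, convexity, and strong convexity of $F^\k$, I obtain
\begin{align*}
\langle \mb{g}^\k,\mb{d}^\k\rangle^2 \geq \tfrac{\Omega_{\mc{P}}^2\sigma_F}{4N^2}\{F^\k(\mb{x}^\k) - F^\k_*\},\qquad \langle \mb{g}^\k,\mb{d}^\k\rangle \leq F^\k_* - F^\k(\mb{x}^\k).
\end{align*}
Then the same four-case split ($A^\k$--$D^\k$) based on whether $\gamma^\k_{\max}\geq 1$ and whether the optimal unconstrained step is taken yields, in the three ``good'' cases, a geometric contraction $F^\k(\mb{x}^{(k+1)})-F^\k_* \leq (1-\kappa)(F^\k(\mb{x}^\k)-F^\k_*)$, and in the drop-step case $D^\k$ only monotonicity $F^\k(\mb{x}^{(k+1)})\leq F^\k(\mb{x}^\k)$.

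The main obstacle, and the source of the exponent $k/(3\vert V\vert!+1)$, is the combinatorial bound on drop steps. Unlike ASFW, a pairwise drop does not automatically halve the upper bound on cumulative drops; I would invoke the Lacoste-Julien--Jaggi counting argument, which shows that along any trajectory of PFW the number of non-good steps in the first $k$ iterations is at most $(3\vert V\vert !)\cdot(\text{number of good steps})$, so that at least $k/(3\vert V\vert!+1)$ of the first $k$ iterations are good. Applying this bound in place of the ``$(k+1)/2$ drop steps'' count used for ASFW, and then re-running the telescoping argument with the analogue of Lemma \ref{combinatorial_result} (with $m$ replaced by $k-k/(3\vert V\vert!+1)$ and $a = 1-\kappa$), gives
\begin{align*}
F^\k(\mb{x}^{(k+1)}) - F^\k_* \leq (1-\kappa)^{k/(3\vert V\vert !+1)}(u_F - l_F) + \text{(stochastic error terms)},
\end{align*}
where the error terms have exactly the same structure as in the proof of Theorem \ref{asfw_theorem}.

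Finally, I would convert this sample-path inequality into a bound on $\mr{E}\{F(\mb{x}^{(k+1)}) - F^*\}$ by the identity $F^\k(\mb{x}^{(k+1)}) - F^\k_* = (F(\mb{x}^{(k+1)}) - F^*) + (F^\k(\mb{x}^{(k+1)}) - F(\mb{x}^{(k+1)})) + (F^* - F^\k_*)$, taking expectations, and invoking Corollary \ref{concentraion_expectation} on each $\vert F^{(i)}(\mb{x}^{(i)}) - F(\mb{x}^{(i)})\vert$ and $\vert F^{(i)}_* - F^*\vert$ term. With the schedule $m^{(i)} = \lceil 1/(1-\kappa)^{2i+2}\rceil$, each such term is bounded by $C_1\sqrt{\log m^{(i)}/m^{(i)}} \leq C_1'(1-\kappa)^{i}\sqrt{i}$, and geometric summation (splitting at $i = k/(2(3\vert V\vert!+1))$ the way Theorem \ref{asfw_theorem} splits at $k/2$) absorbs these terms into a slightly weaker base $(1-\phi)$ for any $\phi < \kappa$, giving the claimed $C_3(1-\phi)^{k/(3\vert V\vert !+1)}$ bound. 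The hard step is really only the combinatorial drop-step count; once that is in place, everything else is a direct transliteration of the ASFW proof.
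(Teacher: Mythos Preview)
Your proposal is correct and matches the paper's own proof almost exactly: derive the improved bounds on $\langle \mb{g}^\k,\mb{d}^\k\rangle$ from $\mb{d}^\k=\mb{p}^\k-\mb{u}^\k$, reuse the four-case analysis of Theorem~\ref{asfw_theorem}, replace the $(k+1)/2$ drop-step count by the Lacoste-Julien--Jaggi bound yielding at least $k/(3|V|!+1)$ good steps, and finish with Lemma~\ref{combinatorial_result} and Corollary~\ref{concentraion_expectation} as before. Two tiny remarks: the paper records the weaker upper bound $\langle \mb{g}^\k,\mb{d}^\k\rangle\le \tfrac12(F^\k_*-F^\k(\mb{x}^\k))$ (your sharper version without the $\tfrac12$ is also correct and harmless), and the paper explicitly names the extra ``bad'' case as a \emph{swap step} rather than a drop step---you subsume it under ``non-good steps,'' which is fine since the cited counting argument handles both together.
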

\begin{proof}
Since $\mb{d}^\k = \mb{p}^\k - \mb{u}^\k$, similar to the proof of Theorem \ref{asfw_theorem}, we have
\begin{align*}
\langle \mb{g}^\k, \mb{d}^\k \rangle^2 &\geq \frac{\Omega_\mc{P}^2\sigma_F}{4N^2}\{F^\k(\mb{x}^\k) - F^\k_*\} \\
\langle \mb{g}^\k, \mb{d}^\k \rangle &\leq  \frac{1}{2}(F^\k_* - F^\k(\mb{x}^\k)).
\end{align*}
The remaining proof for Theorem \ref{asfw_theorem} could also apply here except that the case $D^{(k)}$ can be either a `drop step' or a so-called `swap step'. A swap step moves the weight of a active vertex to another active vertex. There are at most $(1 - \frac{1}{3\vert V \vert! + 1})k$ drop steps and swap steps after $k$ iteration. The same argument as in Theorem \ref{asfw_theorem} implies 
\begin{align*}
\mathbb{E}\{F(\mathbf{x}^{(k+1)}) - F^*\} \leq C_3(1 - \phi)^{k / (3 \vert V \vert ! + 1)}
\end{align*}
for a deterministic constant $C_3$ and $0 < \phi < \kappa \leq 1/2$.
\end{proof}
\begin{Corollary}
Let $\{\mathbf{x}^{(k)}\}_{k \geq 1}$ be the sequence generated by Algorithm \ref{cond_grad_pair} for solving Problem $(\ref{general_problem})$. Then $$\frac{F(\mathbf{x}^\k) - F^*}{(1 - \psi)^{\frac{k}{3\vert V \vert ! + 1}}} \rightarrow 0$$ almost surely as $k$ tends to infinity for some $0 < \psi < \phi$. Therefore $F(\mathbf{x}^\k)$ linearly converges to $F^*$ almost surely.
\end{Corollary}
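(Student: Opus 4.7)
The plan is to mimic exactly the argument used for Corollary \ref{as_conv}, since the previous theorem has already supplied a linear-in-expectation bound of the form $\mr{E}\{F(\mb{x}^\k)-F^*\}\le C_3(1-\phi)^{k/(3\vert V\vert !+1)}$. The only change of substance is the exponent $k/(3\vert V\vert !+1)$ in place of $(k-1)/2$, so the Borel--Cantelli / Markov template transfers line-for-line.

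First I would pick $\psi \in (0,\phi)$ and, for arbitrary $\epsilon>0$, define the event
\[
E^\k \;=\; \Bigl\{\,\frac{F(\mb{x}^\k)-F^*}{(1-\psi)^{k/(3\vert V\vert !+1)}} > \epsilon\,\Bigr\}.
\]
Then I would apply Markov's inequality together with the bound from the preceding theorem to obtain
\[
\mr{P}(E^\k) \;\le\; \frac{\mr{E}\{F(\mb{x}^\k)-F^*\}}{\epsilon\,(1-\psi)^{k/(3\vert V\vert !+1)}} \;\le\; \frac{C_3}{\epsilon}\Bigl(\frac{1-\phi}{1-\psi}\Bigr)^{\!k/(3\vert V\vert !+1)}.
\]
Because $\psi<\phi$, the ratio $(1-\phi)/(1-\psi)$ lies in $(0,1)$, and summing over $k$ gives a convergent geometric series, so $\sum_{k\ge 1}\mr{P}(E^\k)<\infty$.

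By the Borel--Cantelli lemma, $\mr{P}(\limsup_{k\to\infty} E^\k)=0$, which means that almost surely only finitely many of the events $E^\k$ occur. Since $\epsilon>0$ was arbitrary, letting $\epsilon$ range over a countable sequence tending to zero (and intersecting the corresponding full-measure sets) yields $(F(\mb{x}^\k)-F^*)/(1-\psi)^{k/(3\vert V\vert !+1)}\to 0$ almost surely. This in turn implies that $F(\mb{x}^\k)-F^* = o\bigl((1-\psi)^{k/(3\vert V\vert !+1)}\bigr)$ almost surely, i.e., each sample path converges linearly to the optimal value.

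I do not anticipate any real obstacle: the combinatorial/empirical-process work has already been absorbed into the in-expectation rate, and the passage from "linear in expectation" to "linear almost surely" is a standard Markov + Borel--Cantelli argument, identical in form to Corollary \ref{as_conv}. The only minor bookkeeping item is to make sure the exponent $k/(3\vert V\vert !+1)$ is treated as a real number (not an integer) when summing the geometric series, which causes no difficulty since $\sum_{k\ge 1} r^{k/(3\vert V\vert !+1)}$ still converges for any $r\in(0,1)$.
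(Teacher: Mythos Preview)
Your proposal is correct and is exactly the approach the paper intends: the authors simply state that the proof is ``almost the same as the proof of Corollary~\ref{as_conv},'' i.e., Markov's inequality combined with Borel--Cantelli applied to the in-expectation rate from the preceding theorem. Your write-up is in fact slightly more careful than the paper's, since you make explicit the passage from ``for every fixed $\epsilon>0$'' to almost-sure convergence via a countable intersection.
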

\noindent Proof of this Corollary is almost the same as the proof of Corollary \ref{as_conv}.
\begin{Corollary}
To obtain an $\epsilon$-accurate solution, Algorithm \ref{cond_grad_pair} requires $O((1 / \epsilon)^{(6\vert V \vert! + 2)\xi})$ of stochastic gradient evaluations, where $0 < \zeta = \log(1 - \rho) / \log(1- \phi) < 1$.
\end{Corollary}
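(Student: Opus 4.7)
The plan is to mirror the argument used in Corollary \ref{asfw_num_stoc_grad} for ASFW, adapting only the two places where the PSFW bounds differ from the ASFW bounds. Specifically, the convergence rate in the previous theorem degrades from $(1-\beta)^{(k-1)/2}$ (ASFW) to $(1-\phi)^{k/(3|V|!+1)}$ (PSFW), while the per-iteration sample size $m^{(i)} = \lceil 1/(1-\kappa)^{2i+2} \rceil$ has the same geometric form as in ASFW, with $\rho$ replaced by $\kappa$. So only the exponent arising from inverting the convergence rate will change.

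First, I would invert the convergence bound from the preceding theorem to determine the number of iterations $k$ needed to reach accuracy $\epsilon$: the inequality $C_3(1-\phi)^{k/(3|V|!+1)} \leq \epsilon$ yields
\begin{equation*}
k \;\geq\; \frac{(3|V|!+1)\log(\epsilon/C_3)}{\log(1-\phi)},
\end{equation*}
so $k = \Theta\bigl((3|V|!+1)\log(1/\epsilon)/|\log(1-\phi)|\bigr)$ iterations suffice. Since each numerator and denominator is negative, the ratio is positive; I would keep track of signs carefully so the final exponent comes out positive.

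Next, I would sum the stochastic gradient cost exactly as in the ASFW corollary. Using the geometric series
\begin{equation*}
\sum_{i=1}^{k} m^{(i)} \;\leq\; \sum_{i=1}^{k}\frac{1}{(1-\kappa)^{2i+2}} \;\leq\; \frac{2}{(1-\kappa)^{2k+4}} \;=\; \frac{2}{(1-\kappa)^{4}}\exp\bigl\{-2k\log(1-\kappa)\bigr\},
\end{equation*}
and substituting the lower bound on $k$ from the previous step, the leading factor becomes
\begin{equation*}
\exp\!\left\{-\,2\,(3|V|!+1)\,\frac{\log(1-\kappa)}{\log(1-\phi)}\,\log\epsilon\right\} \;=\; \left(\frac{1}{\epsilon}\right)^{(6|V|!+2)\zeta},
\end{equation*}
where $\zeta = \log(1-\kappa)/\log(1-\phi)$. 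Finally, I would verify that $0<\zeta<1$: since $0<\phi<\kappa\le 1/2$ from the theorem, we have $\log(1-\kappa) < \log(1-\phi) < 0$, so the ratio of two negative numbers with the numerator larger in magnitude lies strictly between $0$ and $1$.

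The proof is essentially a routine geometric-series computation, so there is no real obstacle beyond bookkeeping. The one subtlety worth flagging is the typographical inconsistency between $\zeta$ in the definition and $\xi$ in the exponent of the statement, and between $\rho$ (an ASFW parameter) and $\kappa$ (the PSFW analogue) in the definition of $\zeta$; I would state the corollary with $\zeta = \log(1-\kappa)/\log(1-\phi)$ so that the exponent genuinely reflects the PSFW constants appearing in the preceding theorem. The constant factor $2/(1-\kappa)^4$ and the constant $C_3^{2(3|V|!+1)/|\log(1-\phi)|}$ picked up when inverting the error bound are absorbed into the $O(\cdot)$.
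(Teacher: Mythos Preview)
Your approach is exactly what the paper does: it simply states that the proof is the same as that of Corollary~\ref{asfw_num_stoc_grad}, and your adaptation of that argument (replacing $\rho$ by $\kappa$, $\beta$ by $\phi$, and the exponent $1/2$ by $1/(3|V|!+1)$) is the intended one. Your bookkeeping on the geometric sum and the substitution for $k$ are correct and produce the claimed exponent $(6|V|!+2)\zeta$.

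One slip worth correcting: your verification that $0<\zeta<1$ is wrong as written. From $0<\phi<\kappa$ you correctly get $\log(1-\kappa)<\log(1-\phi)<0$, but then the ratio $\zeta=\log(1-\kappa)/\log(1-\phi)$ has numerator of \emph{larger} absolute value than the denominator, so $\zeta>1$, not $\zeta<1$. (The same issue is present in the paper's statement of both this corollary and Corollary~\ref{asfw_num_stoc_grad}.) This does not affect the derivation of the $O\bigl((1/\epsilon)^{(6|V|!+2)\zeta}\bigr)$ bound itself, only the side claim about the size of the exponent; you should either drop that claim or state it with the correct inequality.
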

\noindent Proof of this Corollary is the same as the proof of Corollary \ref{asfw_num_stoc_grad}.
\section{Numerical Experiments}
\subsection{Simulated Data}
We apply the proposed algorithms to the synthetic problem below:
\begin{align*}
&\text{minimize} \quad \norm{\mathbf{Ax} - \mathbf{b}}_2^2 + \frac{1}{2}\norm{\mathbf{x}}_2^2 \\
&\text{such that} \quad l \leq x_1 \leq x_2 \leq \cdots \leq x_p \leq u,
\end{align*}
where $\mathbf{A} \in \mathbb{R}^{n \times p}$, $\mathbf{b} \in \mathbb{R}^n$ and $\mathbf{x} \in \mathbb{R}^p$. We generated the entries of $\mathbf{A}$ and $\mathbf{b}$ from  standard normal distribution and set $n = 10^6$, $p = 1000$, $l = -1$ and $u = 1$. This problem can be viewed as minimizing a sum of strongly convex functions subject to a polytope constraint. Such problems can be found in shape restricted regression literatures.  We compared the ASFW and PSFW with two variance-reduced stochastic methods, the variance-reduced stochastic Frank-Wolfe (SVRF) method \citep{LH16} and the proximal variance-reduced stochastic gradient (Prox-SVRG) method \citep{JZ13} \citep{LZ14}. Both Prox-SVRG and SVRF are epoch based algorithms. They first fix a reference point and compute the exact gradient at the reference point at the beginning of each epoch. Within each epoch, both algorithms compute variance reduced gradients in every step using the control variates technique based on the reference point. The major difference between them is that in every iteration, the Prox-SVRG takes a proximal gradient step and the SVRF takes a Frank-Wolfe step. For detailed implementations of SVRF, we followed Algorithm 1 in \cite{LH16} and chose the parameters according to Theorem 1 in \cite{LH16}. For the Prox-SVRG, we followed the Algorithm in \cite{LZ14} and set the number of iterations in each epoch to be $m = 2n$ and set the step size to be $\gamma = 0.1 / L$ found by \cite{LZ14} to give the best results for Prox-SVRG, where $n$ is the sample size and $L$ is the Lipschitz constant of the gradient of the objective function. For ASFW and PSFW implementations, we followed Algorithm \ref{cond_grad_1} and Algorithm \ref{cond_grad_pair} and used adaptive step sizes since we know the Lipschitz constants of the gradients of the objective functions. The number of samples that we used to compute stochastic gradients for ASFW and PSFW was set to be $1.04^k + 100$ at the iteration $k$. The linear optimization sub-problems in Frank-Wolfe algorithms and the projection step in Prox-SVRG were solved by using the GUROBI solver. We summarize the parameters that were used in the algorithms at iteration $k$ and epoch $t$ in Table 2.\\
\begin{table}[h]
\label{alg_para}
\begin{center}
\begin{tabular}{|c|ccc|}
\hline
	& step-size & batch-size & \#iterations \\
\hline
ASFW &$\min\{-\langle \mb{g}^\k, \mb{d}^\k \rangle / (L^\k \norm{\mb{d}^\k}^2), \gamma_{\max}\}$ & $100 + 1.04^k$ & N/A \\
PSFW &$\min\{-\langle \mb{g}^\k, \mb{d}^\k \rangle / (L^\k \norm{\mb{d}^\k}^2), \gamma_{\max}\}$ & $100 + 1.04^k$ & N/A \\
SVRF &$2 / (k + 1)$ & $96(k + 1)$ & $2^{t + 3} - 2$ \\
SVRG &$0.1 / L$ &$1$ & $2n$ \\
\hline
\end{tabular}
\caption{In ASFW and PSFW, $\mb{g}^\k$ is the stochastic gradient, $L^\k$ is the Lipschitz constant of the stochastic gradient at iteration $k$, $\mb{d}^\k$ is the direction the algorithms take at iteration $k$ and $\gamma_{\max}$ is the maximum of the possible step sizes (see Algorithm \ref{cond_grad_1} and \ref{cond_grad_pair}). In Prox-SVRG, $L$ is the Lipschitz constant of the gradient of the objection function and $n$ is the sample size.}
\end{center}
\end{table}

\noindent To make fair comparisons, we use the same starting point for all four algorithms. The loss functions using ASFW, PSFW and Prox-SVRG and the running minimum using SVRF are plotted against CPU time. From the plot, we can see that ASFW and PSFW performed as well as or slightly better than their stochastic competitors. At the very beginning, Prox-SVRG has a more rapid descent while ASFW and PSFW could obtain smaller function values later on. We can also observe big swings in SVRF periodically. This is because at the beginning of each epoch, SVRF proceeds with noisy gradients and very large step sizes. According to Theorem 1 in \cite{LH16}, the step size of the first step in every epoch can be as large as $1$.
\begin{center}
\includegraphics[scale=0.5]{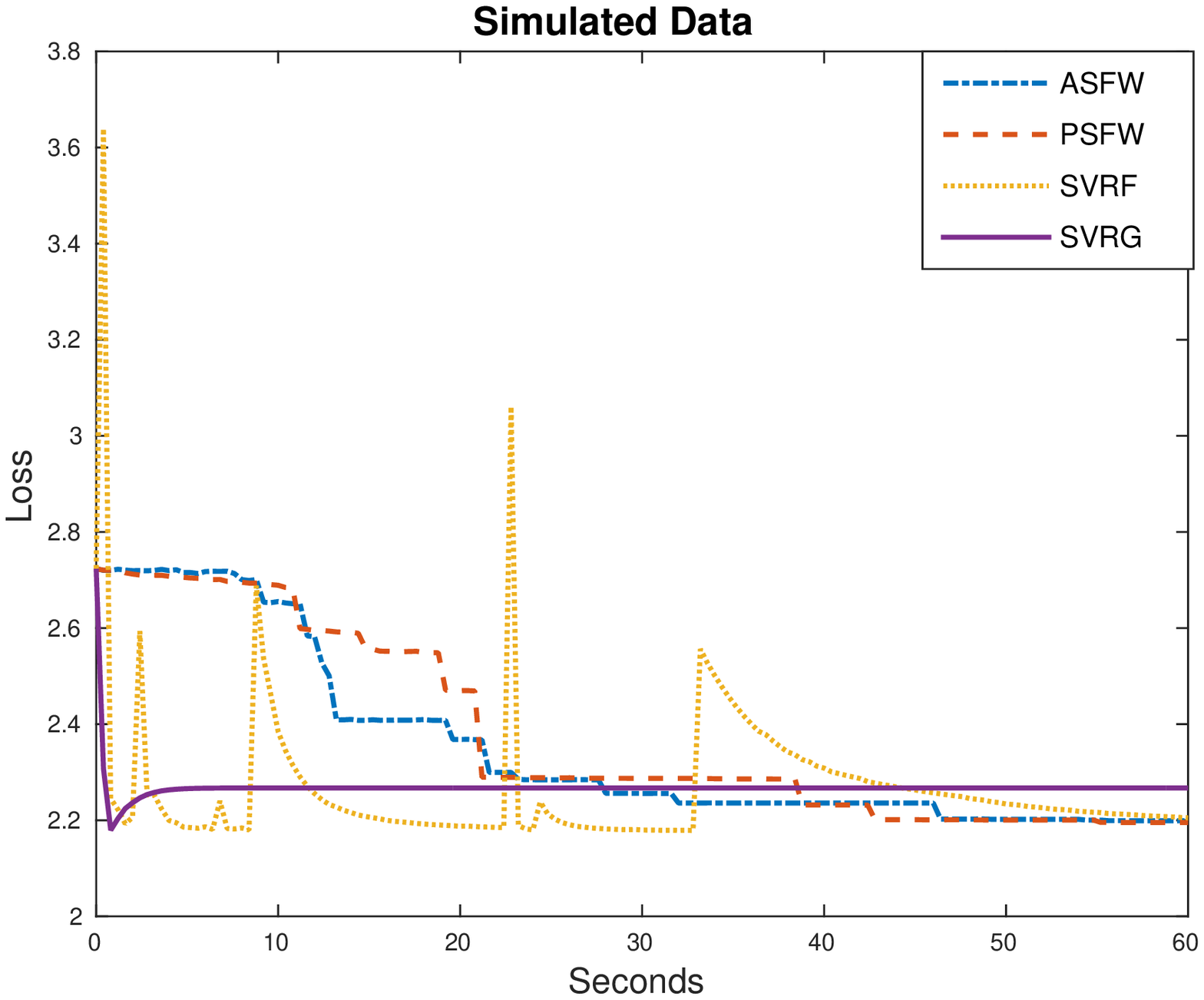}
\end{center}

\subsection{Million Song Dataset}
We implemented ASFW and PSFW for solving least squares problems with elastic-net regularization and tested them on the Million Song Dataset (YearPredictionMSD) \citep{Lich13}\citep{BM11}, which is a dataset of songs with the goal of predicting the release year of a song from its audio features. There are $n = 463,715$ training samples and $p = 90$ features in this dataset. The dataset is the one with largest number of training samples available in the UCI machine learning data repository. Therefore it is interesting to examine the actual performance of stochastic algorithms on such a massive dataset. The least squares with elastic-net regularization model that we used was, 
\begin{align*}
\min_{\mb{x} \in \mr{R}^p}\frac{1}{n} \norm{\mb{Ax} - \mb{b}}^2_2 + \lambda \norm{\mb{x}}_1 + \mu\norm{\mb{x}}^2_2
\end{align*}
where $\mb{A} \in \mr{R}^{n \times p}$ and $\mb{b} \in \mr{R}^n$. $\mu \geq 0$ and $\lambda \geq 0$ are regularization parameters. In the numerical experiments, we considered the constrained version of the problem, that is, 
\begin{align*}
&\text{minimize}\quad \; \frac{1}{n} \norm{\mb{Ax} - \mb{b}}^2_2 + \mu\norm{\mb{x}}^2_2 \\
&\text{subject to}\quad \norm{\mathbf{x}}_1 \leq \alpha
\end{align*}
where $\alpha > 0$ is inversely related to $\lambda$.

\noindent We also compared the ASFW and PSFW with SVRF and Prox-SVRG. We followed the same settings in this real data experiment as that in the simulated data experiment except that we used explicit solutions for solving linear optimizations over an $l_1$-balls in FW algorithms and we used the algorithm in \cite{duchi08} for the solving projections onto $l_1$-balls in the Prox-SVRG algorithm instead of using GUROBI for solving linear optimizations and projections. To make fair comparisons, we use the same starting point for all four algorithms. The logarithm of the loss functions using ASFW, PSFW and Prox-SVRG and the running minimum using SVRF are plotted against CPU time. The figures indicate that the performance of ASFW and PFW is as well as or better than Prox-SVRG and SVRF under different regularization parameters. We also observed huge swings in SVRF periodically in these experiments. Therefore we plot the running minimums instead of the most recent function values for SVRF.

\begin{center}
\includegraphics[scale=0.2]{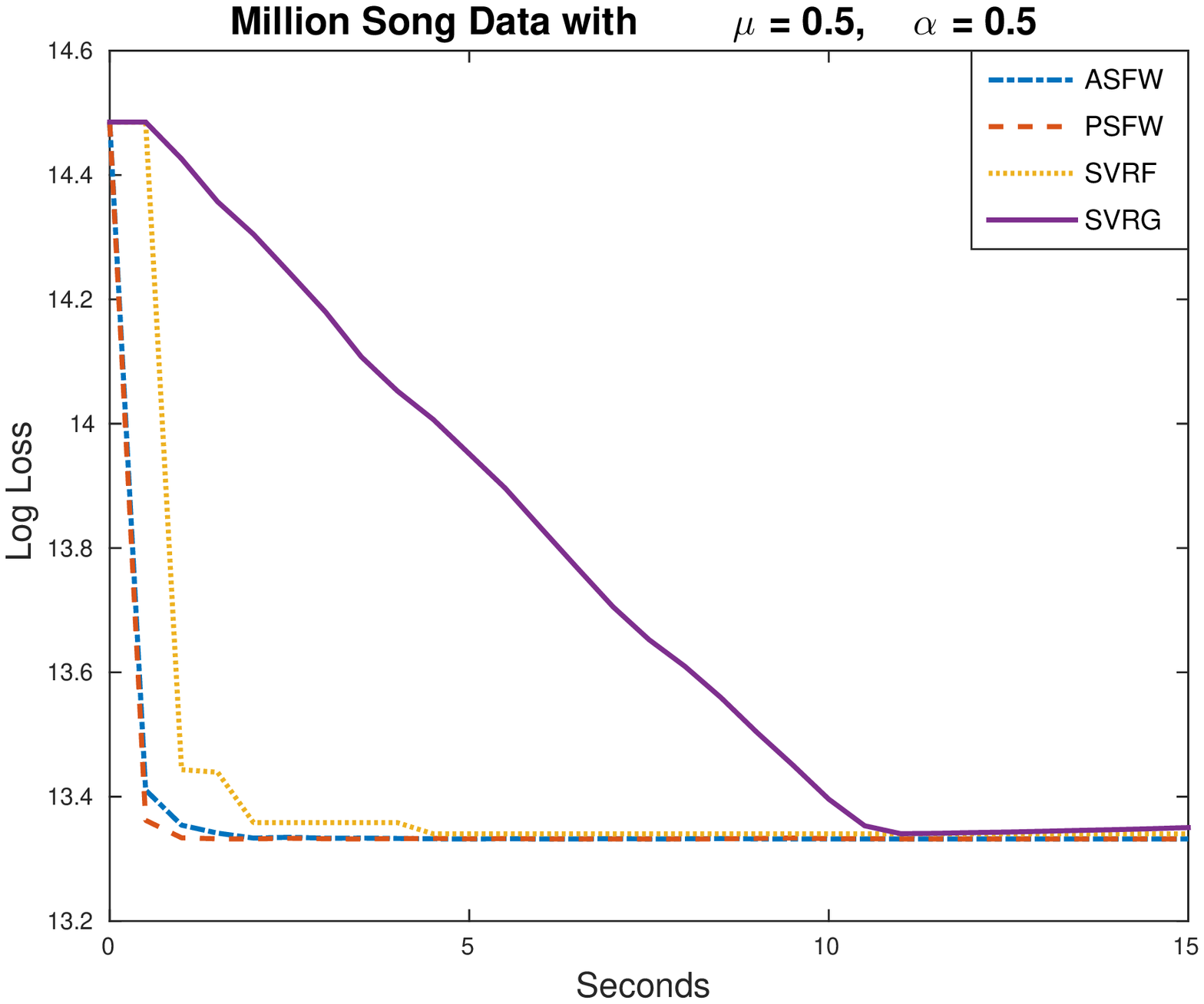}
\includegraphics[scale=0.2]{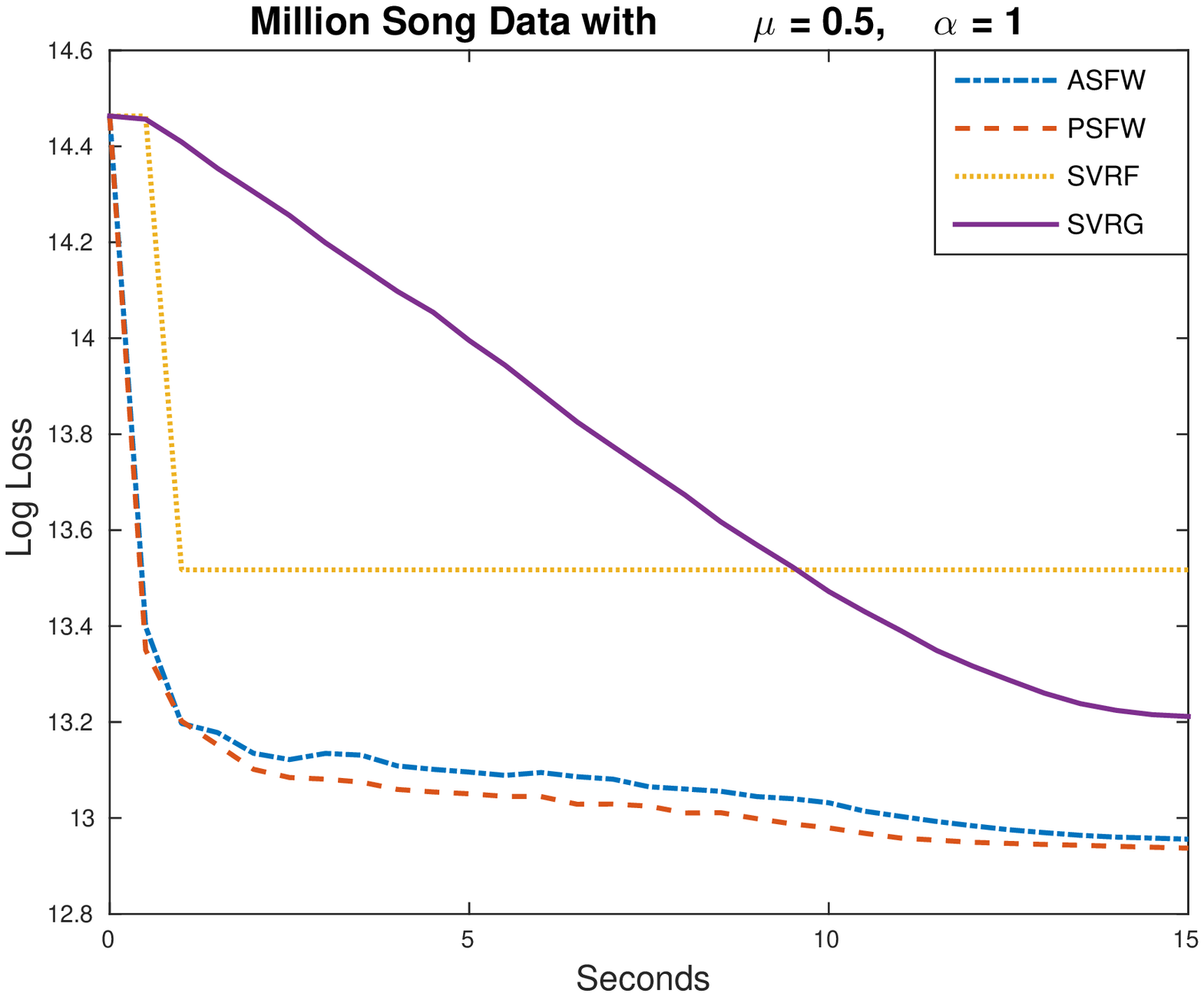}
\includegraphics[scale=0.2]{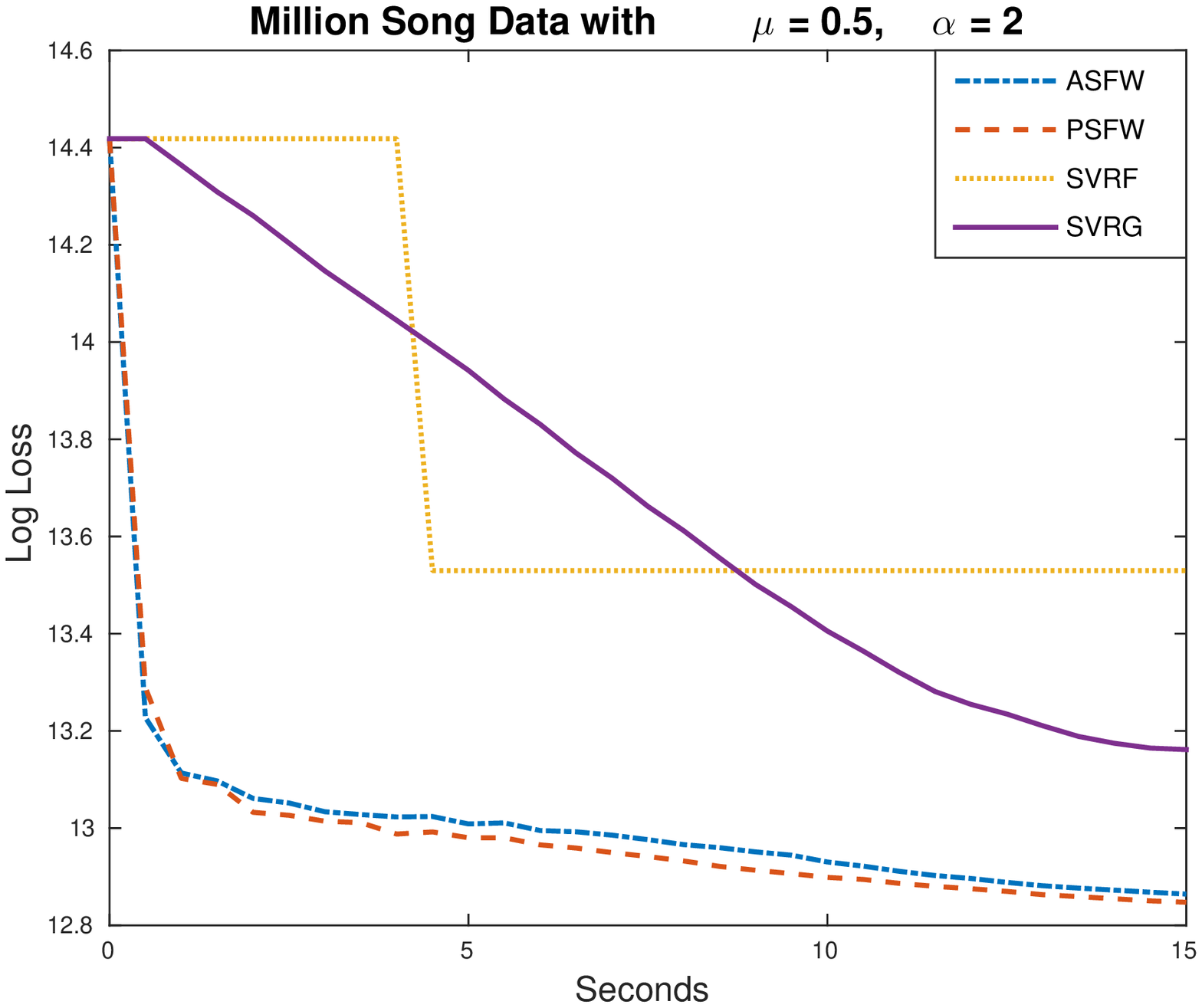}
\includegraphics[scale=0.2]{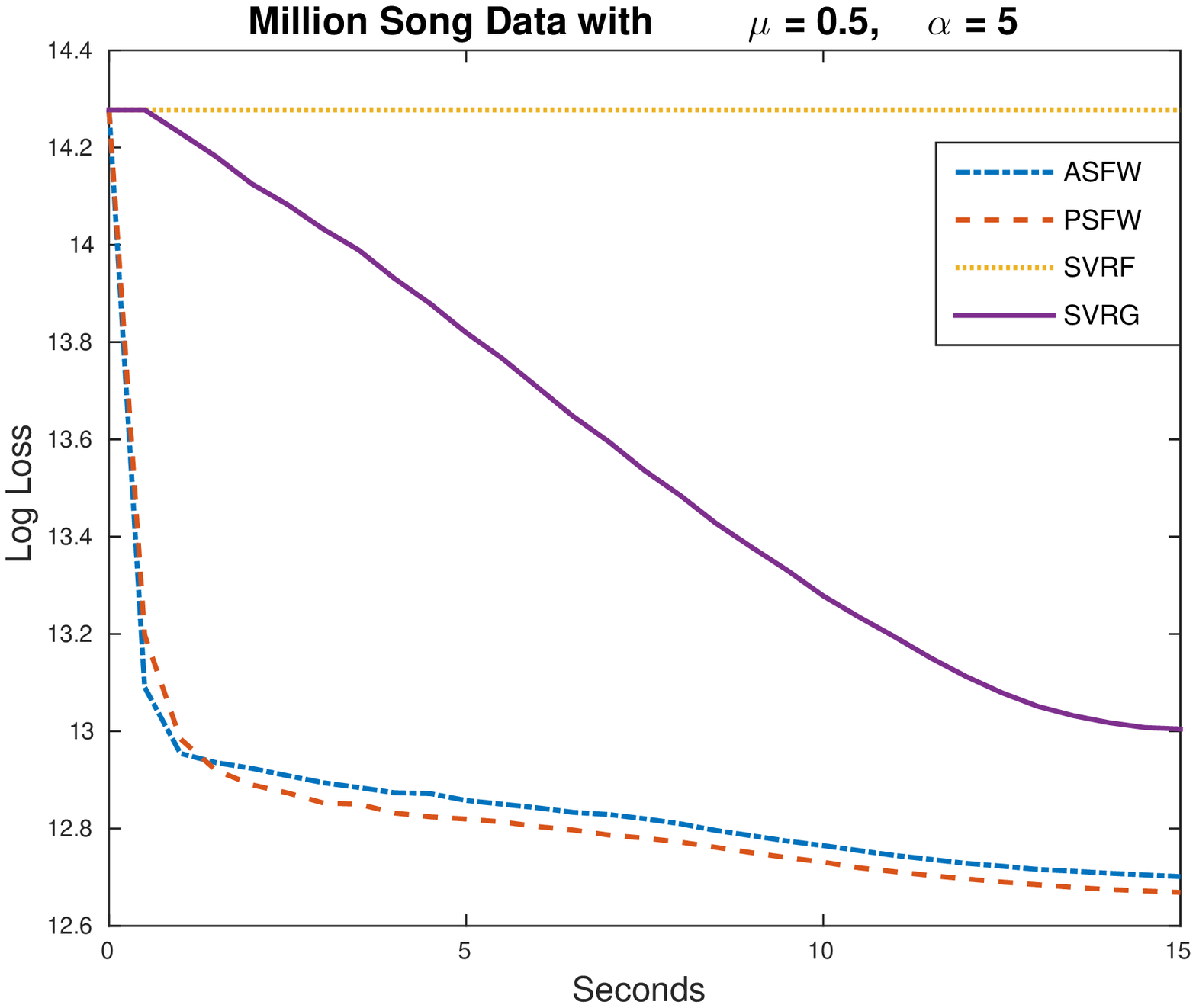}
\includegraphics[scale=0.2]{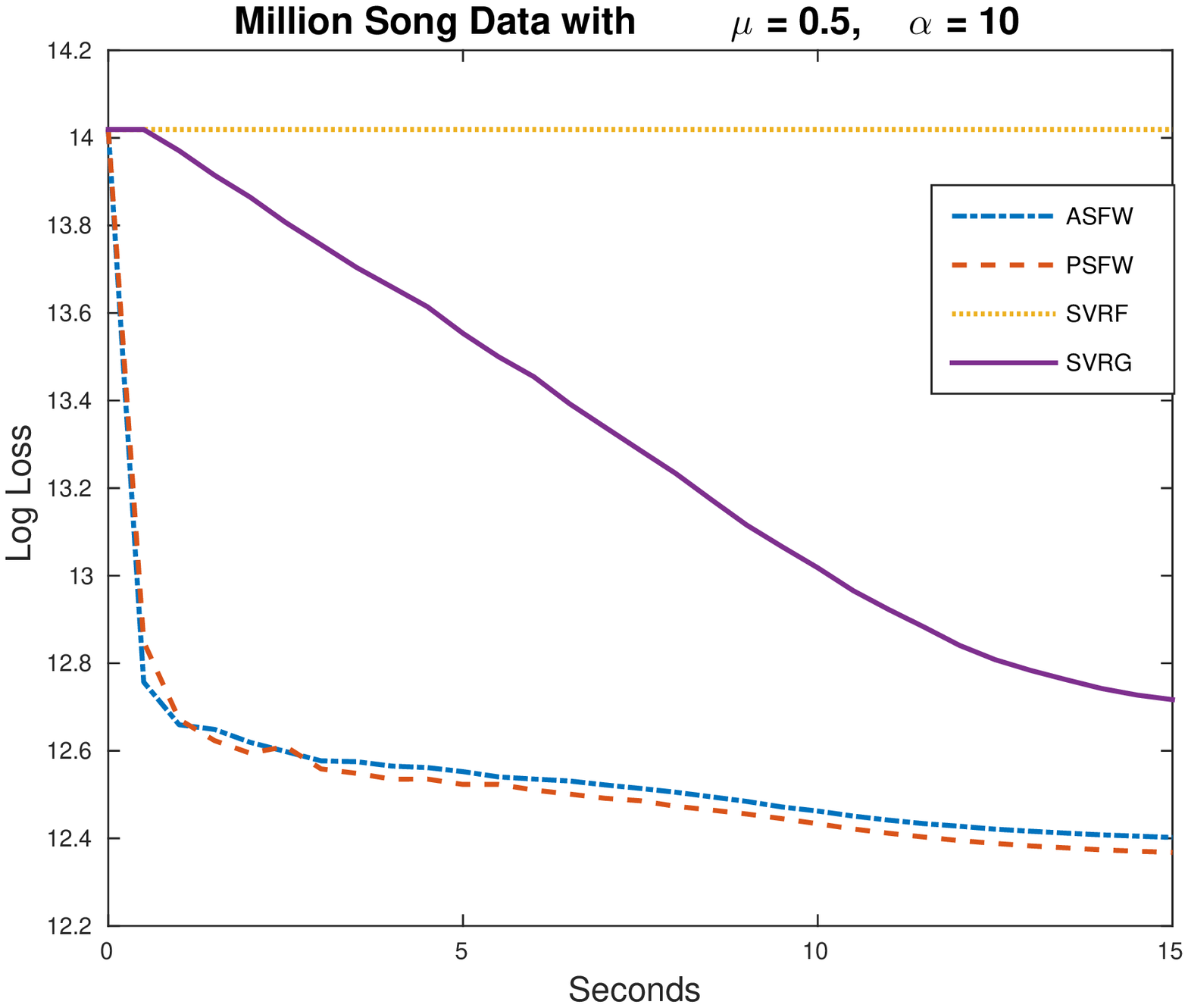}
\includegraphics[scale=0.2]{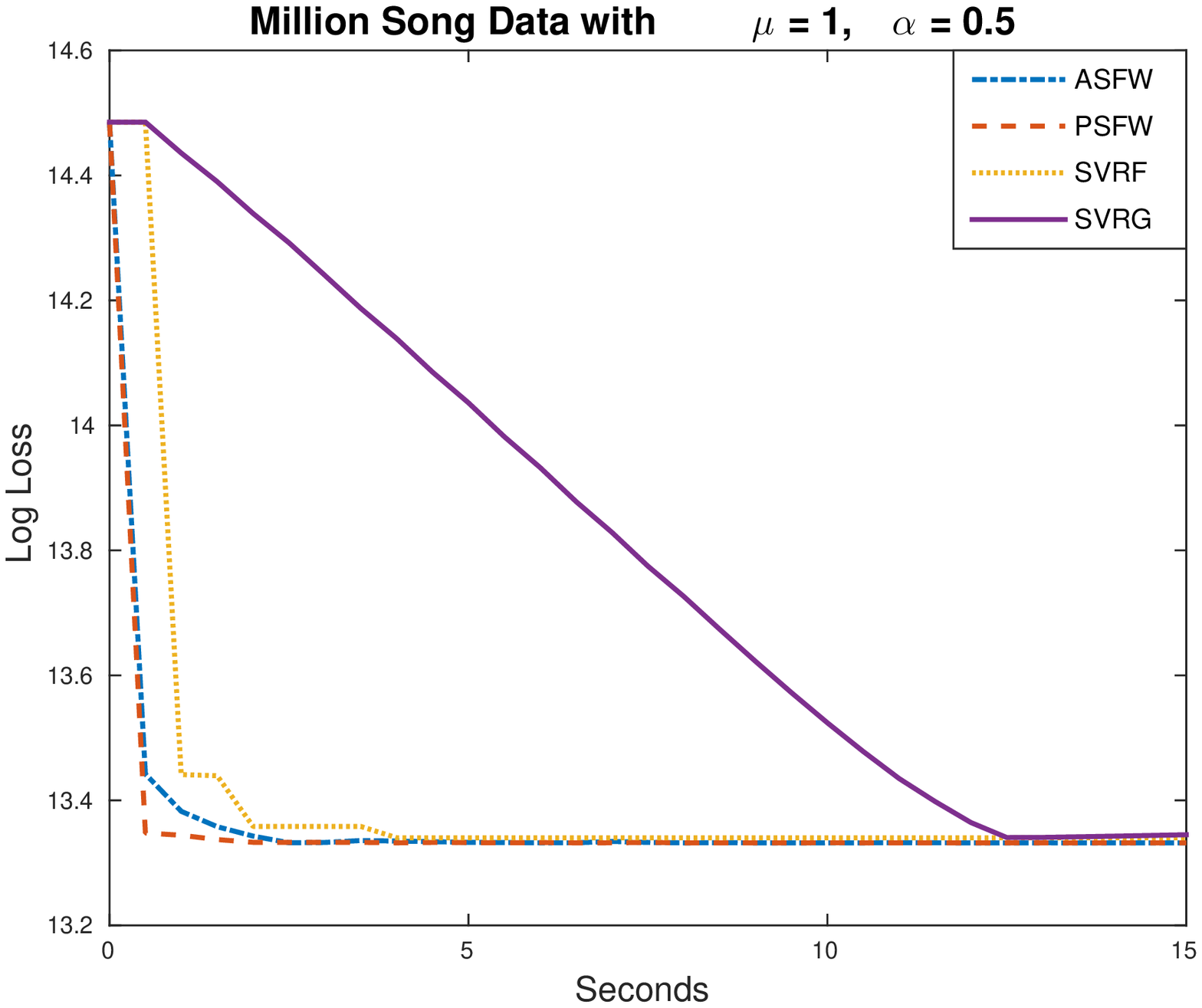}
\includegraphics[scale=0.2]{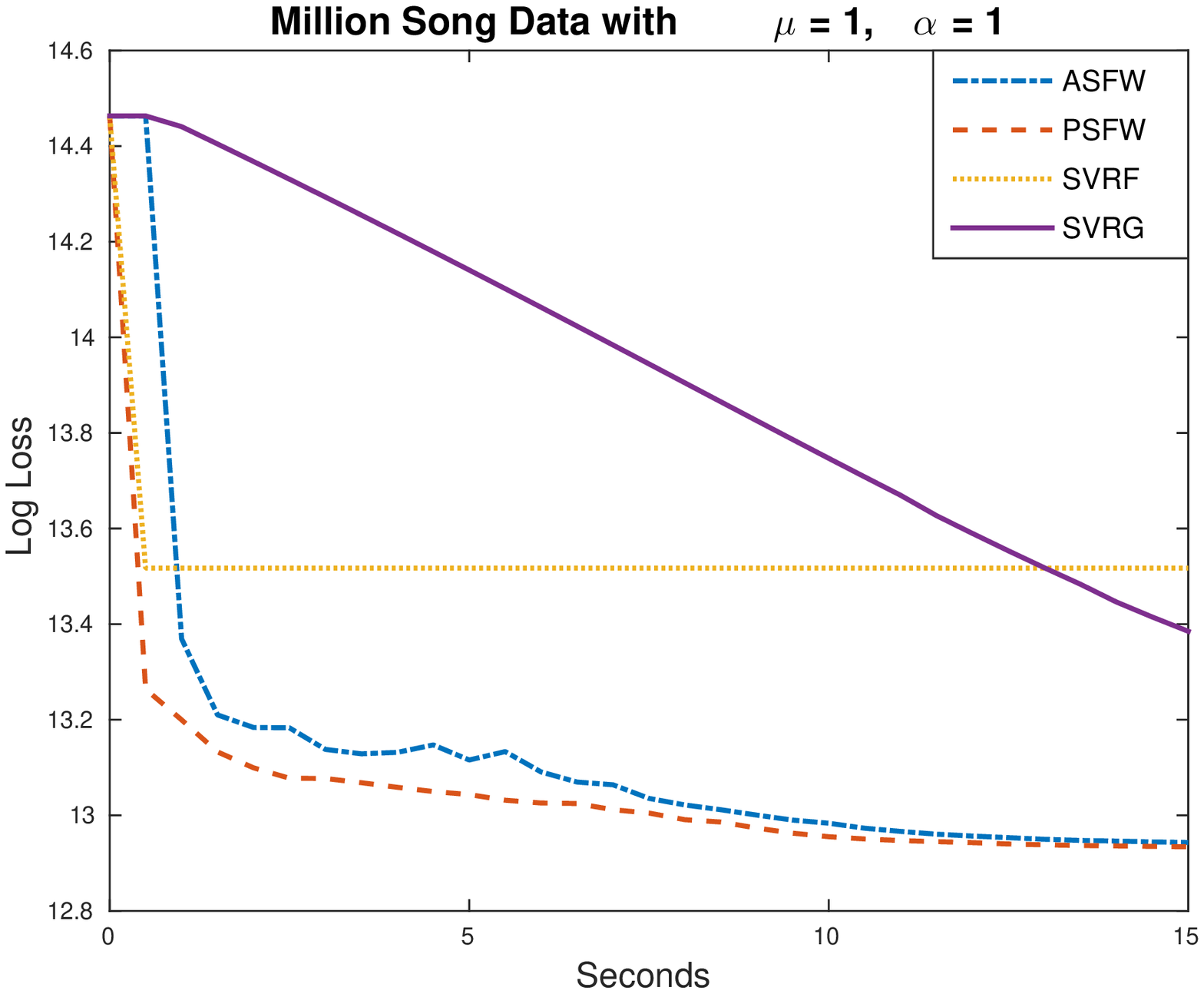}
\includegraphics[scale=0.2]{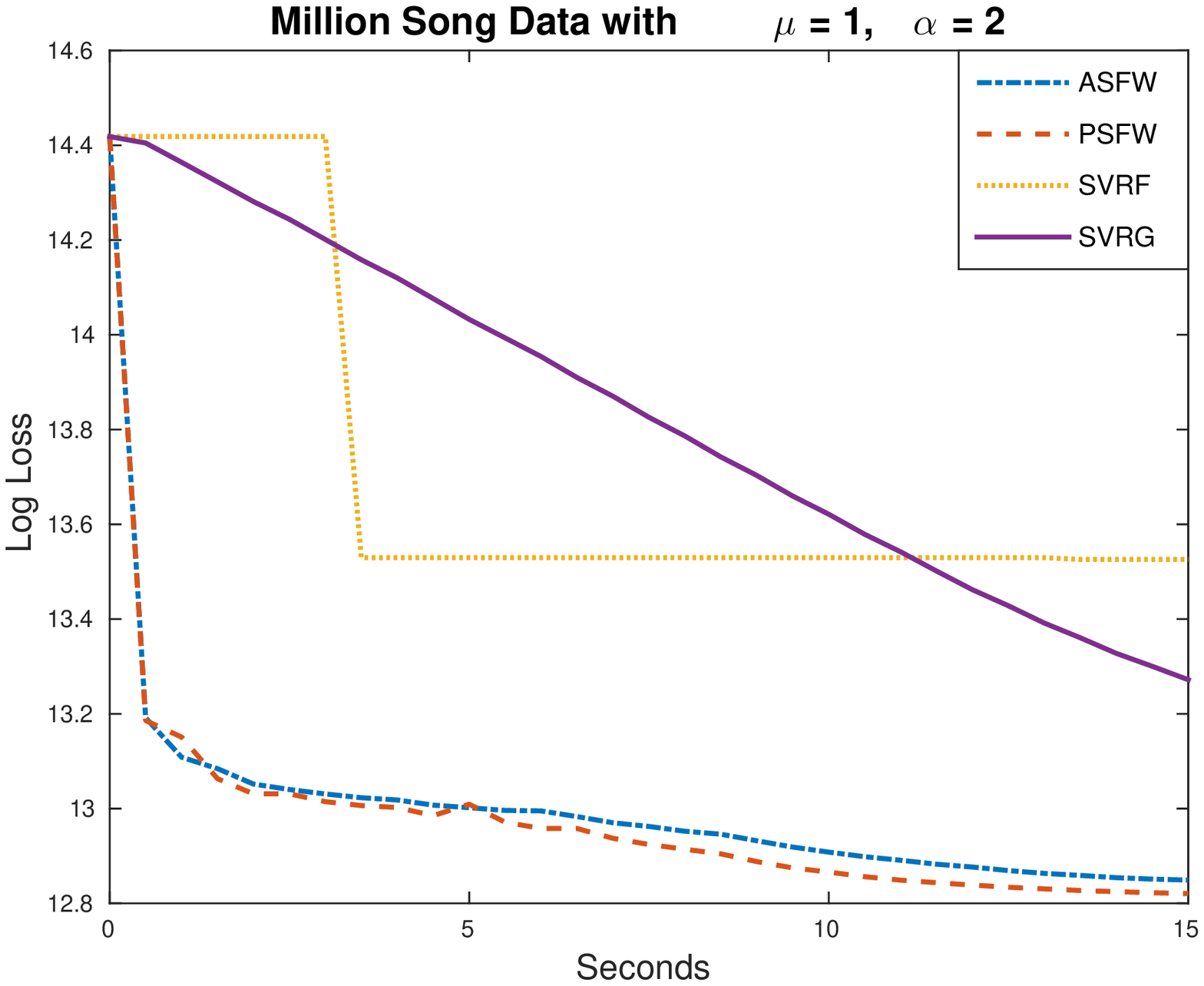}
\includegraphics[scale=0.2]{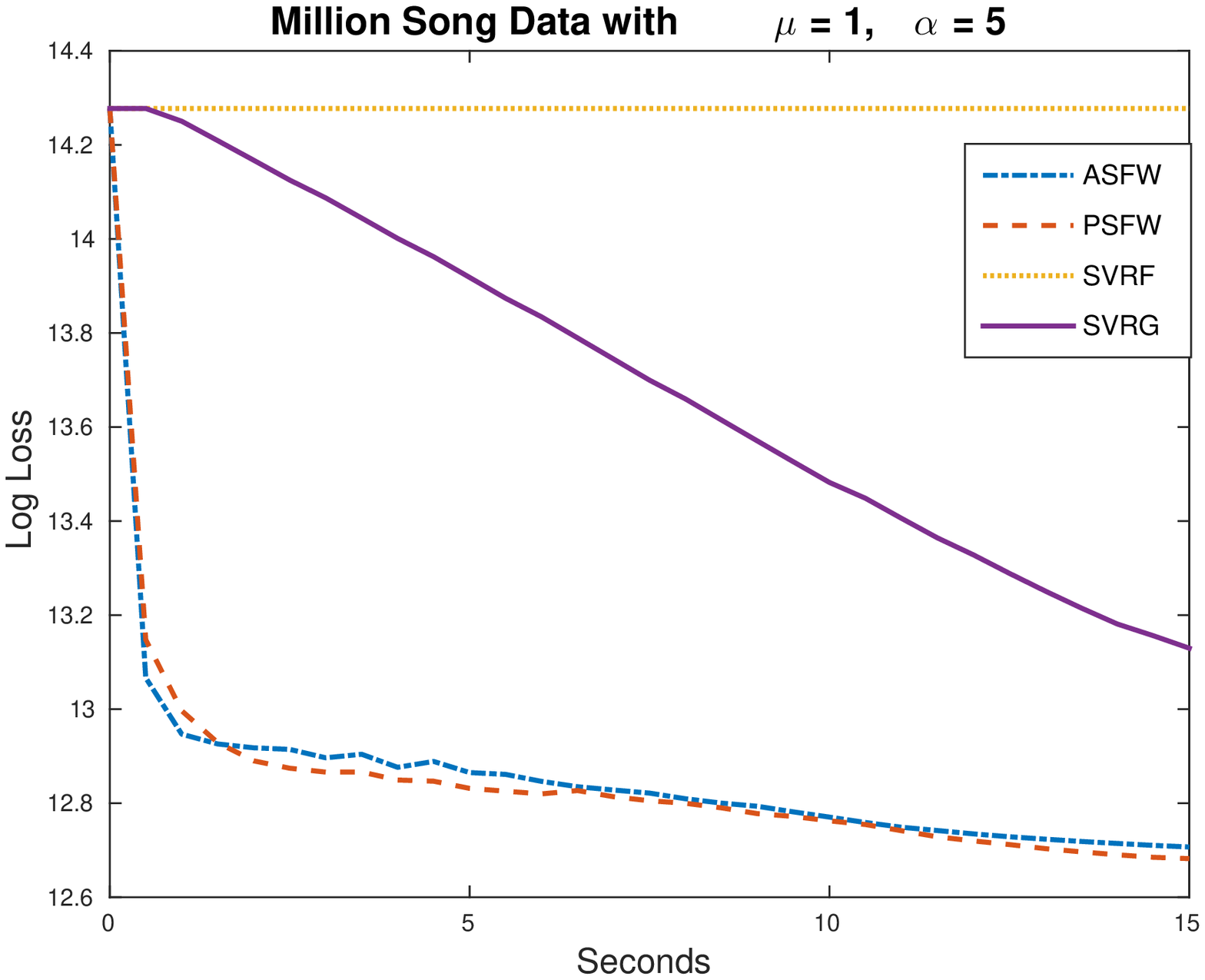}
\includegraphics[scale=0.2]{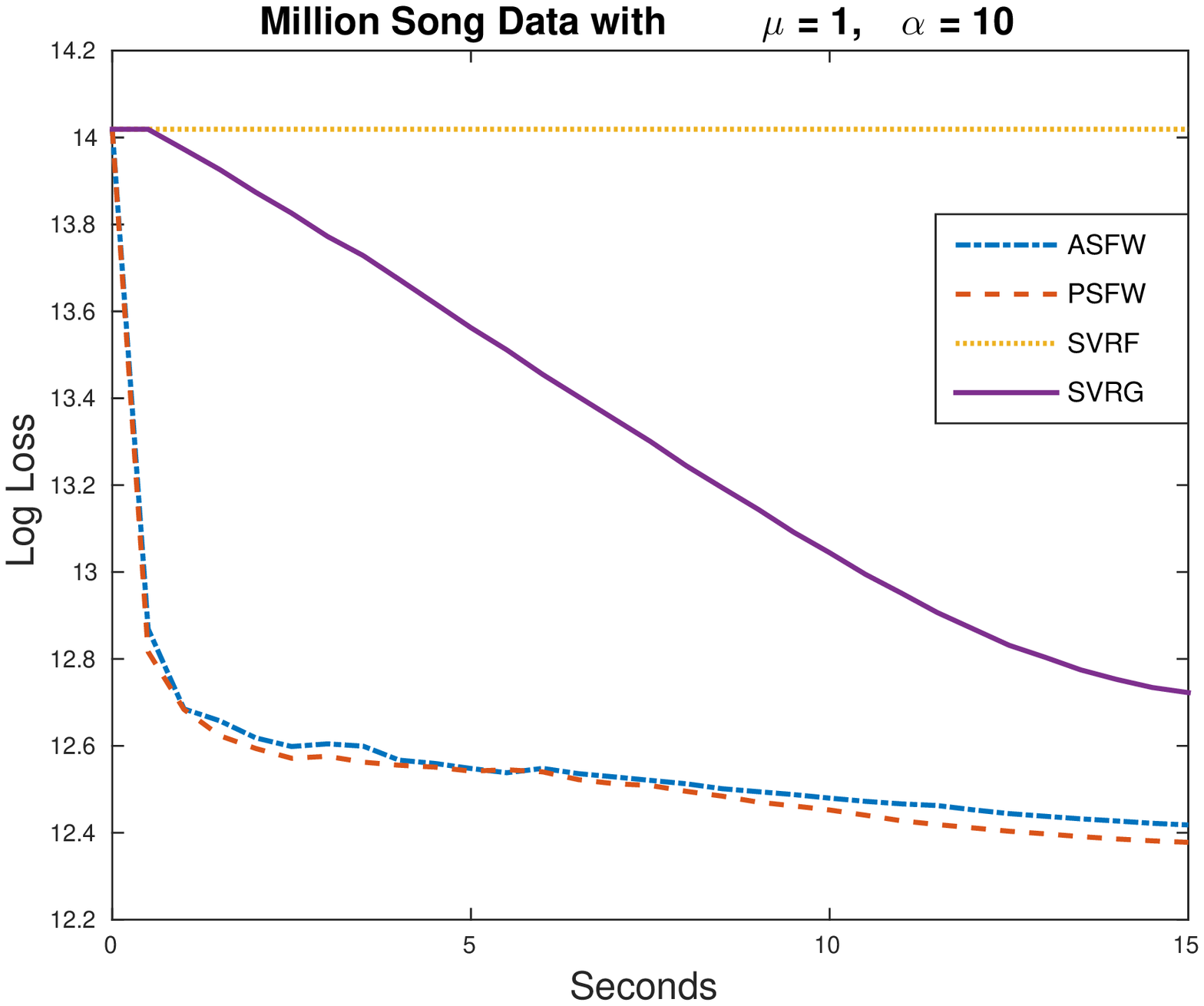}
\includegraphics[scale=0.2]{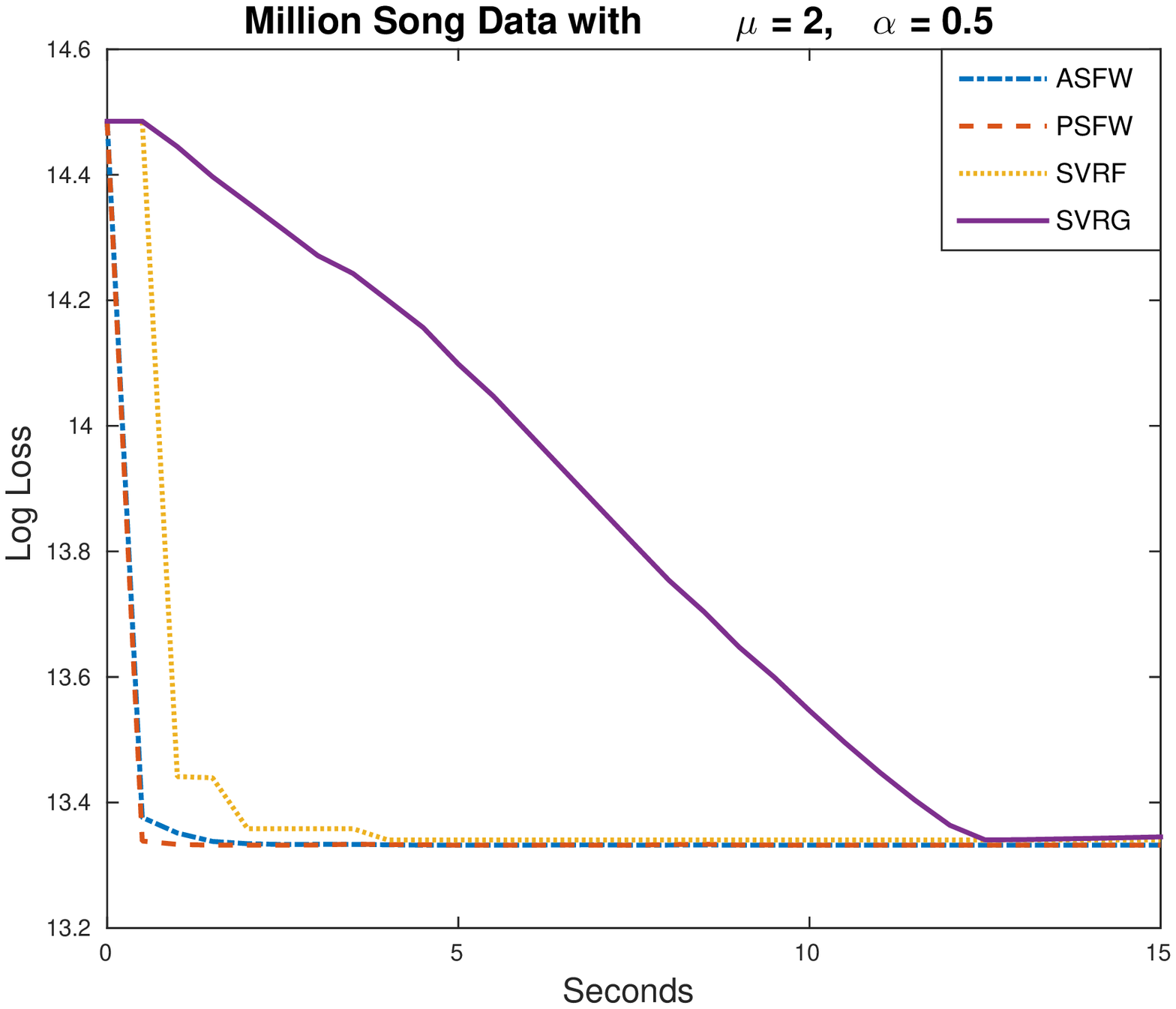}
\includegraphics[scale=0.2]{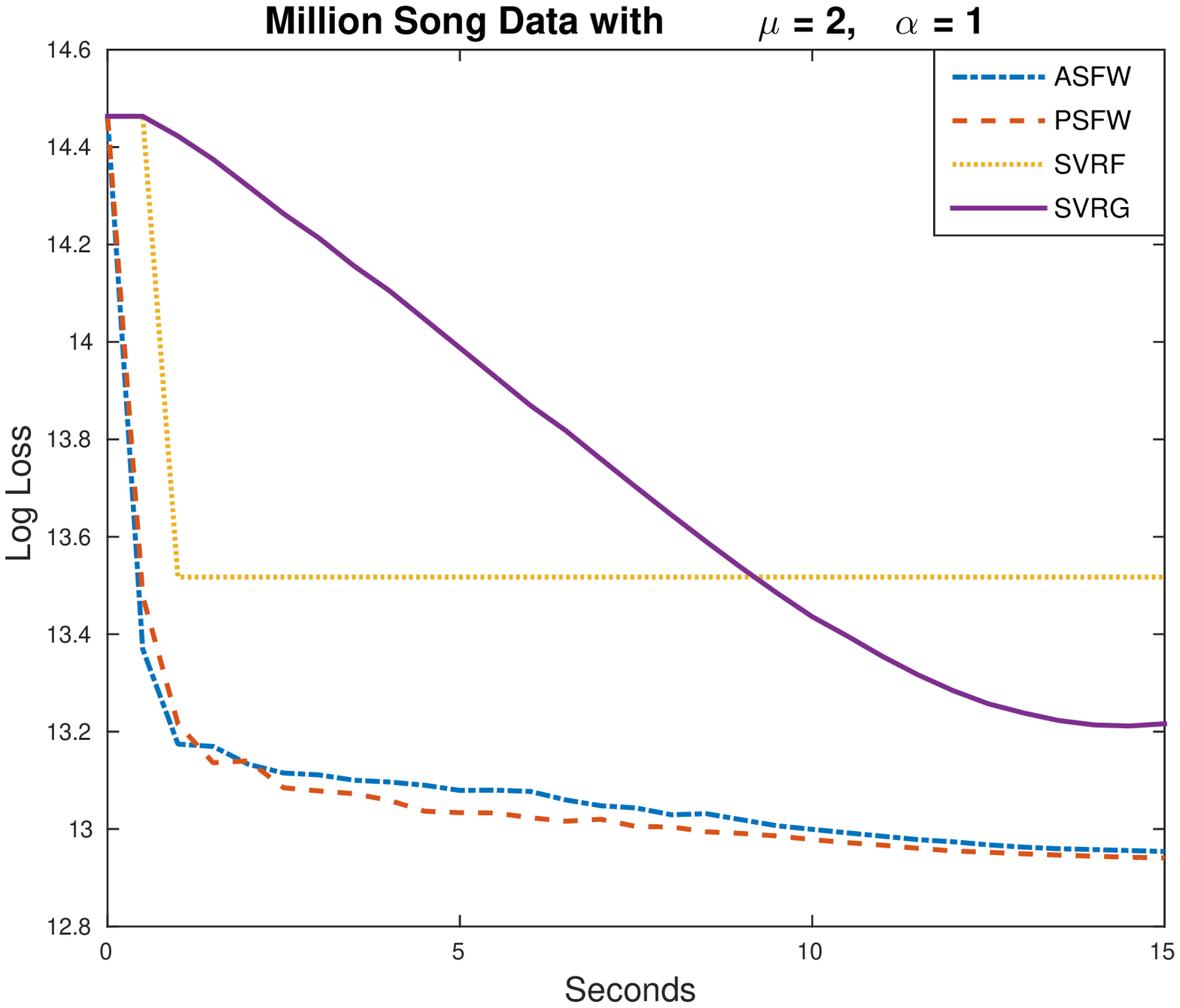}
\includegraphics[scale=0.2]{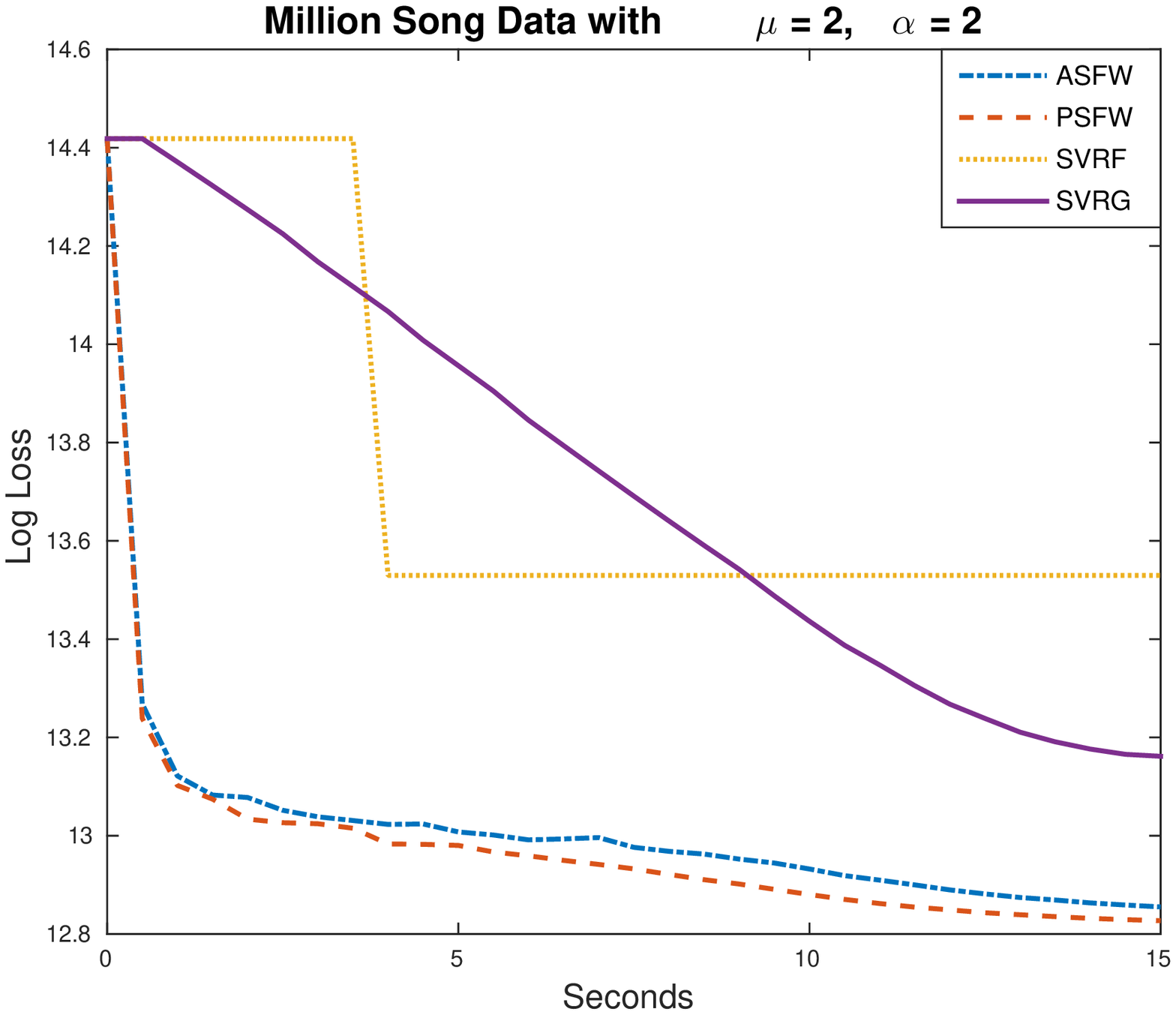}
\includegraphics[scale=0.2]{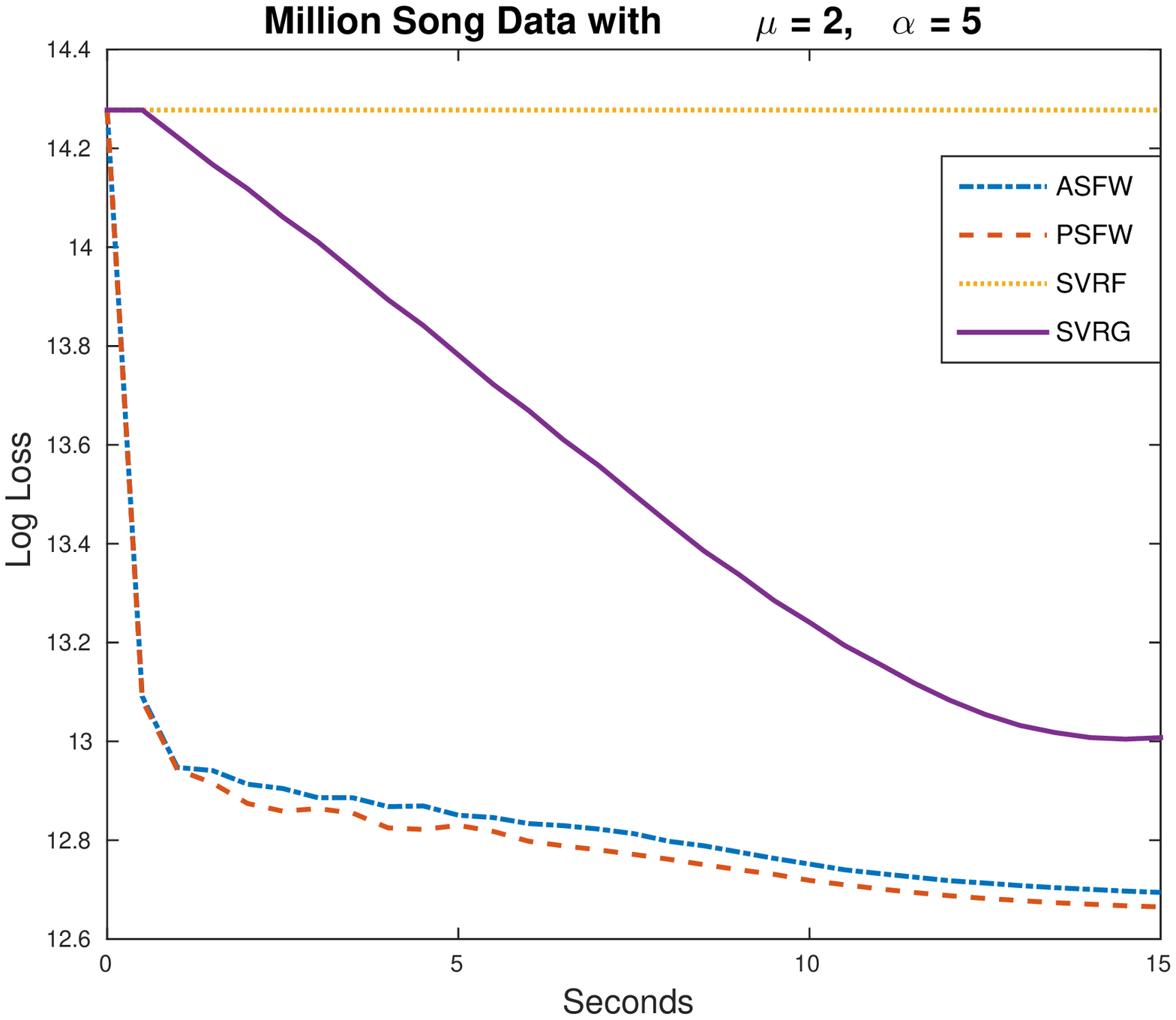}
\includegraphics[scale=0.2]{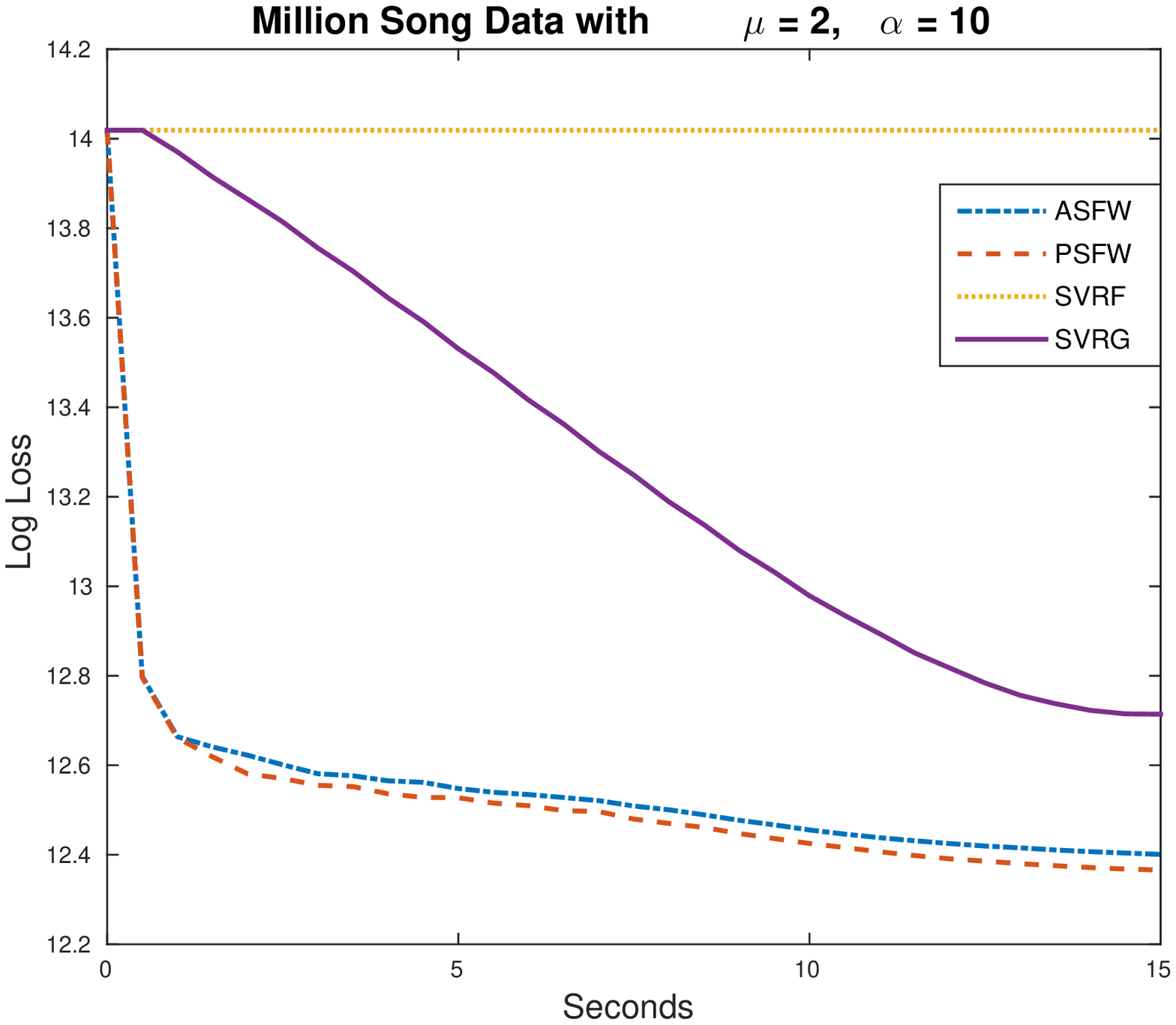}
\includegraphics[scale=0.2]{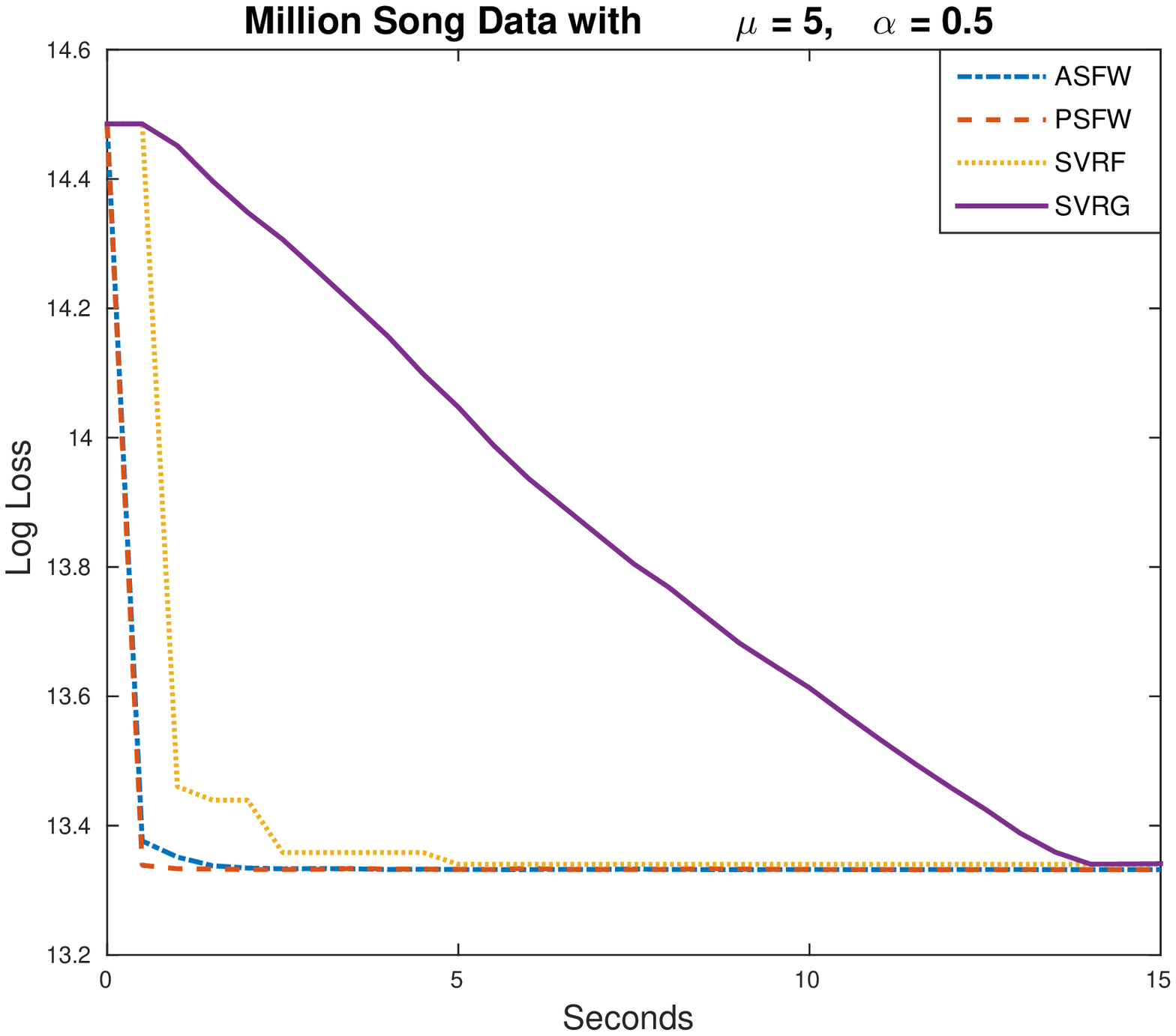}
\includegraphics[scale=0.2]{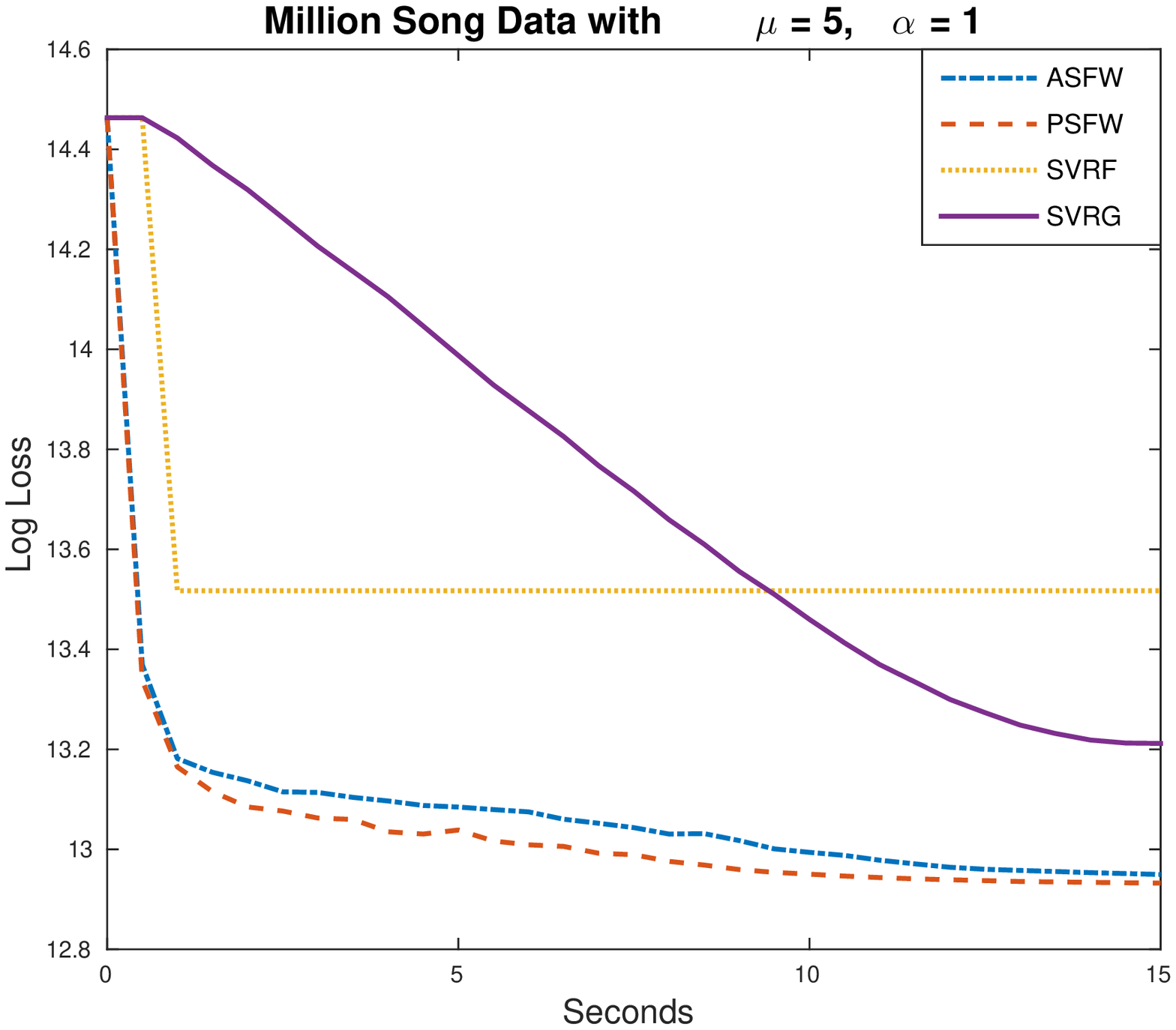}
\includegraphics[scale=0.2]{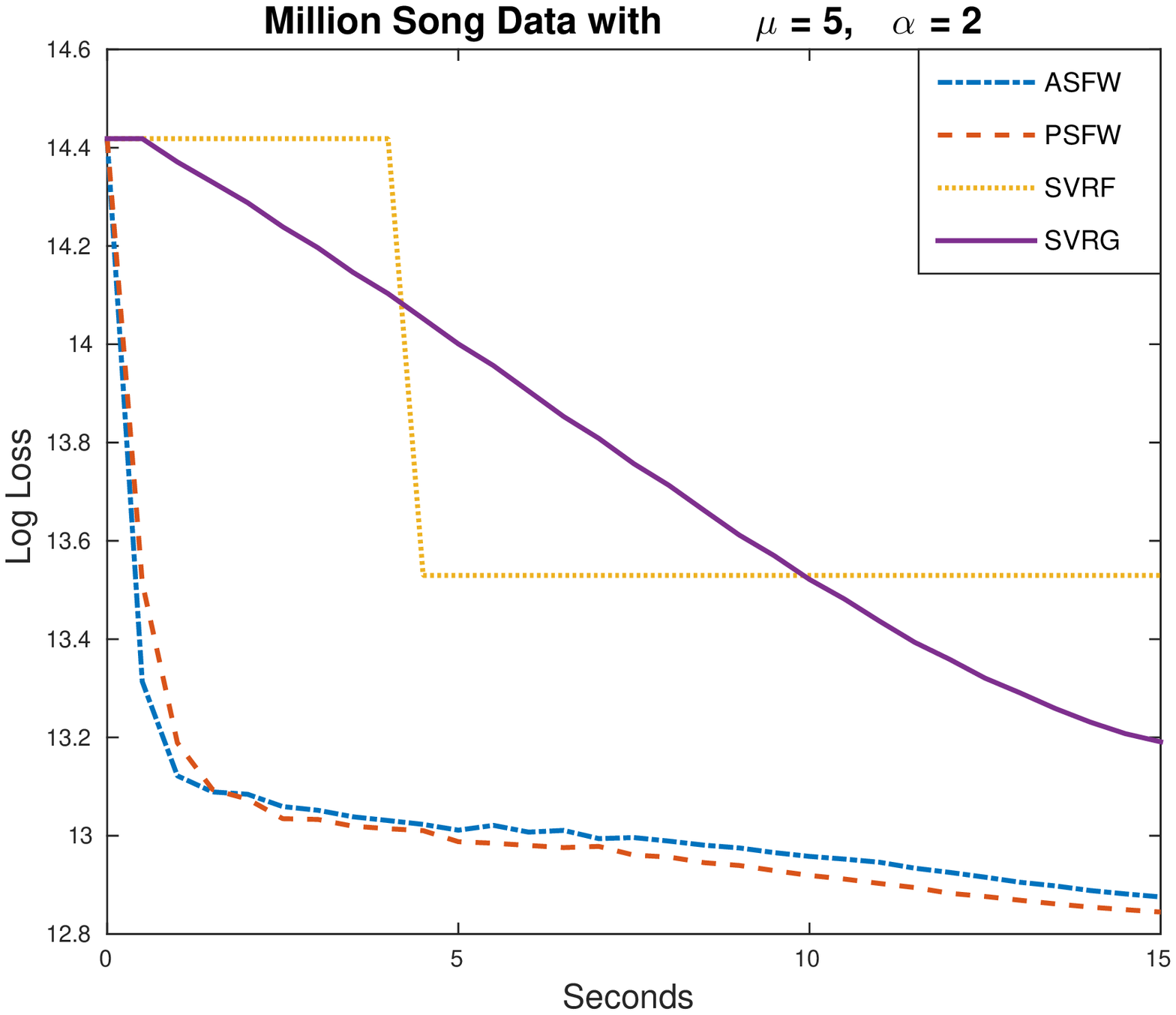}
\includegraphics[scale=0.2]{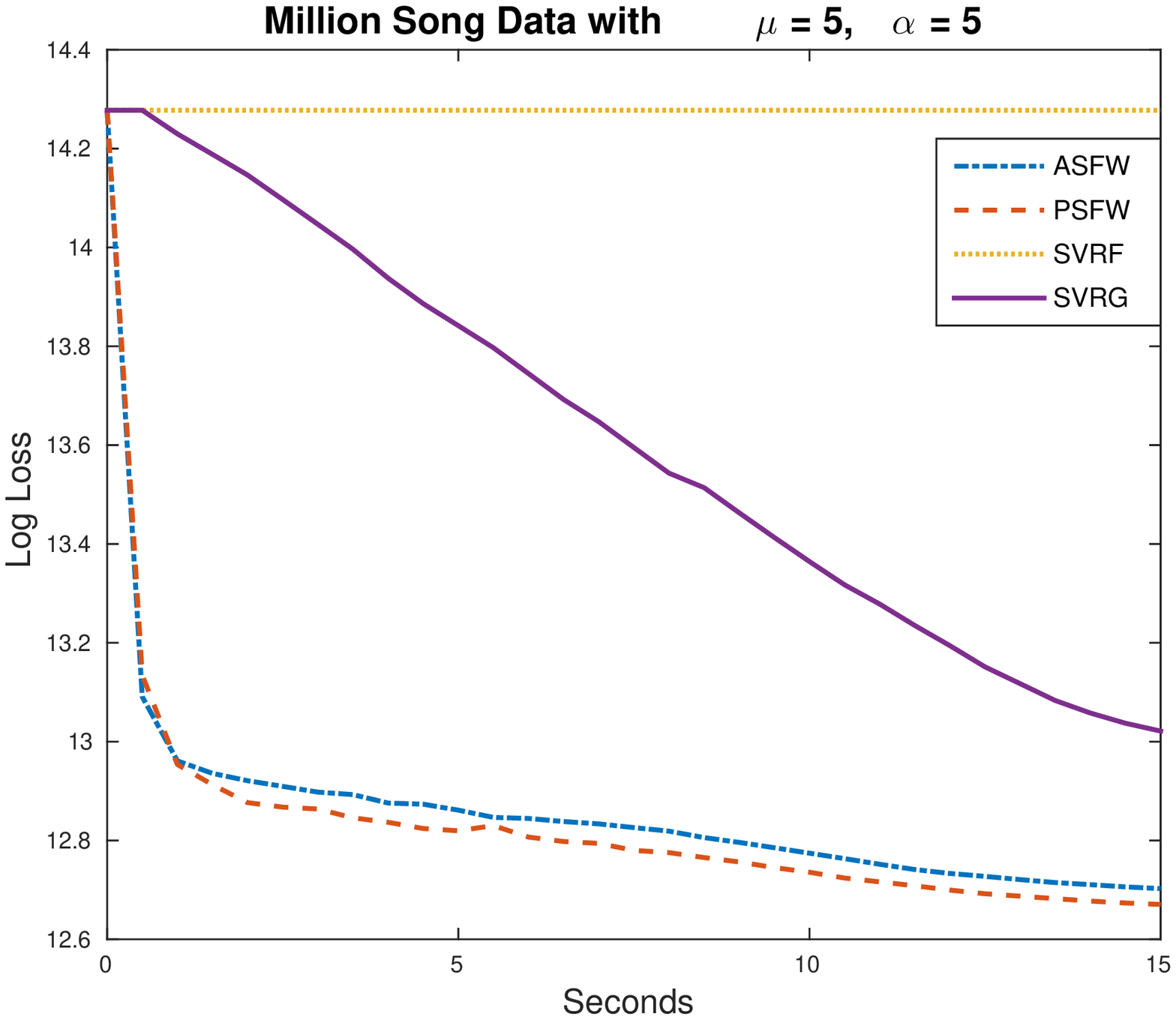}
\includegraphics[scale=0.2]{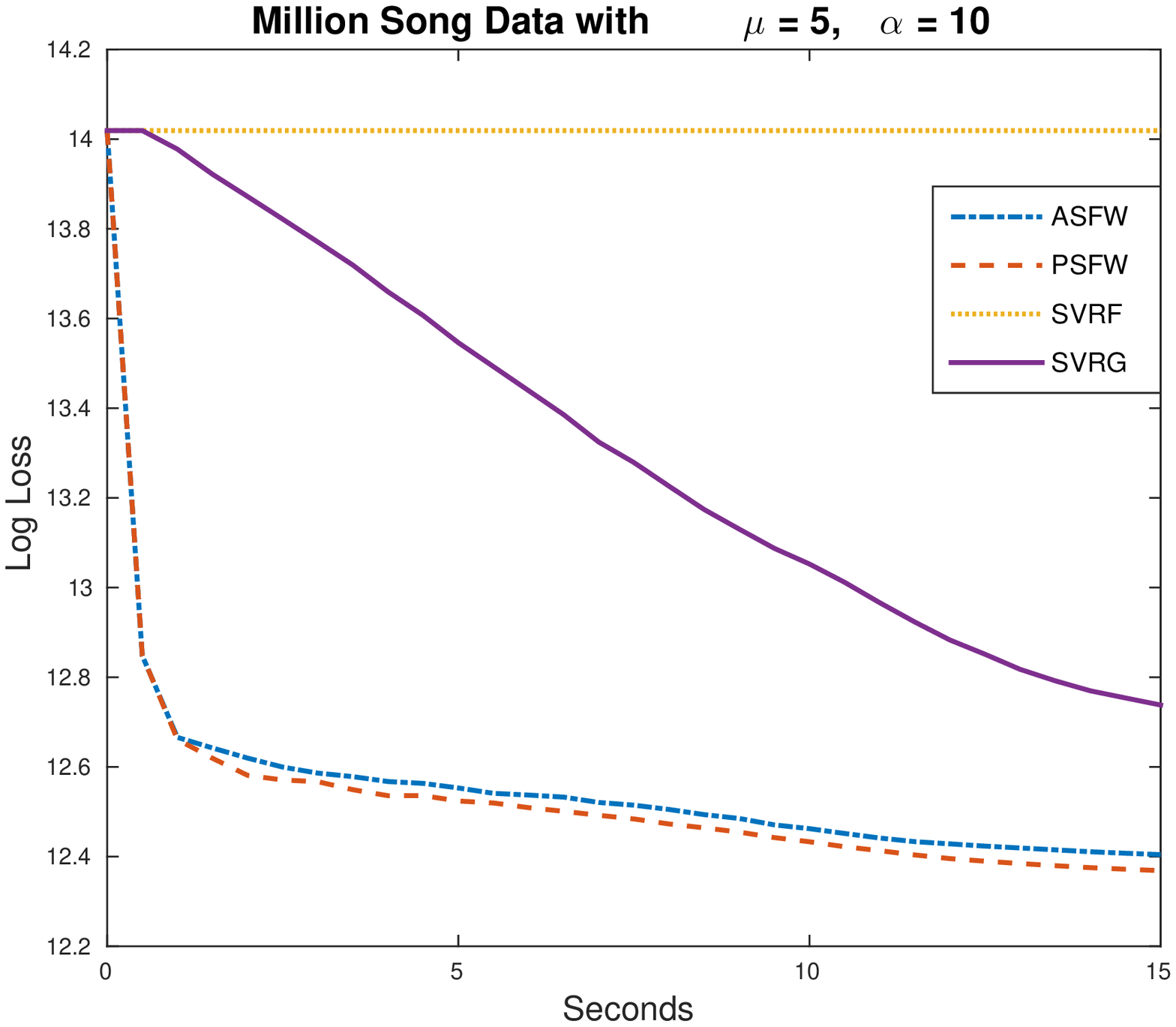}
\includegraphics[scale=0.2]{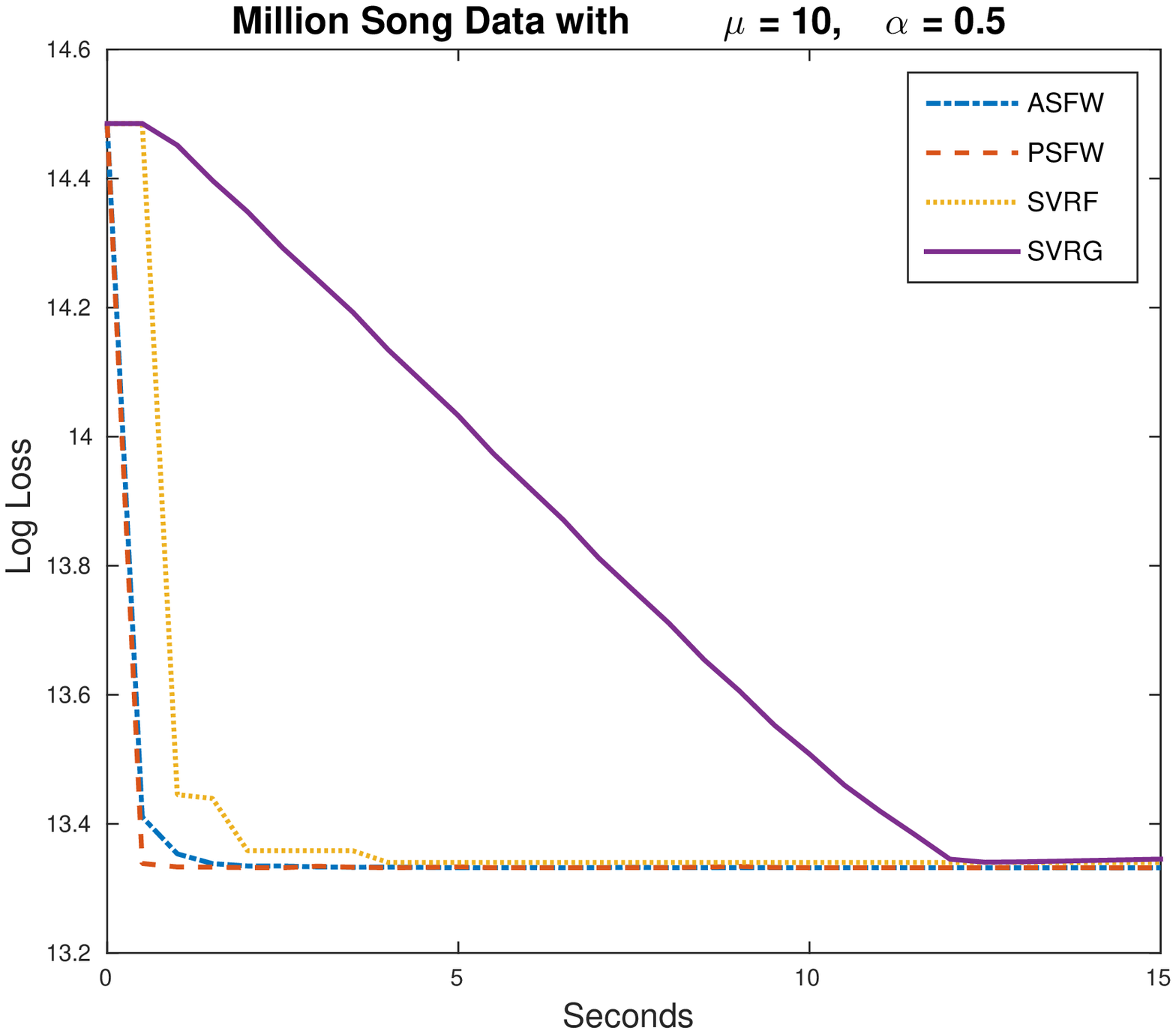}
\includegraphics[scale=0.2]{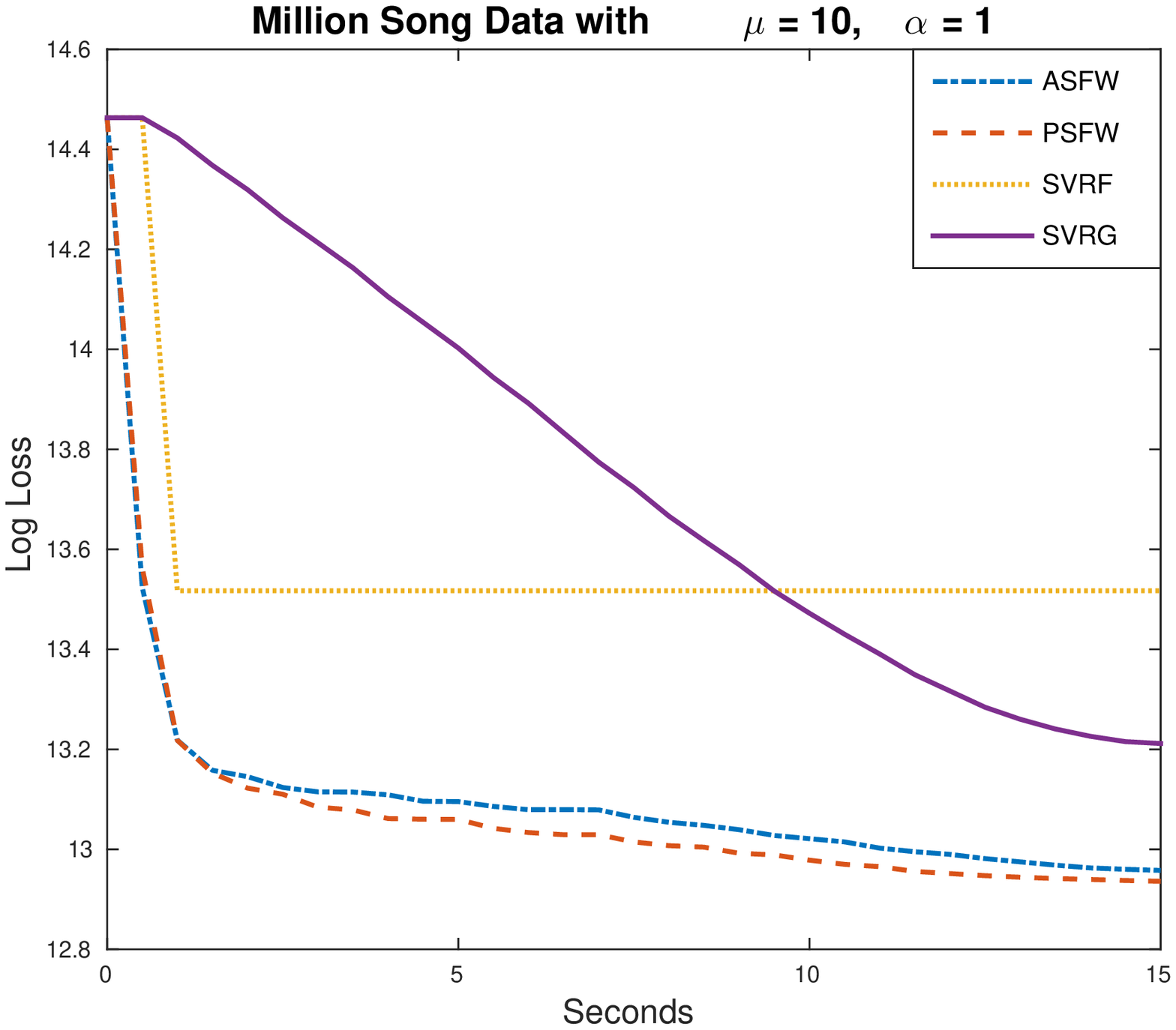}
\includegraphics[scale=0.2]{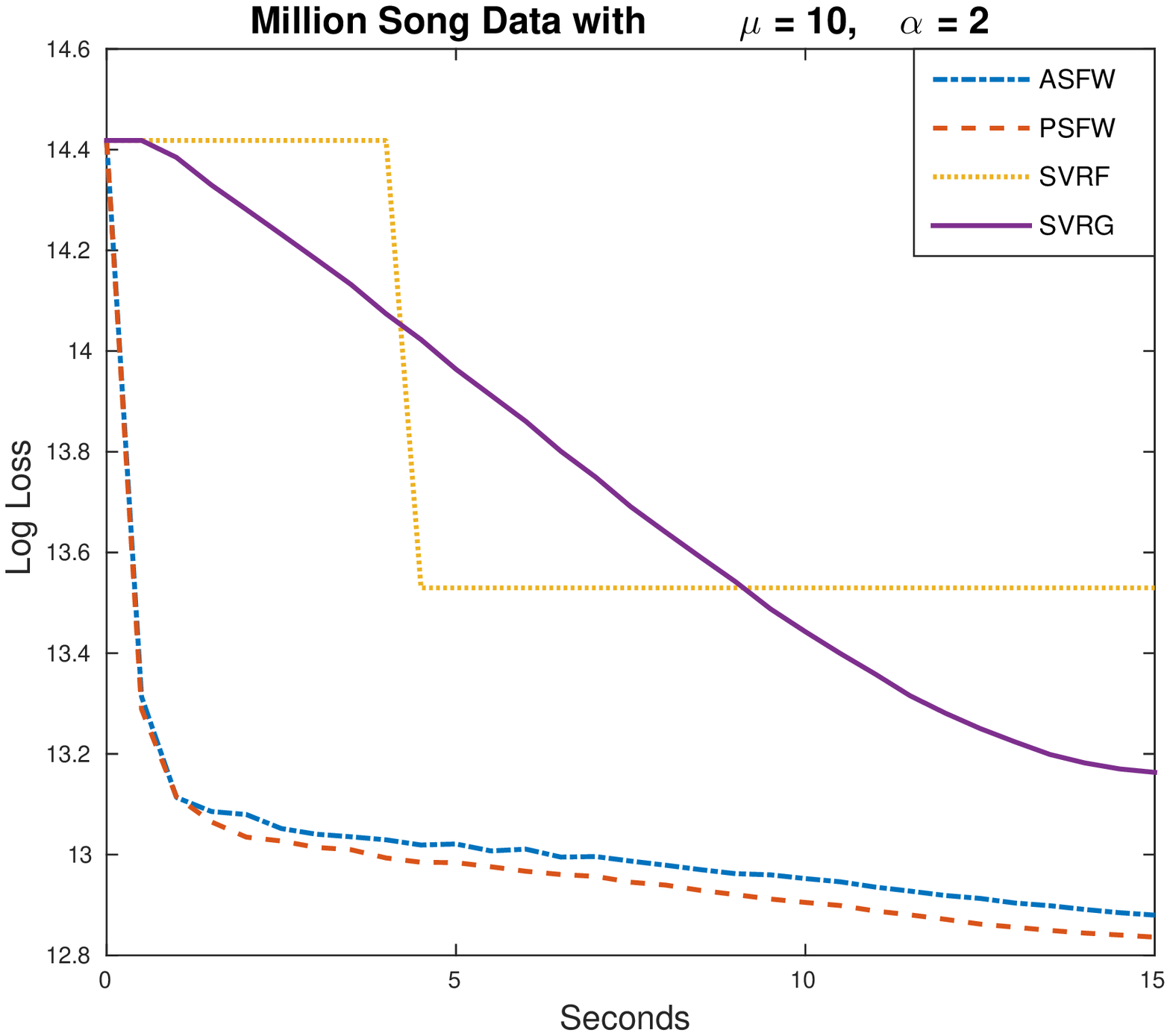}
\includegraphics[scale=0.2]{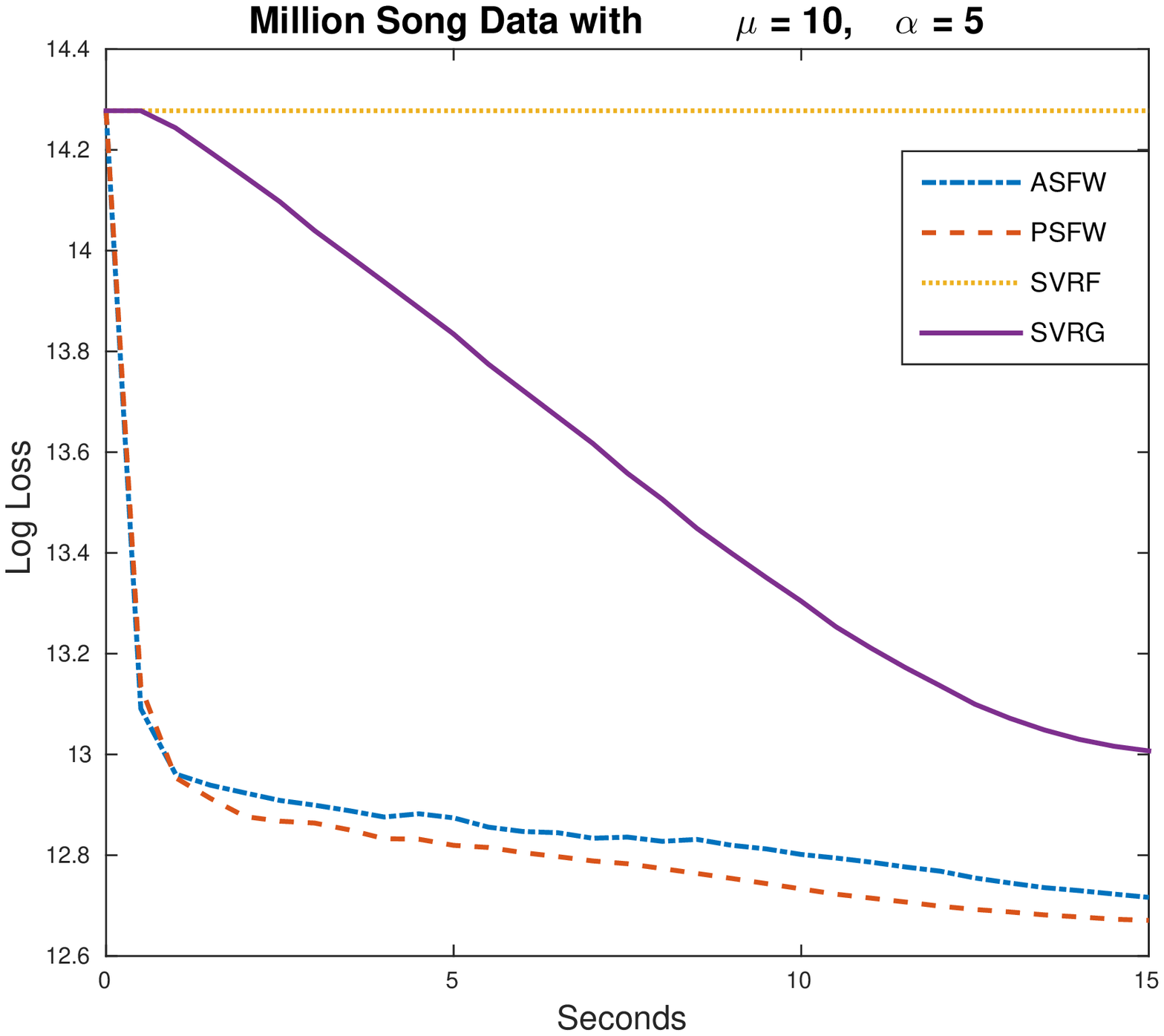}
\includegraphics[scale=0.2]{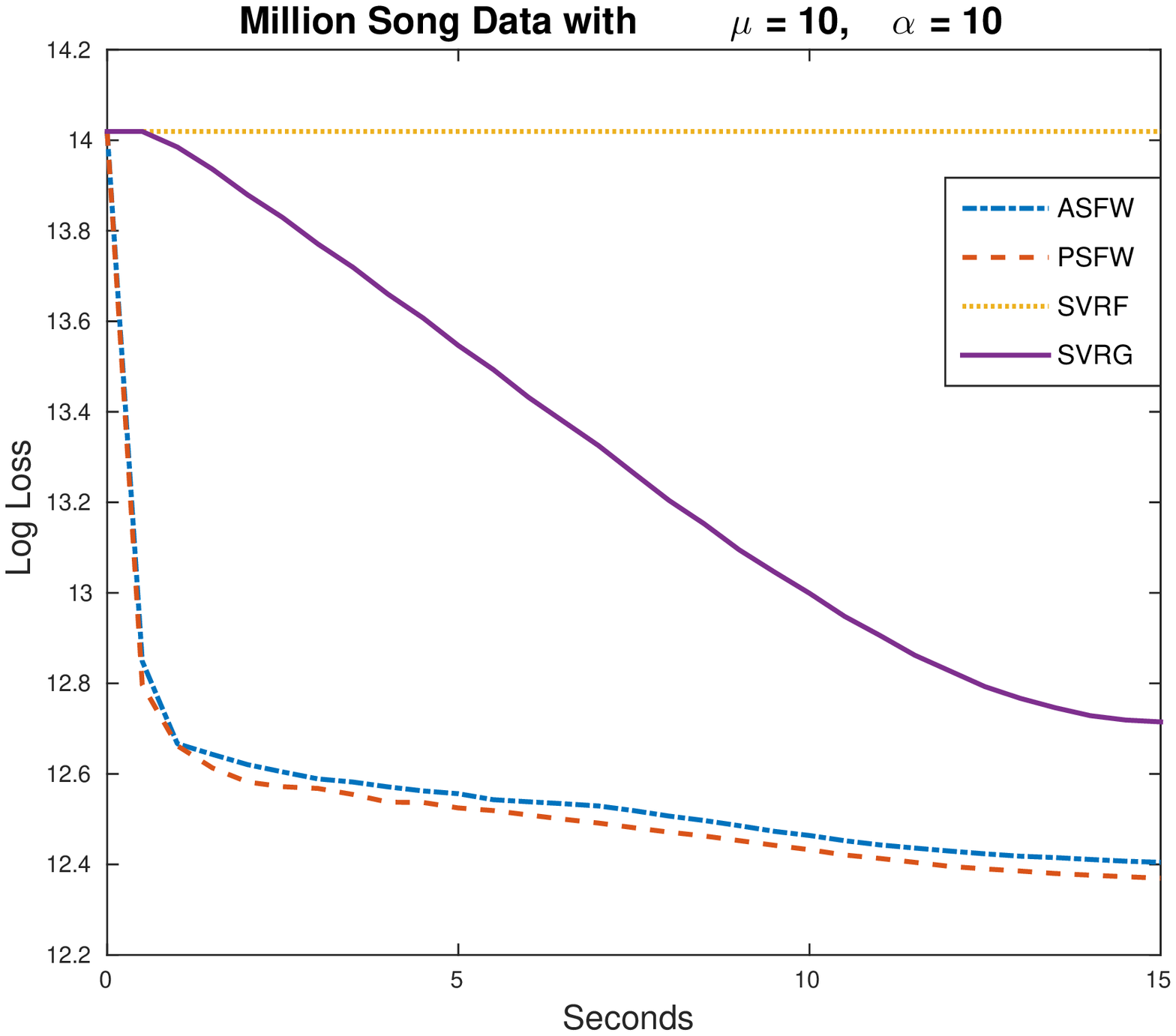}
\end{center}
\section{Conclusion and Future Work}
In this paper, we proved linear convergence almost surely and in expectation of the Away-step Stochastic Frank-Wolfe algorithm and the Pairwise Stochastic Frank-Wolfe algorithm by using a novel proof technique. We tested these algorithms by training a least squares model with elastic-net regularization on the million song dataset and on a synthetic problem. The proposed algorithms performed as well as or better than their stochastic competitors for various choice of the regularization parameters. Future work includes extending the proposed algorithms to problems with block-coordinate structures and non-strongly convex objective functions and using variance reduced stochastic gradients to reduce the number of stochastic gradient oracle calls.
\bibliographystyle{unsrt}
\bibliography{fw_write_up.bib}
\end{document}